\numberwithin{equation}{section}
\newtheorem{theorem}{Theorem}[section]
\newtheorem{corollary}[theorem]{Corollary}
\newtheorem{conjecture}[theorem]{Conjecture}
\newtheorem{lemma}[theorem]{Lemma}
\newtheorem{problem}[theorem]{Problem}
\theoremstyle{definition}
\DeclareMathOperator{\Skel}{Skel}
\DeclareMathOperator\lk{\mathrm{lk}}
\DeclareMathOperator{\conv}{\mathrm{conv}}
\DeclareMathOperator{\aff}{\mathrm{aff}}
\DeclareMathOperator{\Hilb}{\mathrm{Hilb}}
\DeclareMathOperator{\Rig}{Rig}
\DeclareMathOperator{\rank}{rank}
\newcommand{\field}{{\mathbb K}}
\newcommand{\N}{{\mathbb N}}
\newcommand{\R}{{\mathbb R}}
\newcommand{\Z}{{\mathbb Z}}
\newcommand{\C}{{\mathcal C}}
\newcommand{\T}{{\mathcal T}}
\newcommand{\B}{{\mathcal B}}
\newcommand{\M}{{\mathcal M}}
\newcommand{\Stress}{{\mathcal S}}
\newcommand{\p}{\rho}
\newcommand{\m}{\mu}
\newcommand{\fmax}{\ensuremath{\mathrm{fmax}}\hspace{1pt}}
\newcommand{\Int}{\mbox{\upshape int}\,}
\newcommand{\vol}{\mbox{\upshape Vol}\,}
\title{A tale of centrally symmetric polytopes and spheres}
\author{Isabella Novik\thanks{Research is partially\textsl{}
supported by NSF grants DMS-1361423 and DMS-1664865, and by Robert R.~\&  Elaine F.~Phelps Professorship in Mathematics. This material is based on work supported by the National Science Foundation under Grant No.~DMS-1440140 while the author was in residence at the Mathematical Sciences Research Institute in Berkeley CA, during the Fall 2017
semester.
}\\
\small Department of Mathematics\\[-0.8ex]
\small University of Washington\\[-0.8ex]
\small Seattle, WA 98195-4350, USA\\[-0.8ex]
\small \texttt{novik@math.washington.edu}
}
\begin{document}
\maketitle

\begin{abstract} This paper is a survey of recent advances as well as open problems in the study of face numbers of centrally symmetric simplicial polytopes and spheres. The topics discussed range from neighborliness of centrally symmetric polytopes and the upper bound theorem for centrally symmetric simplicial spheres to the generalized lower bound theorem for centrally symmetric simplicial polytopes and the lower bound conjecture for centrally symmetric simplicial spheres and manifolds.
\end{abstract}

\section{Introduction}
The goal of this paper is to survey recent results related to face numbers  of centrally symmetric simplicial polytopes and spheres. To put things into perspective, we start by discussing simplicial polytopes and spheres without a symmetry assumption. The classical theorems of Barnette \cite{Barnette73} and McMullen \cite{McMullen70}, known as the Lower Bound Theorem (LBT, for short) and the Upper Bound Theorem  (or UBT), assert that among all $d$-dimensional simplicial polytopes with $n$ vertices a stacked polytope simultaneously minimizes all the face numbers while the cyclic polytope simultaneously maximizes all the face numbers. Furthermore, the same results continue to hold in the generality of $(d-1)$-dimensional simplicial spheres (see \cite{Barnette-LBT-pseudomanifolds} and \cite{Stanley75}). It is also worth mentioning that in the class of simplicial spheres of dimension $d-1\geq 3$ with $n$ vertices, the (boundary complexes of the) stacked polytopes are the only minimizers \cite{Kalai87}. On the other hand, the maximizers are precisely the $\lfloor d/2\rfloor$-neighborly spheres --- a set that, in addition to the (boundary complex of the) cyclic polytope, includes many other simplicial spheres, see, for instance, a recent paper \cite{Padrol-13}.  

In fact, much more is known: the celebrated $g$-theorem of Billera--Lee \cite{BilleraLee} and Stanley \cite{Stanley80} provides a complete characterization of all possible $f$-vectors of simplicial polytopes. The equally celebrated $g$-conjecture posits that the same characterization is valid for the $f$-vectors of simplicial spheres.  (The $f$-vector encodes the number of faces of each dimension  of a simplicial complex.) In other words, at least conjecturally, the $f$-vector cannot differentiate between simplicial polytopes and simplicial spheres. 

Let us now restrict our world and consider all {\em centrally symmetric} (cs, for short) simplicial polytopes and all {\em centrally symmetric} simplicial spheres. What is known about the $f$-vectors of these objects? Are there cs analogs of the LBT and UBT, and ultimately of the $g$-theorem and $g$-conjecture, respectively? Surprisingly very little is known, and not for the lack of effort. There is an analog of the LBT for cs simplicial polytopes established by Stanley \cite{Stanley-87} as well as a characterization of minimizers \cite{KNNZ}, and while it is plausible that the same results continue to hold for all cs simplicial spheres, the proofs remain elusive. 

Still, the most mysterious and fascinating side of the story comes from trying to understand the upper bounds: in contrast with the situation for simplicial polytopes and spheres without a symmetry assumption, the existing upper-bound type results and conjectures indicate striking differences between $f$-vectors of cs polytopes and those of cs spheres. In other words, cs spheres and polytopes do not look alike $f$-wise, even though their non-cs counterparts do! For instance, the (appropriately defined) neighborliness of cs polytopes is quite restrictive \cite{LinNov}, and a cs $d$-dimensional polytope with more than $2^d$ vertices cannot be even  $2$-neighborly; yet, by a result of Jockusch  \cite{Jockusch95}, there exist cs $2$-neighborly simplicial spheres of dimension $3$ with any even number $n\geq 8$ of vertices. Thus, for a sufficiently large $n$, the maximum number of edges that a  cs $4$-dimensional polytope with $n$ vertices  can have \emph{differs} from the maximum number of edges that a cs simplicial sphere of dimension $3$ with $n$ vertices can have; moreover, the former quantity (or even its asymptotic behavior) is unknown at present. This indicates how very far we are from even posing a plausible (sharp) Upper Bound Conjecture for cs $d$-dimensional polytopes with $d\geq 4$. At the same time,  Adin \cite{Adin91} and Stanley (unpublished) provided upper bounds on face numbers of cs simplicial spheres of dimension $d-1$ with $n$ vertices; these bounds are attained by a cs $\lfloor d/2\rfloor$-neighborly sphere of dimension $d-1$ with $n$ vertices \emph{assuming} such a sphere exist.

Aside from being of intrinsic interest, additional motivation to better understand the $f$-vectors of cs polytopes arises from the recently discovered tantalizing connections (initiated by Donoho and his collaborators, see, for instance, \cite{Don06a, DonTan10}) between cs polytopes  with many faces and seemingly unrelated areas of error-correcting codes and sparse signal reconstruction. Furthermore, any cs convex body in $\R^d$ is the unit ball of a certain norm on $\R^d$. As a result, methods used in the study of face numbers of cs complexes, at present, involve not only commutative algebra (via the study of associated Stanley--Reisner rings) but also a wealth of techniques from  geometric analysis. Yet, many problems remain notoriously difficult. 

The goal of this paper is thus to survey the few known results on $f$-vectors of cs simplicial polytopes and spheres, showcase several existing techniques, and, most importantly, present many open problems. It is our hope that collecting such problems in one place will catalyze progress  in this fascinating field. For a quick preview of what is known and what is not, see the following table.

\begin{center}
\tiny
\begin{tabular}{|c|c|c|c||c|c|c|} \hline 
& \multicolumn{3}{c||}{\bf non-cs} & \multicolumn{3}{c|}{\bf cs}\\ \hline
 & \begin{tabular}{c}simplicial \\ polytopes \end{tabular} & 
	 \begin{tabular}{c}simplicial \\ spheres \end{tabular} & same? & 
	 \begin{tabular}{c}simplicial \\ polytope\end{tabular}&
	 \begin{tabular}{c}simplicial \\ spheres\end{tabular} & same? \\ \hline\hline
UBT & $\surd$ &  $\surd$ & yes & 
  \begin{tabular}{c}no plausible\\ conjecture\end{tabular} & 
	\begin{tabular}{c}known bounds\\ conjecturally sharp\end{tabular} & no \\ \hline
LBT & $\surd$ &  $\surd$ & yes & $\surd$ & conjecture & 
   \begin{tabular}{c}conjecturally\\ yes\end{tabular}\\ \hline
GLBT & $\surd$ &  conjecture & 
  \begin{tabular}{c}conjecturally\\ yes\end{tabular} & $\surd$ & conjecture & \begin{tabular}{c}conjecturally\\ yes\end{tabular}\\ \hline
\end{tabular}
\end{center}

The rest of the paper is structured as follows. In Section 2, we set up notation and recall basic definitions pertaining to simplicial polytopes and spheres. Section 3 is devoted to neighborliness of cs polytopes.  This leads to discussion of upper-bound type results and problems on face numbers of cs polytopes, see Section 4.  Section 5 deals with neighborliness and upper-bound type results for cs simplicial spheres. Section 6 takes us into the algebraic side of the story: there we present a quick review of the Stanley--Reisner ring --- the major algebraic tool in the study of face numbers, sketch the proof of the classical UBT along with Adin--Stanley's variant of this result for cs spheres, and prepare the ground for the following sections. Sections 7 and 8 are concerned with the lower-bound type results and conjectures. Specifically, in Section 7 we discuss a cs analog of the Generalized LBT for cs simplicial polytopes --- a part of the story that is most well understood, while in Section 8 we consider a natural conjectural cs analog of the LBT for spheres, manifolds, and pseudomanifolds. We close with a few concluding remarks in Section 9.

\section{Preliminaries} 
We start with outlining basic definitions and notation we will use throughout the paper. A {\em polytope} is the convex hull of a set of finitely many points in $\R^d$. One example is the (geometric) {\em $d$-simplex} defined as the convex hull of an arbitrary set of $d+1$ affinely independent points in $\R^d$.  A (proper) {\em face} of any convex body $K$ (e.g., a polytope) is the intersection of $K$ with a supporting affine hyperplane; see, for example, Chapter II of \cite{Ba02}. 
A polytope $P$ is called {\em simplicial} if all of its (proper) faces are simplices. The {\em dimension} of a polytope $P$ is the dimension of the affine hull of $P$. We refer to $d$-dimensional polytopes as $d$-polytopes, and to $i$-dimensional faces as $i$-faces. 

A polytope $P \subset \R^d$ is {\em centrally symmetric} ({\em cs}, for short) if $P = -P$; that is, $x\in P$ if and only if $-x\in P$. An important example of a cs polytope is the $d$-dimensional {\em cross-polytope} $\C^*_d=\conv(\pm p_1,\pm p_2,\ldots, \pm p_d)$, where $p_1,p_2,\ldots, p_d$ are points in $\R^d$ whose position vectors form a basis for $\R^d$. If these position vectors form the standard basis of $\R^d$, denoted $\{e_1,\ldots, e_d\}$, then the resulting polytope is the unit ball of the $\ell^1$-norm; we  refer to this particular instance of the cross-polytope as the {\em regular} cross-polytope.

A {\em simplicial complex}  $\Delta$ on a (finite) vertex set $V=V(\Delta)$ is a collection of subsets of $V$ that is closed under inclusion; an example is the (abstract) simplex on $V$, $\overline{V}:=\{F : F\subseteq V\}$. The elements of $\Delta$ are called {\em faces} and the maximal under inclusion faces are called {\em facets}. The {\em dimension} of a face $F\in\Delta$ is $\dim F=|F|-1$, and the dimension of $\Delta$ is $\dim\Delta:=\max\{\dim F \ : \ F\in\Delta\}$. The {\em $k$-skeleton} of $\Delta$, $\Skel_k(\Delta)$, is a subcomplex of $\Delta$ consisting of all faces of dimension $\leq k$. The {\em $f$-vector} of a simplicial complex $\Delta$ is $f(\Delta):=(f_{-1}(\Delta), f_{0}(\Delta), \ldots, f_{\dim\Delta}(\Delta)),$ where $f_i=f_i(\Delta)$ denotes the number of $i$-faces of $\Delta$; the numbers $f_i$ are called the {\em $f$-numbers} of  $\Delta$.  

Each simplicial complex $\Delta$ admits a geometric realization $\|\Delta\|$ that contains a geometric $i$-simplex for each $i$-face of $\Delta$.  
A simplicial complex $\Delta$ is a {\em simplicial sphere} ({\em simplicial ball}, or {\em simplicial manifold}, respectively) if $\|\Delta\|$ is homeomorphic to a sphere (ball, or closed manifold, respectively). If $P$ is a simplicial $d$-polytope, then  the empty set along with the collection of the vertex sets of all the (proper) faces of $P$ is a  simplicial sphere of dimension $d-1$ called the {\em boundary complex} of $P$; it is denoted by $\partial P$. While it follows from Steinitz' theorem that every simplicial $2$-sphere can be realized as the boundary complex of a simplicial $3$-polytope, for $d\geq 4$, there are many more combinatorial types of simplicial $(d-1)$-spheres than those of boundary complexes of simplicial $d$-polytopes, see \cite{Kal, NeSanWil, PfeiZieg}. 

A simplicial complex $\Delta$ is {\em centrally symmetric} (or cs)  if it is equipped with a simplicial {\em involution} $\phi: \Delta \to \Delta$ such that for every non-empty face $F\in\Delta$, $\phi(F)\neq F$. We refer to $F$ and  $\phi(F)$ as {\em antipodal} faces. For instance, the boundary complex of any cs simplicial polytope $P$ is a cs simplicial sphere with the involution $\phi$ induced by the map $\phi(v)=-v$ on the vertices of $P$. 

A simplicial complex $\Delta$ is {\em $k$-neighborly} if  every set of $k$ of its vertices forms a face of $\Delta$. Equivalently, a simplicial complex $\Delta$ with $n$ vertices is $k$-neighborly if its $(k-1)$-skeleton coincides with the $(k-1)$-skeleton of the $(n-1)$-simplex. Since in a cs complex, a vertex and its antipode  can never form an edge,  this definition of neighborliness requires a suitable adjustment for cs complexes. We say that a cs simplicial complex $\Delta$ is {\em  $k$-neighborly} if  every set of $k$ of its vertices, no two of which are antipodes, forms a face of $\Delta$. Equivalently, a cs simplicial complex $\Delta$ on $2m$ vertices is $k$-neighborly if its $(k-1)$-skeleton coincides with the $(k-1)$-skeleton of $\partial\C^*_m$. 
In particular, if $\Delta$ is a $k$-neighborly simplicial complex, then $f_{j-1}(\Delta)={f_0 \choose j}$ for all $j\leq k$, while if $\Delta$ is a cs $k$-neighborly simplicial complex, then $f_{j-1}(\Delta)=2^j{f_0/2 \choose j}$ for all $j\leq k$. 

It is worth mentioning that similar definitions apply to \emph{general} (i.e., not necessarily simplicial) polytopes. Specifically, a polytope $P$ is $k$-neighborly if every set of $k$ of its vertices forms the vertex set of a face of $P$; a cs polytope $P$ is $k$-neighborly if  every set of $k$ of its vertices, no two of which are antipodes, forms the vertex set of a face of $P$. In the next two sections we work with general cs polytopes.


\section{How neighborly can a cs polytope be?} 

Our story begins with the {\em  cyclic polytope}, $C_d(n)$, which is defined as the convex hull of $n\geq d+1$ distinct points on the {\em moment curve} $\{(t, t^2,\ldots,t^d)\in\R^d \, : \, t\in \R\}$ or on the {\em trigonometric moment curve} $\{(\cos t, \sin t, \cos 2t, \sin 2t,\ldots, \cos kt, \sin kt)\in\R^{2k} \, : \, t\in \R\}$, assuming $d=2k$. Both types of cyclic polytopes were investigated by Carath\'eodory \cite{Carath1911} and later by Gale \cite{Gale63}, who, in particular, showed that the two types are combinatorially equivalent (assuming $d$ is even) and independent of the choice of points.  In fact, the two types are projectively equivalent, see \cite[Exercise 2.21]{Ziegler}. These polytopes were also rediscovered by Motzkin \cite{Motz57, GruMotz63} and many others; see books \cite{Ba02, Ziegler} for more information on these amazing polytopes. The properties  of the cyclic polytope $C_d(n)$ that will be important for us here  are: it is a (non cs) simplicial $d$-polytope with $n$ vertices; furthermore, it is $\lfloor d/2 \rfloor$-neighborly for all $n\geq d+1$. 

The existence of cyclic polytopes motivated several questions, among them:
do there exist cs $d$-polytopes (apart from the cross-polytope) that are $\lfloor d/2 \rfloor$-neighborly or at least ``highly"  neighborly? This section discusses the state-of-affairs triggered by this question.

It became apparent from the very beginning that the answer is likely to be both interesting and complicated: several works from the sixties indicated that in contrast to the general case, the neighborliness of cs polytopes might be rather restricted. Specifically,  Gr\"unbaum showed in 1967 \cite[p.~116]{Gru-book} that while there exists a cs $2$-neighborly $4$-polytope with $10$ vertices, no cs $4$-polytope with  $12$ or more vertices can be $2$-neighborly. This observation was extended by McMullen and Shephard \cite{McMShep} who proved that while there exists a cs $\lfloor d/2\rfloor$-neighborly $d$-polytope with $2(d+1)$ vertices, a cs $d$-polytope with at least $2(d+2)$ vertices cannot be more than  $\lfloor (d+1)/3\rfloor$-neighborly.  A cs $\lfloor d/2\rfloor$-neighborly $d$-polytope with $2(d+1)$ vertices is easy to construct: for instance, $\conv\big(\pm e_1,\pm e_2,\ldots, \pm e_d, \pm \sum_{i=1}^d e_i\big)$ does the job. On the other hand, to show that a cs $d$-polytope with $2(d+2)$ vertices can only be  $\lfloor (d+1)/3\rfloor$-neighborly and to construct cs polytopes achieving this bound, McMullen and Shephard \cite{McMShep} introduced and studied cs transformations of cs polytopes --- a cs analog of celebrated Gale transforms. (See \cite[Section 5.4]{Gru-book} and \cite[Chapter 6]{Ziegler} for a detailed description of Gale transforms and their applications.)

Let $k(d,n)$ denote  the largest integer $k$ such that there exists a cs $d$-polytope with $2(n + d)$ vertices that is $k$-neighborly.  Influenced by their result, McMullen and Shephard \cite{McMShep} conjectured that, in fact, $k(d,n)\leq \lfloor  (d+n-1)/(n+1)\rfloor$ for all $n\geq 3$. Their conjecture was subsequently refuted by Halsey \cite{Halsey} and then by Schneider \cite{Schneider}, but only for $d \gg n$. Namely, Schneider showed that  
$$
\liminf_{d\rightarrow\infty} \frac{k(d,3)}{d+3} \geq 1-2^{-1/2} \quad \mbox{and} \quad
\liminf_{d\rightarrow\infty} \frac{k(d,n)}{d+n} \geq 0.2390 \mbox{ for all $n$}.
$$
 On the other hand, Burton \cite{Burton} proved that  a cs $d$-polytope with a sufficiently large number of vertices  ($\approx (d/2)^{d/2}$) indeed cannot be  even $2$-neighborly. Burton's proof is surprisingly short and simple: it relies on John's ellipsoid theorem, see for instance \cite[Chapter 3]{Ball}, along with a quantitative version of the observation that any sufficiently large finite subset of the Euclidean unit sphere contains two points that are very close to each other.

To the best of the author's knowledge, McMullen--Shephard's values $k(d,1)=\lfloor \frac{d}{2}\rfloor$ and $k(d,2)=\lfloor \frac{d+1}{3}\rfloor$ (see \cite{McMShep}) remain the \emph{only} known exact values of $k(d,n)$. In particular, $k(4,1)=2$, $k(4,2)=1$, $k(5,1)=k(5,2)=2$, but it appears unknown whether $k(5,3)$ equals $2$ or $1$. On the other hand, the asymptotics of $k(d,n)$ is now well understood:

 \begin{theorem} \label{cs-neighb-thm} There exist absolute constants $C_1, C_2>0$ independent of $d$ and $n$ such that
$$\frac{C_1 d}{1+\log \frac{n+d}{d}} \leq
k(d,n) \leq 1+\frac{C_2 d}{1+\log \frac{n+d}{d}}.
$$
\end{theorem}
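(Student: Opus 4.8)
The plan is to translate both bounds into the language of quotients of the cross-polytope --- equivalently, of $\ell^1$-sparse recovery --- where each becomes an instance of a classical fact. The starting point is the elementary observation that every cs $d$-polytope $P$ with $2N$ vertices, $N:=n+d$, is a linear image of $\C^*_N$: writing the vertices of $P$ as $\pm v_1,\dots,\pm v_N$, the linear surjection $\pi\colon\R^N\to\R^d$ with $\pi(e_i)=v_i$ satisfies $\pi(\C^*_N)=P$ and carries the $2N$ vertices of $\C^*_N$ bijectively to those of $P$. Since each face of $P=\pi(\C^*_N)$ is the image of a unique face of $\C^*_N$, the polytope $P$ is $k$-neighborly precisely when $\pi$ maps the $(k-1)$-skeleton of $\C^*_N$ bijectively onto that of $\partial P$, and by the standard dictionary (Donoho) this holds if and only if $V:=\ker\pi$, a subspace of dimension $N-d$, has the strict null-space property of order $k$: $\|x_S\|_1<\|x_{S^c}\|_1$ for every $x\in V\setminus\{0\}$ and every $S$ with $|S|\le k$. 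Hence Theorem~\ref{cs-neighb-thm} is equivalent to the assertion that the largest order of a strict null-space property that an $(N-d)$-dimensional subspace of $\R^N$ can have is of order $d/(1+\log\tfrac{n+d}{d})$ --- the Kashin--Garnaev--Gluskin threshold for almost Euclidean sections of the $\ell^1$-ball, in its polytopal incarnation studied by Donoho and by Linial--Novik.

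For the lower bound I would take $\pi$ random, e.g.\ with i.i.d.\ Gaussian entries, so that $V=\ker\pi$ is a uniformly distributed $(N-d)$-plane. Fix a face $F=\{\varepsilon_1 e_{i_1},\dots,\varepsilon_t e_{i_t}\}$ of $\C^*_N$ with distinct indices and $t\le k$; the event that $\pi$ destroys $F$ is exactly the event that $V$ meets the relative interior of a fixed convex cone $\mathcal C_F$ --- the descent cone of $\|\cdot\|_1$ at the associated $t$-sparse point --- whose spherical part has Gaussian width $O(\sqrt{t\log(N/t)})$. Gordon's escape-through-the-mesh theorem then yields $\Pr[V\cap\mathcal C_F\neq\{0\}]\le e^{-cd}$ whenever $d\ge C\,t\log(N/t)$. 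As there are at most $\sum_{t\le k}2^t\binom Nt=\exp\!\big(O(k\log(N/k))\big)$ such faces, a union bound shows that for $k=\big\lfloor C_1 d/(1+\log\tfrac{n+d}{d})\big\rfloor$, with $C_1$ a small enough absolute constant, with positive probability $\pi$ preserves the whole $(k-1)$-skeleton of $\C^*_N$; any such $\pi$ produces the desired cs $d$-polytope with $2(n+d)$ vertices that is $k$-neighborly (a generic $\pi$ keeps all $2N$ images as honest vertices, which is already forced once the $1$-skeleton is preserved).

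For the upper bound I must show the converse: an $(N-d)$-plane cannot have the strict null-space property of order $k$ when $k$ substantially exceeds $d/(1+\log\tfrac{n+d}{d})$ (when $n+d$ is of order $d$ the asserted bound is trivial, so we may assume $\log\tfrac{n+d}{d}\ge 1$). A geometric route is available via John's ellipsoid theorem: after a linear normalization, harmless for both central symmetry and neighborliness, one has $B^d\subseteq P\subseteq\sqrt d\,B^d$ (with $B^d$ the Euclidean unit ball), so the vertices lie in the shell $1\le|v_i|\le\sqrt d$ and every proper face of $P$ avoids $\Int B^d$; hence for distinct $i_1,\dots,i_t$ with $t\le k$, the convex hull of $\varepsilon_1 v_{i_1},\dots,\varepsilon_t v_{i_t}$ contains no point of norm $<1$, and exhibiting a short ``near-cancelling'' selection --- distinct $i_1,\dots,i_t$ and signs with $\big|\sum_j\varepsilon_j v_{i_j}\big|<t$ --- forces a contradiction. \textbf{Producing such a selection at the sharp threshold is the step I expect to be the main obstacle.} The naive attempt --- cover the shell by balls of radius $<1$ and invoke pigeonhole --- succeeds only once $N\gtrsim d^{d/2}$ and merely recovers Burton's bound; the correct estimate $N\lesssim d\,e^{C_2 d/(k-1)}$ loses no logarithmic factor and, as far as I can see, genuinely rests on the sharp lower bound for the Gelfand widths of the $\ell^1$-ball (Garnaev--Gluskin, Kashin) --- equivalently, on the classical fact that $\ell^1$-recovery of $k$-sparse vectors in $\R^N$ demands $\Omega\!\big(k\log(N/k)\big)$ linear measurements --- rather than on a volumetric packing bound. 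Granting that input, rearranging gives $k\le 1+C_2 d/(1+\log\tfrac{n+d}{d})$, and I would expect most of the write-up to be devoted to presenting this Gelfand-width estimate in self-contained, polytopal form.
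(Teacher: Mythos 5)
Your lower bound argument is correct and reaches the same threshold, but by a genuinely different route from the one sketched in the paper. The paper follows Linial--Novik: it passes to the cs transform (an $n$-dimensional subspace $L\subset\R^{n+d}$), invokes Garnaev--Gluskin to produce a subspace on which the $\ell^2$-to-$\ell^1$ norm ratio is bounded by $\tilde C\sqrt{(1+\log((n+d)/d))/d}$, and then shows that such an ``almost Euclidean'' subspace yields the required neighborliness. Your version skips the intermediate almost-Euclidean property altogether and argues directly that a random $n$-dimensional subspace has the strict null-space property of every order up to $\approx d/\log\frac{n+d}{d}$, via Gordon's escape-through-the-mesh applied to descent cones and a union bound over faces of $\C^*_{n+d}$. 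That is closer in spirit to the Donoho random-projection picture cited in the survey than to the Linial--Novik argument; both are valid, and yours is arguably more self-contained once Gordon's theorem is granted. (Do note one small thing to tidy up: the union bound naturally produces a constraint of the form $d\gtrsim k\log((n+d)/k)$ rather than $k\log((n+d)/d)$, and you should say a word about converting between the two regimes.)

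The upper bound is where your proposal has a real gap, and one you honestly flag yourself. The paper's proof of $k(d,n)\le 1+C_2 d/(1+\log\frac{n+d}{d})$ is \emph{precisely} ``a volumetric packing bound,'' the thing you conjecture cannot give the sharp estimate. Writing $k=2s$, Linial--Novik define a ``good'' family $\mathcal F$ of $(s-1)$-faces (any two sharing at most $s/2$ vertices, and disjoint from each other's antipodes), show via the same argument as Lemma~\ref{disj-int} that the barycentric translates $\{P+2b_F : F\in\mathcal F\}$ have pairwise disjoint interiors, observe that they all lie inside $3P$, and conclude $|\mathcal F|\le 3^d$; a short counting argument produces a good family of size roughly $(|V|/s)^{\Omega(s)}$, which rearranges to the stated bound. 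This is a direct generalization of the Danzer--Gr\"unbaum $2^d$ argument used in the paper for Theorem~\ref{2-neighb}, not a Gelfand-width argument. Your John's-ellipsoid route is a different strategy; as you yourself observe, the naive pigeonhole within the shell $B^d\subseteq P\subseteq\sqrt d\,B^d$ only recovers Burton's much weaker bound, and you end by deferring the hard part to the Garnaev--Gluskin Gelfand-width lower bound without proving it. As it stands the upper half of your argument is incomplete; you would need either to carry out the Gelfand-width estimate in polytopal form (as you anticipate), or to switch to the good-family/translate-packing argument that the paper actually uses.
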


\noindent Theorem \ref{cs-neighb-thm} is due to Linial and Novik \cite{LinNov}. A dual version of the lower bound part of this theorem was also independently established by Rudelson and Vershynin \cite{RudVersh05}. 

Two extreme, and hence particularly interesting, cases of Theorem \ref{cs-neighb-thm} deserve a special mention: the case of $k(d,n)$ proportional to $d$ and the case of $k(d,n)=1$. Donoho \cite{Don06a} proved that there exists $\rho>0$ such that for large $d$, the orthogonal projection of the $2d$-dimensional regular cross-polytope onto a $d$-dimensional subspace of $\R^{2d}$, chosen uniformly at random, is at least $\lfloor \rho d\rfloor$-neighborly with high probability and provided numerical evidence that $\rho\geq 0.089$. In other words, for large $d$, $k(d,d)\geq \rho d$. (The estimates from \cite{LinNov} guarantee that $\rho\geq 1/400$.) 
As for the other extreme, Theorem \ref{cs-neighb-thm} shows that the largest number of vertices in a cs $2$-neighborly $d$-polytope is $e^{\Theta(d)}$. In fact, the following more precise result is known, see \cite[Theorem 1.2]{LinNov} and \cite[Theorem 3.2]{BarvLeeN-antipodal}.

\begin{theorem} \label{2-neighb}
A cs $2$-neighborly $d$-polytope has at most $2^{d}$ vertices, i.e., $k(d, 2^{d-1}+1-d)=1$. On the other hand, for any even $d\geq 6$, there exists a cs $2$-neighborly $d$-polytope with $2(\sqrt{3}^d/3 - 1)$ vertices. 
\end{theorem}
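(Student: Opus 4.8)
\medskip
\noindent\emph{Proof proposal.} The statement combines an upper bound with a construction, which I would treat separately.

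\emph{The upper bound (a volume/packing argument).} Recall that a cs $d$-polytope $P$ is the unit ball of the norm $\|x\|_P:=\inf\{\lambda>0:x\in\lambda P\}$, and that the relative interior of any proper face of $P$ --- in particular, the midpoint of any edge --- lies on $\partial P$. Suppose $P$ is cs $2$-neighborly with vertex set $V$, $|V|=2m$, and let $x,y\in V$ be distinct. If $y=-x$, then $\|x-y\|_P=\|2x\|_P=2$. Otherwise $x$ and $-y$ are distinct, non-antipodal vertices of $P$, so $[x,-y]$ is an edge; its midpoint $\tfrac12(x-y)$ lies on $\partial P$, whence $\|x-y\|_P=2$. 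Either way $x-y\notin\Int(2P)$, using that $-P=P$ and hence $P-P=2P$. Consequently the $2m$ translates $\{x+P:x\in V\}$ have pairwise disjoint interiors --- an overlap $\Int(x+P)\cap\Int(y+P)\ne\emptyset$ would force $x-y\in\Int P-\Int P\subseteq\Int(2P)$ --- while each of them is contained in $P+P=2P$. Comparing volumes gives $2m\cdot\vol(P)\le\vol(2P)=2^d\vol(P)$, so $P$ has at most $2m\le 2^d$ vertices. Finally, since there plainly exist cs $d$-polytopes with $2\big((2^{d-1}+1-d)+d\big)=2^d+2$ vertices, none of which is then $2$-neighborly, we obtain $k(d,2^{d-1}+1-d)=1$.

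\emph{The construction.} Here I would use the standard vehicle for cs neighborliness, the symmetric moment curve $U_k(t)=(\cos t,\sin t,\cos 3t,\sin 3t,\ldots,\cos(2k-1)t,\sin(2k-1)t)\in\R^{2k}$ (with $d=2k$), which satisfies $U_k(t+\pi)=-U_k(t)$, so that the convex hull of any finite $\pi$-antiperiodic parameter set is automatically a cs polytope. The key input is the Barvinok--Novik analysis of this curve: there is an angle $\phi_k\in(\pi/2,\pi)$ such that any points of $U_k$ whose parameters lie in a single open arc of length $\phi_k$ form the vertex set of a face of $\conv U_k$; in particular, any two such points span an edge. Since $\phi_k<\pi$, one cannot simply take a dense piece of the curve --- two antipodal parameters never fit inside an arc of length $<\pi$ --- so the construction must instead use a carefully chosen, combinatorially structured finite parameter set (the exponent $3^{k-1}$ in $2(\sqrt{3}^d/3-1)=2(3^{k-1}-1)$ points to a base-$3$ indexing of these parameters, and the ``$-1$'' to discarding one degenerate choice), arranged so that every pair of non-antipodal vertices can be certified to span an edge --- either directly from the arc property, or by exhibiting an explicit supporting trigonometric polynomial of odd degree at most $2k-1$. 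Writing down such a configuration and verifying that it has exactly $2(\sqrt{3}^d/3-1)$ vertices is the content of \cite[Theorem 3.2]{BarvLeeN-antipodal}.

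\emph{Main obstacle.} The difficulty lies entirely in this last verification: the symmetric moment curve is only \emph{locally} (arc-wise) neighborly, so turning it --- or a finite combinatorial variant of it --- into a \emph{globally} $2$-neighborly cs polytope requires controlling the supporting functionals for the ``far apart'' pairs of vertices, which amounts to understanding the root patterns of odd trigonometric polynomials of degree $\le 2k-1$. The upper bound, by contrast, is short and self-contained; and the gap between $(\sqrt{3})^d$ and $2^d$ reflects that the true maximum is still open.
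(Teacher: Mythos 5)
Your upper bound argument is correct and is essentially the paper's: the paper proves Lemma~\ref{disj-int} (if $\Int(P_u)\cap\Int(P_v)\neq\emptyset$ then the barycenter $(u-v)/2$ of $[u,-v]$ lies in $\Int P$, so $[u,-v]$ is not an edge) and then applies the Danzer--Gr\"unbaum volume comparison $|V|\vol(P)\leq\vol(2P)$. Phrasing the same observation contrapositively in norm language ($\|x-y\|_P=2$ for all distinct vertex pairs, hence the translates $x+P$ pack into $2P$) is cosmetic, not a different route.

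The construction, however, is left unproved: you list the ingredients you expect and cite the result, which is a deferral, not an argument. More to the point, your guess at the mechanism is not what actually happens. You propose taking a single symmetric moment curve $U_k\subset\R^{2k}$ (for $d=2k$) and extracting a clever $\pi$-antiperiodic finite parameter set from it. But the obstacle you yourself name --- that $U_k$ is only \emph{arc-wise} neighborly, with arc length $\alpha_k<\pi$ --- is precisely why no parameter set on one copy of $U_k$ can be globally $2$-neighborly: Corollary~\ref{base-case} shows that clustering near $\{0,\pi/2,\pi,3\pi/2\}$ on $U_2$ only certifies about $\tfrac34\binom{N}{2}$ edges, and there is no way to place all non-antipodal pairs inside a single arc of length $2\pi/3$. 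The paper instead works with the higher-dimensional curve $\Phi_m(t)=(U_1(t),U_1(3t),\ldots,U_1(3^m t))\subset\R^{2(m+1)}$, evaluated at the $2(3^m-1)$ equally spaced points of $\mathbb{S}$ (Theorem~\ref{record-edges}(1)). The verification is by induction on $m$: projecting $\Phi_m$ to the last $2m$ coordinates gives $t\mapsto\Phi_{m-1}(3t)$, while projecting to the first four gives $\Phi_1=U_2$, so every non-antipodal pair of vertices is certified to span an edge either by the inductive hypothesis through the ``speed $3$'' projection or by Corollary~\ref{base-case} through the $U_2$ projection. Taking $m=d/2-1\geq 2$ yields dimension $d$ and $2(3^{d/2-1}-1)=2(\sqrt{3}^{\,d}/3-1)$ vertices. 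So the base-$3$ structure you sensed is real, but it enters through the multipliers $3^j$ on the argument of the curve --- a tower of $U_2$-projections --- rather than through a base-$3$ labeling of parameters on one fixed $U_k$; the whole point of the construction is to escape the single-curve arc obstruction, not to push it harder.
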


Our discussion of $k(d,n)$ is summarized in a table below. 
\begin{center}
\small
\begin{tabular}{|c|c|} \hline 
$n$ & $k(d,n)$ \\ \hline\hline
$1$ & $\big\lfloor\frac{d}{2}\big\rfloor$ \\ \hline
$2$ & $ \big\lfloor \frac{d+1}{3}\big\rfloor$ \\ \hline
$d$ & proportional to $d$ \\ \hline
$ \leq 3^{\lfloor d/2\rfloor-1}-1-d$ & $\geq 2$ \\ \hline
$\geq 2^{d-1}+1-d$ & 1\\ \hline
\end{tabular}
\end{center}

We devote the rest of this section to pointing out some of the main ideas used in the proofs. In particular, the proof of the fact that a cs $2$-neighborly $d$-polytope has at most $2^{d}$ vertices is so short that we cannot avoid the temptation to provide it here. 
 For a polytope $P\subset \R^d$ and a vector $a\in\R^d$, define $P_a:=P+a$ to be the translation of $P$ by $a$,  where ``+" denotes the Minkowski addition. The first step in the proof is the following simple observation from \cite{BarvN}:

\begin{lemma} \label{disj-int}
Let $P\subset \R^d$ be a cs $d$-polytope, and let $u$ and $v$ be vertices of $P$, so that $-v$ is also a vertex of $P$. If the polytopes $P_u$ and $P_v$  have intersecting interiors then the vertices $u$ and $-v$ are not connected by an edge. Consequently, if $P$ is a cs $2$-neighborly polytope with vertex set $V$, then the polytopes $\{P_v \ : \  v\in V\}$ have pairwise disjoint interiors.
\end{lemma}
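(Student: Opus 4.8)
The plan is to first translate the hypothesis that $P_u$ and $P_v$ have intersecting interiors into a membership statement for a single copy of $P$, and then to recognize the point in question as the midpoint of a hypothetical edge. The starting observation is that $\Int P_u\cap\Int P_v\neq\emptyset$ precisely when there is a point $p$ with $p-u\in\Int P$ and $p-v\in\Int P$. Since $P=-P$ we have $\Int P=-\Int P$, so $v-p=-(p-v)\in\Int P$; averaging the two interior points $p-u$ and $v-p$ and using convexity of $\Int P$ gives $\tfrac12(v-u)\in\Int P$. Conversely, if $\tfrac12(v-u)\in\Int P$ then $p:=\tfrac12(u+v)$ works. Hence $P_u$ and $P_v$ have intersecting interiors if and only if $\tfrac12(v-u)\in\Int P$, equivalently (again using $P=-P$) if and only if $\tfrac12(u-v)\in\Int P$.

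For the first assertion I would argue contrapositively: assume $u$ and $-v$ are joined by an edge $E=[u,-v]$ of $P$. As $u$ and $-v$ are distinct vertices, the midpoint $\tfrac12\big(u+(-v)\big)=\tfrac12(u-v)$ lies in the relative interior of $E$, and since $E$ is a proper face of $P$ this point lies on $\partial P$, hence not in $\Int P$. By the equivalence above, $\Int P_u\cap\Int P_v=\emptyset$, which is exactly the contrapositive of the desired implication.

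For the ``consequently'' clause, suppose $P$ is cs $2$-neighborly with vertex set $V$ and fix two distinct vertices $v,w\in V$; I want $\Int P_v\cap\Int P_w=\emptyset$, i.e.\ $\tfrac12(w-v)\notin\Int P$, and $\tfrac12(w-v)$ is the midpoint of the segment $[w,-v]$. If $w\neq -v$, then $w$ and $-v$ are distinct vertices that are not antipodes (they would be antipodes only if $w=v$), so cs $2$-neighborliness forces $[w,-v]$ to be an edge of $P$, and its midpoint lies on $\partial P$; if instead $w=-v$, then $\tfrac12(w-v)=-v$ is itself a vertex of $P$, again on $\partial P$. In either case $\tfrac12(w-v)\notin\Int P$, so the interiors are disjoint. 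I do not expect any genuine obstacle here: the whole argument rests on the elementary fact that the midpoint of a proper face of $P$ is never an interior point, together with the symmetry $P=-P$; the only step needing a little care is the degenerate antipodal subcase $w=-v$ at the very end.
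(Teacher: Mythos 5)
Your argument is correct and essentially the same as the paper's: both hinge on the observation that the point $\tfrac12(u-v)$ is simultaneously expressible as an average of interior points of $P$ (via central symmetry) and as the midpoint of the putative edge $[u,-v]$, which cannot lie in $\Int P$. You merely run the implication contrapositively, upgrade it to an ``iff'' characterization, and spell out the degenerate case $w=-v$ in the ``consequently'' step, which the paper leaves implicit.
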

\begin{proof}
The assumption that $\Int(P_u)\cap\Int(P_v)\neq \emptyset$ implies that there exist $x,y\in\Int(P)$ such that $x+u=y+v$, or equivalently, that $(y-x)/2=(u-v)/2$. Since $P$ is centrally symmetric, and $x,y\in\Int(P)$, the point $q:=(y-x)/2$ is an interior point of $P$. As $q$ is also the barycenter of the line segment connecting $u$ and $-v$, this line segment is not an edge of $P$.
\end{proof}

The rest of the proof that a cs $2$-neighborly $d$-polytope has at most $2^d$ vertices utilizes a volume trick that goes back to Danzer and Gr\"unbaum \cite{DanzGrunb}. If $P\subset\R^d$ is a cs $2$-neighborly $d$-polytope with vertex set $V$, then by Lemma \ref{disj-int}, the polytopes $\{P_v \ : \  v\in V\}$ have pairwise disjoint interiors. Therefore,
\[ \vol\big(\bigcup_{v\in V} P_v\big)=\sum_{v\in V} \vol (P_v)=|V| \cdot \vol(P).\]
On the other hand, since for $v\in V$, $P_v=P+v\subset 2P$, it follows that $\bigcup_{v\in V}P_v \subseteq 2P$, and so
\[ \vol\big(\bigcup_{v\in V} P_v\big) \leq \vol(2P)=2^d \cdot \vol(P).\]
Comparing these two equations yields $|V|\leq 2^d$, as desired.

The proof of the upper bound part of Theorem \ref{cs-neighb-thm} relies on a more intricate application of the same volume trick. Let $P$ be a cs $k$-neighborly $d$-polytope, where $k=2s$ for some integer $s$. We say that a family $\mathcal{F}$ of $(s-1)$-faces of $P$ is {\em good} if every two elements $F\neq G$ of $\mathcal{F}$ satisfy the following conditions: $F$ and $G$ share at most $s/2$ vertices, while $F$ and $-G$ have no common vertices. To obtain an upper bound on the size of the vertex set $V$ of $P$ in terms of $d$ and $k$, one first observes that if $\mathcal{F}$ is a good family, $F\neq G$ are in  $\mathcal{F}$, and $b_F$ and $b_G$ are the barycenters of $F$ and $G$, then the polytopes $P+2b_F$  and $P+2b_G$ have disjoint interiors (cf.~Lemma \ref{disj-int}). One then uses a simple counting argument to show that there is a relatively large (in terms of $d$, $s$, and $|V|$) good family $\mathcal{F}$. Since the translates $\{P+2b_F \ : \  F\in \mathcal{F}\}$ of $P$ have pairwise disjoint interiors and are all contained in $3P$, the volume trick yields a desired upper bound on $|\mathcal{F}|$, and hence also on $|V|$; see the proof of Theorem 1.1 in \cite{LinNov} for more details.

The proof of the lower bound in Theorem \ref{cs-neighb-thm} is based on studying the cs transforms of cs polytopes introduced in \cite{McMShep} and on ``high-dimensional paradoxes" such as Ka\v{s}in's theorem \cite{Kasin} and its generalization due to Garnaev and Gluskin \cite{GarnGlus}. Specifically, Ka\v{s}in's theorem asserts that there is an absolute constant $C$ (for instance, $32$ does the job, see \cite[Lecture 4]{Ball}) such that for every $d$, $\R^{2d}$ has a $d$-dimensional subspace, $L_d$, with the following property: the ratio of the $\ell^2$-norm to the $\ell^1$-norm of any nonzero vector $x\in L_d$ is in the interval $[\frac{1}{\sqrt{2d}}, \frac{C}{\sqrt{2d}}]$; we refer to such a subspace as a Ka\v{s}in subspace. Via cs transforms, $d$-dimensional subspaces of $\R^{2d}$ correspond to cs $d$-polytopes with $4d$ vertices; furthermore, a careful analysis of cs transforms shows that the polytopes corresponding to Ka\v{s}in subspaces are $k$-neighborly with $k$ proportional to $d$. 

More generally, a theorem due to Garnaev and Gluskin \cite{GarnGlus} quantifies the extent to which an $n$-dimensional subspace of $\R^{n+d}$ can be ``almost Euclidean" (meaning that the ratio of the $\ell^2$-norm to the $\ell^1$-norm of non-zero vectors remains within certain bounds, more precisely, it is $\leq  \tilde{C}\sqrt{\frac{1+\log((n+d)/d)}{d}}$ for some absolute constant $\tilde{C}$). Via cs transforms, $n$-dimensional subspaces of $\R^{n+d}$ correspond to cs $d$-polytopes with $2(n+d)$ vertices, and the ``almost Euclidean" subspaces correspond to cs polytopes with neighborliness given by the lower bound in Theorem  \ref{cs-neighb-thm}, see \cite{LinNov} for technical details.  

The proof of the Garnaev--Gluskin result and hence  also of the lower bound part of Theorem \ref{cs-neighb-thm}  is probabilistic in nature: it does not give an explicit construction of neighborly cs polytopes, but rather shows that they form a set of positive probability in a certain probability space. Indeed it is an interesting open question to find an explicit construction for highly neighborly cs polytopes that meet the lower bound in Theorem \ref{cs-neighb-thm}. We discuss some known explicit constructions (for instance that of a cs $2$-neighborly $d$ polytope with $\approx \sqrt{3}^d$ vertices) in the next section. It would also be extremely interesting to shed some light on the exact values of $k(d,n)$: 

\begin{problem} Determine the exact values of $k(d,n)$, or at least find the value of  $n_0(d):=\min\{n : k(d,n)=1\}$, that is, find the number $n$ starting from which a cs $d$-polytope with $2(d+n)$ vertices cannot be even $2$-neighborly.
\end{problem}


\section{Towards an upper bound theorem for cs polytopes}
The fame of the cyclic polytope comes from the Upper Bound Theorem (UBT, for short) conjectured by Motzkin \cite{Motz57} and proved by McMullen \cite{McMullen70}. It asserts that among all $d$-polytopes with $n$ vertices, the cyclic polytope $C_d(n)$ simultaneously maximizes all the face numbers. The goal of this section is to summarize several upper-bound type results and problems for cs polytopes motivated by the UBT.


What is the maximum number of $k$-faces that a {\em centrally symmetric}  $d$-polytope with $N$ vertices can have? While our discussion in the previous section suggests that at present we are very far from even posing a plausible conjecture, certain asymptotic results on the maximum possible number of edges are known. Specifically, the following generalization of Theorem \ref{2-neighb} holds. This result is in sharp contrast with the fact that $f_1(C_d(n))={n \choose 2}$ as long as $d\geq 4$.

\begin{theorem}  \label{max-edges}
Let $d\geq 4$. If $P\subset \R^d$ is a cs $d$-polytope on $N$ vertices, then 
$$f_1(P) \leq (1-2^{-d})\frac{N^2}{2}.$$ 
On the other hand, there exist cs $d$-polytopes with $N$ vertices (for an arbitrarily large $N$) and at least 
$\left(1-3^{-\lfloor d/2-1\rfloor}\right){N \choose 2} \approx (1-\sqrt{3}^{-d})\cdot\frac{N^2}{2}$ edges. 
\end{theorem}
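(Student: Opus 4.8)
My plan is to promote the Danzer--Gr\"unbaum volume trick behind Lemma~\ref{disj-int} into a quantitative count of non-edges of $P$. Let $P\subset\R^d$ be a cs $d$-polytope with vertex set $V$, $|V|=N$, and for $v\in V$ put $P_v:=P+v$; since $v\in P$ we have $P_v\subseteq 2P$. Introduce the multiplicity function $h(x)=\#\{v\in V:x\in P_v\}$ on $2P$, so that $\int_{2P}h=N\vol(P)$ while $\int_{2P}h^2=\sum_{u,v\in V}\vol(P_u\cap P_v)$. Cauchy--Schwarz gives $\int_{2P}h^2\ge\bigl(\int_{2P}h\bigr)^2/\vol(2P)=N^2\vol(P)/2^d$, and subtracting the diagonal contribution $N\vol(P)$ yields $\sum_{u\ne v}\vol(P_u\cap P_v)\ge(N^2/2^d-N)\vol(P)$. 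Since each summand is at most $\vol(P)$, there are at least $N^2/2^d-N$ ordered pairs $(u,v)$ with $u\ne v$ and $\Int(P_u)\cap\Int(P_v)\ne\emptyset$; for each of these Lemma~\ref{disj-int} furnishes the non-edge $\{u,-v\}$ of $\partial P$ (and one checks $u\ne -v$, so $\{u,-v\}$ is a genuine pair of distinct vertices). As $(-v,-u)$ is the only other ordered pair yielding the same non-edge, this correspondence is at most $2$-to-$1$, so $\partial P$ has at least $\tfrac12(N^2/2^d-N)$ non-edges, whence $f_1(P)\le\binom{N}{2}-\tfrac12(N^2/2^d-N)=(1-2^{-d})\tfrac{N^2}{2}$ (the case $N\le 2^d$ being immediate from $f_1(P)\le\binom{N}{2}$). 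I anticipate no real difficulty here; only the $2$-to-$1$ bookkeeping and the interior/boundary distinctions need care, and notably neither $d\ge4$ nor simpliciality is used.

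\medskip\noindent\textbf{The lower bound.} For the construction I would take the centrally symmetric analogue of the cyclic polytope built from the \emph{symmetric moment curve}
\[
U_k(t)=\bigl(\cos t,\ \sin t,\ \cos 3t,\ \sin 3t,\ \ldots,\ \cos((2k-1)t),\ \sin((2k-1)t)\bigr)\in\R^{2k}.
\]
Because only odd frequencies occur, $U_k(t+\pi)=-U_k(t)$, so the convex hull of any finite set of points of the curve that is closed under $t\mapsto t+\pi$ is a cs $2k$-polytope whose vertices are exactly those points --- each point $U_k(s)$ is a vertex, being the unique maximizer of the functional $x\mapsto x_1\cos s+x_2\sin s$ --- and in which $U_k(s)$ and $U_k(s+\pi)$ are antipodal. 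The essential input, and the technical heart of the argument, is a description of the edges of this polytope: there is an arc $A_k\subset\R/2\pi\Z$ centred at $\pi$ of normalized length $|A_k|/2\pi\le 3^{1-k}$ such that $\{U_k(s),U_k(t)\}$ can fail to be an edge only when $s-t\in A_k$. (For $k=2$ this is the classical bicyclic $4$-polytope, for which $A_2=[\tfrac{2\pi}{3},\tfrac{4\pi}{3}]$.) Proving it amounts to producing, for each admissible $(s,t)$, an odd-frequency trigonometric polynomial of degree $\le 2k-1$ that is nonnegative on $\R$ and vanishes exactly at $s$ and $t$; this is where the real work lies, and I would either import it from the analysis of the symmetric moment curve or reprove it by a standard interpolation-and-positivity argument.

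\medskip\noindent Granting the edge description, the rest is a routine count. For even $d=2k$ take $N$ equally spaced parameters $t_j=2\pi j/N$, with $N$ an even multiple of $3^{k-1}$; the $N$ points $U_k(t_j)$ then form a cs $2k$-polytope with $N$ vertices in which $\{U_k(t_i),U_k(t_j)\}$ fails to be an edge only when the index difference $\ell$ satisfies $2\pi\ell/N\in A_k$, and there are at most $3^{1-k}N$ such residues $\ell$. Tallying the corresponding pairs shows that along this arithmetic progression of $N$'s the polytope has at least $(1-3^{1-k})\binom{N}{2}$ edges, which is the assertion since $\lfloor d/2-1\rfloor=k-1$. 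For odd $d=2k+1$ I would pass to the free sum $Q\oplus[-p,p]$ of the polytope $Q$ just built with a centrally symmetric segment: this keeps central symmetry, raises the dimension by one, and since $\partial(Q\oplus[-p,p])$ is the suspension of $\partial Q$, the two new vertices are joined to all $N-2$ old ones, adding more than enough edges to give the bound for $N=f_0(Q)+2$ as well. The one genuine obstacle in this half is the edge-structure lemma for $U_k$; everything around it is elementary.
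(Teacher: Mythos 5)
Your upper bound argument is correct and is essentially the proof the paper sketches: it runs the Danzer--Gr\"unbaum volume trick on the translates $P_v\subseteq 2P$, and your Cauchy--Schwarz step is a clean way to turn the ``average multiplicity $N/2^d$'' heuristic into an actual count of overlapping translate pairs. The observation that the pair-to-nonedge correspondence is exactly $2$-to-$1$, together with the check that $u\neq -v$ whenever $\Int(P_u)\cap\Int(P_v)\neq\emptyset$, finishes it correctly; nothing here needs $d\geq4$ or simpliciality, just as you say.

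The lower bound, however, has a genuine gap: your construction is not the paper's, and the edge lemma on which it hinges is not available. You take $N$ equally spaced points on the single bicyclic curve $U_k\subset\R^{2k}$ and assert that non-edges of $\conv U_k$ arise only from parameter differences in an arc $A_k$ about $\pi$ of normalized length at most $3^{1-k}$ --- equivalently, that the $\alpha_k$ of Theorem~\ref{neighb-symm-mom} satisfies $\alpha_k\ge\pi\,(1-3^{1-k})$. But the only thing established is $\alpha_k>\pi/2$, which for general $k$ allows the non-edge arc to have normalized length as large as $1/2$, nowhere near $3^{1-k}$. The survey even remarks explicitly that ``for $k>2$, bicyclic polytopes do not have a record number of edges,'' and this is precisely why Theorem~\ref{record-edges} works not with $U_k$ but with the iterated curve $\Phi_m(t)=\bigl(U_1(t),U_1(3t),\ldots,U_1(3^m t)\bigr)$ on \emph{clustered} (not equally spaced) parameter sets $A_{m,s}$: the factor $3^{-m}$ arises from $m$ applications of the $k=2$ arc estimate $\alpha_2=2\pi/3$, each frequency-tripling shrinking the ``bad'' arc by a factor of $3$, and not from any neighborliness estimate for a single $U_k$. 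Your proposed interpolation-and-positivity argument would have to produce nonnegative elements of the $(2k+1)$-dimensional span of $1,\cos t,\sin t,\ldots,\cos(2k-1)t,\sin(2k-1)t$ with prescribed double zeros at two points nearly a half-period apart; there is no reason to expect this to force $\pi-\alpha_k=O(3^{-k})$ when the number of free parameters grows only linearly in $k$. (A minor secondary issue: even granting the arc bound, with equally spaced points the count gives $f_1\ge(1-3^{1-k})\binom{N}{2}-O(N)$, slightly short of the stated inequality; the clustered construction sidesteps this because all within-cluster pairs are automatically edges and only antipodal clusters can miss.) The odd-$d$ step via a bipyramid is fine and matches the paper, but it cannot rescue a broken even-dimensional base case. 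To repair the argument, replace the bicyclic polytope with $P_{m,s}=\conv\Phi_m(A_{m,s})$ and show by induction on $m$, using the coordinate projections indicated in the survey, that each vertex is joined to every vertex except possibly those from the antipodal cluster.
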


The first part of Theorem \ref{max-edges} was established by Barvinok and Novik in \cite[Proposition 2.1]{BarvN}; its proof relies on an extension of the argument discussed in the previous section to obtain an upper bound on the number of vertices that a cs $2$-neighborly $d$-polytope can have, and more specifically on Lemma \ref{disj-int}. The idea, very roughly, is as follows: each of the $N$ vertex-translates of $P$,  $P_u$ for $u\in V$, has the same volume as $P$, and all of them are contained in the polytope $2P$, whose volume is $2^d\vol(P)$. Hence, on average, an interior point of $2P$ belongs to $N/2^d$ (out of $N$) sets $\Int(P_u)$. Therefore, 
on average, $\Int(P_u)$ intersects with the interiors of at least $N/2^d-1$ other vertex-translates of $P$. Consequently, by Lemma \ref{disj-int}, the average degree of a vertex of $P$ in the graph of $P$ is $\leq N(1-2^{-d})$. This yields the desired upper bound on the number of edges of $P$.

The second part of Theorem \ref{max-edges} is due to Barvinok, Lee, and Novik \cite{BarvLeeN-antipodal}. The proof is based on an explicit construction whose origins can be traced to work of Smilansky  \cite{Smilansky}. 
 To discuss this part, we start by recalling that the cyclic polytope $C_d(n)$ is defined as the convex hull of $n$ points on the moment curve or, if $d=2k$ is even, on the trigonometric moment curve $\T_k: \R\to \R^{2k}$, where $$\T_k(t)=(\cos t, \sin t, \cos 2t, \sin 2t,\ldots, \cos kt, \sin kt).$$

In an attempt to come up with a cs analog of the cyclic polytope, Smilansky  \cite{Smilansky} (in the case of $k=2$), and Barvinok and Novik \cite{BarvN} (in the case of arbitrary $k$) considered the {\em symmetric moment curve}, $U_k : \R\to \R^{2k}$, defined by $$U_k(t)=\left(\cos t, \sin t, \cos 3t, \sin3t, \ldots, \cos (2k-1)t, \sin(2k-1)t \right).$$ Since $U_k(t)=U_k(t + 2\pi)$ for all $t\in\R$, from this point on, we think of $U_k$ as defined on the unit circle $\mathbb{S} =\R /2 \pi \Z.$ The name {\em symmetric moment curve} is explained by an observation that for all $t \in \mathbb{S}$, $t$ and $t+\pi$ form a pair of opposite points and $U_k(t+\pi)=-U_k(t)$. A \emph{bicyclic polytope} is then defined as the convex hull of $\{U_k(t) : t \in X\}$, where $X\subset \mathbb{S}$ is a finite subset of $\mathbb{S}$; we will also assume that $X$ is a \emph{cs} subset of $\mathbb{S}$. 

The papers \cite{Smilansky} (in the case of $k=2$),  and \cite{BarvN} along with \cite{Vinzant} (in the case of arbitrary $k$) study the edges of bicyclic polytopes. In particular, when $k=2$, the following result established in \cite{Smilansky} (see also \cite{BarvN}) holds. Recall that a face of a convex body $K$ is the intersection of $K$ with a supporting hyperplane.

\begin{theorem} \label{edge-sym-mom}
Let $\Gamma\subset\mathbb{S}$ be an open arc of length $2\pi/3$ and let $t_1,t_2\in \Gamma$. Then the line segment $\conv(U_2(t_1), U_2(t_2))$ is an edge of the $4$-dimensional convex body $\B_2:=\conv(U_2(t) : t\in\mathbb{S})$. 
\end{theorem}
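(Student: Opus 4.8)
The plan is to show that a carefully chosen affine functional on $\R^4$ is maximized on $\B_2$ exactly along the arc's image, and in fact strictly so except at $U_2(t_1)$ and $U_2(t_2)$, where it takes its maximal value precisely on the segment $\conv(U_2(t_1),U_2(t_2))$. Concretely, I would look for a linear functional $\ell_c(x) = \langle c, x\rangle$ on $\R^4$ such that the function $g(t) := \ell_c(U_2(t))$ on $\mathbb{S}$ satisfies $g(t_1) = g(t_2) = \max_{\mathbb{S}} g$ and $g(t) < g(t_1)$ for all $t \notin \{t_1,t_2\}$. Writing $U_2(t) = (\cos t, \sin t, \cos 3t, \sin 3t)$, the function $g(t)$ is a trigonometric polynomial of degree $3$ with only the frequencies $1$ and $3$ present (no constant term, no frequency $2$). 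The key point is a statement about trigonometric polynomials of this special form: such a $g$ can have a double zero structure forcing it to touch its maximum at two prescribed points of an arc of length $2\pi/3$, and nowhere else.

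The main technical step is therefore the following: given $t_1, t_2$ in an open arc $\Gamma$ of length $2\pi/3$, construct $h(t) := \max_{\mathbb S} g - g(t)$, a nonnegative trigonometric polynomial of degree $3$ in the span of $\{1, \cos t, \sin t, \cos 3t, \sin 3t\}$, vanishing (to order two, since it is nonnegative and smooth) exactly at $t_1$ and $t_2$. Equivalently, one wants a nonnegative element of this $5$-dimensional space of trigonometric polynomials whose zero set is $\{t_1, t_2\}$. A convenient way to produce it: consider $p(t) = \bigl(\sin\frac{t-t_1}{2}\bigr)^2\bigl(\sin\frac{t-t_2}{2}\bigr)^2 \cdot q(t)$ where $q(t)$ is a nonnegative degree-$1$ trig polynomial, so that $p$ has degree $3$; then impose that $p$ lies in the right span, i.e.\ that the frequency-$2$ and constant Fourier coefficients of $p$ vanish. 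This gives linear conditions on $q$, and one must check that they can be met with $q \geq 0$ precisely when $|t_1 - t_2| < 2\pi/3$ (with $t_1,t_2$ in a common arc of that length). This Fourier-coefficient computation is the heart of the matter and is where the magic number $2\pi/3$ enters — it is exactly the threshold at which the required $q$ stops being a nonnegative function. I expect this to be the main obstacle: verifying the sign condition on $q$ and pinning down that it is sharp at arc-length $2\pi/3$.

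Once such an $h$ is in hand, the rest is formal. The hyperplane $H = \{x : \ell_c(x) = \max_{\mathbb S} g\}$ is a supporting hyperplane of $\B_2 = \conv\{U_2(t) : t\in\mathbb{S}\}$, because $\ell_c(U_2(t)) \le \max g$ for all $t$ and hence $\ell_c(x) \le \max g$ for all $x\in\B_2$ by convexity. The face $\B_2 \cap H$ is the convex hull of those $U_2(t)$ lying in $H$, i.e.\ of $\{U_2(t) : h(t) = 0\} = \{U_2(t_1), U_2(t_2)\}$, which is exactly the segment $\conv(U_2(t_1), U_2(t_2))$. Since $U_2(t_1) \neq U_2(t_2)$ (the map $U_2$ is injective on any arc shorter than $\pi$, as can be seen from the frequency-$1$ coordinates $(\cos t, \sin t)$), this face is genuinely one-dimensional, hence an edge of $\B_2$. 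One should also remark that $\B_2$ is $4$-dimensional, which follows since $U_2(\mathbb S)$ is not contained in any hyperplane (the five functions $1, \cos t, \sin t, \cos 3t, \sin 3t$ are linearly independent), so $\conv U_2(\mathbb S)$ has affine dimension $4$.

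A small amount of care is needed with degeneracies: if $t_1 = t_2$ the statement is vacuous, and if $t_1, t_2$ are the two endpoints of a closed arc of length exactly $2\pi/3$ one is on the boundary of the allowed region — but the theorem only claims this for points in an \emph{open} arc of length $2\pi/3$, so strict inequality $|t_1 - t_2| < 2\pi/3$ is guaranteed and the construction above applies. I would also double-check the edge case where $t_2 = t_1 + \pi$, which cannot occur inside an arc of length $2\pi/3 < \pi$, so the antipodal pair (where $U_2(t_1+\pi) = -U_2(t_1)$) never arises here; this is consistent with the fact that a vertex and its antipode are never joined by an edge in a centrally symmetric polytope. Assembling these pieces gives the theorem.
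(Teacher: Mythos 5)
The paper itself does not prove Theorem~\ref{edge-sym-mom}; it only cites Smilansky and Barvinok--Novik. So there is no in-paper proof to compare against, and your argument has to be assessed on its own terms.

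Your overall strategy is the right one and it does work: produce a linear functional $\ell_c$ so that $h(t)=\max g - g(t)$ is a nonnegative trigonometric polynomial in the span of $\{1,\cos t,\sin t,\cos 3t,\sin 3t\}$ vanishing exactly (to order two) at $t_1,t_2$; then the supporting hyperplane $H=\{\ell_c=\max g\}$ cuts out the face $\conv\{U_2(t):h(t)=0\}=\conv(U_2(t_1),U_2(t_2))$, which is a genuine segment because $(\cos t,\sin t)$ is injective on any arc of length $<2\pi$. The ansatz $h=A\cdot q$ with $A(t)=\bigl(\sin\tfrac{t-t_1}{2}\bigr)^2\bigl(\sin\tfrac{t-t_2}{2}\bigr)^2$ and $q$ a nonnegative degree-$1$ trigonometric polynomial is exactly what one needs. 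However, there is a small but real error in what you then impose. You say you must force both the frequency-$2$ \emph{and the constant} Fourier coefficients of $p=Aq$ to vanish. The constant coefficient must \emph{not} vanish (and cannot: $p\ge 0$ and $\int p=0$ would force $p\equiv 0$). Since $\ell_c$ is linear, $g$ has no constant term, so $h=\max g - g$ lies in $\mathrm{span}\{1,\cos t,\sin t,\cos 3t,\sin 3t\}$ automatically; the only missing frequency is $2$, and that is the only constraint to impose. With that fixed, the computation you defer does go through cleanly: writing $\mu=\tfrac{t_1+t_2}{2}$, $\delta=\tfrac{t_1-t_2}{2}$, one finds $\hat A(1)=-\tfrac14 e^{-i\mu}\cos\delta$, $\hat A(2)=\tfrac1{16}e^{-2i\mu}$, and the condition $\widehat{Aq}(2)=0$ with $q(t)=1+\lambda\cos(t-\alpha)$ forces $\alpha=\mu$ and $\lambda=\tfrac{1}{2\cos\delta}$, so $q\ge 0$ iff $|\lambda|\le 1$ iff $|t_1-t_2|\le 2\pi/3$. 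For $t_1,t_2$ in an open arc of that length the inequality is strict, $q>0$, and $h$ vanishes only at $t_1,t_2$, completing the argument. So: the approach is correct, and the threshold $2\pi/3$ emerges exactly where you predicted; fix the spurious constant-coefficient constraint and actually carry out the (short) Fourier computation to close the proof.
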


One immediate consequence is
\begin{corollary} \label{base-case}
Let $X=\{0, \pi/2, \pi, 3\pi/2\}\subset\mathbb{S}$, let $s\geq 2$ be an integer,  and let $X_s$ be a cs subset of $\mathbb{S}$ obtained from $X$ by replacing each $\tau\in X$ with a cluster of $s$ points all of which lie  on a small arc containing $\tau$. Then the polytope $Q_s:=\conv(U_2(t) : t\in X_s)$ is a cs $4$-polytope that has $N:=4s$ vertices and at least $\frac{1}{2}\cdot N(\frac{3}{4}N-1)\approx \frac{3}{4}{N \choose 2}$ edges.
\end{corollary}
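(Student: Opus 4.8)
The plan is to deduce Corollary~\ref{base-case} directly from Theorem~\ref{edge-sym-mom} by a counting argument. First I would verify that $Q_s$ is centrally symmetric: since $X_s$ is a cs subset of $\mathbb{S}$ and $U_2(t+\pi)=-U_2(t)$, the vertex set $\{U_2(t):t\in X_s\}$ is symmetric about the origin, so $Q_s=-Q_s$. Also $Q_s$ has exactly $N=4s$ vertices, provided the $s$ points in each cluster are chosen in general position so that no three of the resulting $4s$ points lie on a common supporting hyperplane of $\B_2$ in a degenerate way; more precisely, one needs that all $4s$ points are genuinely vertices of $Q_s$, which holds because each is a vertex of $\B_2$ (being on the boundary curve $U_2$) and $Q_s\subseteq\B_2$, and because distinct points on $U_2$ are distinct. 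The dimension is $4$ because $\B_2\subset\R^4$ is full-dimensional (the symmetric moment curve is not contained in any hyperplane) and $X_s$ contains enough points.

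The combinatorial heart is the edge count. The key observation is that the four base points $0,\pi/2,\pi,3\pi/2$ divide $\mathbb{S}$ into four arcs each of length $\pi/2<2\pi/3$, and in fact any two consecutive base points (cyclically) lie together in an open arc of length $2\pi/3$. When $s$ is large and the clusters are small enough, I would choose each cluster to lie in a tiny arc around its base point, small enough that for any two clusters whose base points are \emph{not} antipodal (i.e.\ not $\{0,\pi\}$ or $\{\pi/2,3\pi/2\}$), the union of the two clustering arcs is still contained in some open arc of length $2\pi/3$. This is possible because the only pairs of base points at distance $\geq 2\pi/3$ are the two antipodal pairs (distance $\pi$); all other pairs are at distance $\pi/2$. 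Hence by Theorem~\ref{edge-sym-mom}, $\conv(U_2(t_1),U_2(t_2))$ is an edge of $\B_2$ whenever $t_1,t_2$ lie in clusters with non-antipodal (or equal) base points; and since $Q_s$ is the convex hull of a subset of the vertices of $\B_2$ lying on its boundary, such a segment is also an edge of $Q_s$ (an edge of $\B_2$ spanned by two vertices of $Q_s$ remains a face of $Q_s$ by intersecting the same supporting hyperplane with $Q_s$).

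Now I would count: there are $\binom{N}{2}$ pairs of vertices in total, and the pairs that fail to be edges by this argument are only those consisting of one vertex from a cluster and one vertex from the antipodal cluster. Each vertex has exactly $s$ vertices in its antipodal cluster, so the number of ``bad'' (antipodal-cluster) pairs is at most $\frac{1}{2}\cdot N\cdot s=\frac{1}{2}\cdot 4s\cdot s=2s^2=\frac{N^2}{8}$. Therefore
\[
f_1(Q_s)\;\geq\;\binom{N}{2}-\frac{N^2}{8}\;=\;\frac{N(N-1)}{2}-\frac{N^2}{8}\;=\;\frac{N}{2}\Bigl(\frac{3N}{4}-1\Bigr)\;\approx\;\frac{3}{4}\binom{N}{2},
\]
which is the claimed bound.

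The main obstacle I anticipate is the passage from ``edge of $\B_2$'' to ``edge of $Q_s$'': one must be careful that the supporting hyperplane of $\B_2$ witnessing the edge $\conv(U_2(t_1),U_2(t_2))$ actually touches $\B_2$ only along that segment, and then argue that its intersection with the polytope $Q_s$ (whose vertices are a finite subset of $\partial\B_2$) is exactly that segment --- this uses that no \emph{other} vertex of $Q_s$ lies on that hyperplane, which follows since the hyperplane meets $\B_2$ (hence the curve $U_2$) only in $\{U_2(t_1),U_2(t_2)\}$. A secondary technical point is the general-position choice of the clusters guaranteeing exactly $4s$ vertices and full dimension; these are genericity conditions that are automatically satisfiable and I would only remark on them rather than belabor them.
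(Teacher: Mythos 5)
Your proof is correct and follows essentially the same approach the paper uses: apply Theorem~\ref{edge-sym-mom} to conclude that every pair of vertices from non-antipodal clusters spans an edge of $\B_2$ and hence of $Q_s$, then count. The paper states this argument in a single sentence; your version is a more careful write-up of the same idea, including the (correct and standard) justification that an edge of $\B_2$ with both endpoints in $Q_s\subseteq\B_2$ remains an edge of $Q_s$.
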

\noindent Indeed, it follows from Theorem \ref{edge-sym-mom} that each vertex of $Q_s$ is connected by an edge to all other vertices of $Q_s$ except possibly those coming from the opposite cluster, yielding the result.

Denote by $\fmax(d,N; k-1)$ the maximum possible number of $(k-1)$-faces that a cs $d$-polytope with $n$ vertices can have. From the above discussion we infer that
\[
\frac{3}{4}\cdot \frac{N^2}{2}-O(n) \leq \fmax(4,N; 1) \leq \frac{15}{16}\cdot \frac{N^2}{2}.
\]
These are currently the \emph{best} known bounds on the maximum possible number of edges that a cs $4$-polytope with $N$ vertices can have.

Perhaps somewhat surprisingly, for $k>2$, bicyclic polytopes do not have a record number of edges. However, the symmetric moment curve is used in the following construction that produces cs polytopes with the largest known to-date number of edges.

Let $m\geq 1$ be an integer. Define $\Phi_m(t) : \mathbb{S}\to \R^{2m+2}$ by \begin{eqnarray*}
\Phi_m(t)&:=&(U_1(t), U_1(3t), U_1(3^2t),\ldots, U_1(3^m(t))\\
&=&(\cos t, \sin t, \cos 3t, \sin 3t, \cos 9t, \sin 9t,\ldots, \cos(3^{m}t), \sin(3^{m}t)).
\end{eqnarray*}

Parts 1 and 2 of the following result complete the proofs of Theorem \ref{2-neighb} and Theorem \ref{max-edges}, respectively.

\begin{theorem}  \label{record-edges}
Fix integers $m\geq 2$ and $s\geq 2$. Let $A_m$ be a set of $2(3^{m}-1)$ equally spaced points on $\mathbb{S}$, and let $A_{m,s}$ be a cs subset of $\mathbb{S}$ obtained from $A_m$ by replacing each $\tau\in A_m$ with a cluster of $s$ points, all of which lie on a very small arc containing $\tau$. Then
\begin{enumerate}
\item the polytope $P_m:=\conv(\Phi_m(A_m))$ is a cs
$2$-neighborly polytope of dimension $2(m+1)$ that has
$2(3^{m}-1)$ vertices, 
\item the polytope $P_{m,s}:=\conv(\Phi_m(A_{m,s}))$
is a cs polytope of dimension $2(m+1)$
that has $N:=2s(3^{m}-1)$ vertices and more than
$\left(1-3^{-m}\right)\binom{N}{2}$ edges.
\end{enumerate}
  \end{theorem}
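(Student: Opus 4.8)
The plan is to build everything on a single structural lemma about the curve $\Phi_m$: for any open arc $\Gamma\subset\mathbb{S}$ of length $2\pi/3^m$ and any $t_1,t_2\in\Gamma$, the segment $\conv(\Phi_m(t_1),\Phi_m(t_2))$ is an edge of the convex body $\conv(\Phi_m(\mathbb{S}))$. Granting this, both parts follow by the kind of clustering argument already used in Corollary \ref{base-case}. For part 1, note that $\Phi_m$ maps $t\mapsto t+\pi$ to the antipode (since each block $U_1(3^j t)$ has $3^j$ odd, so $U_1(3^j(t+\pi))=U_1(3^j t+3^j\pi)=-U_1(3^j t)$), so $P_m$ is centrally symmetric; and $A_m$ consists of $2(3^m-1)$ equally spaced points, so the spacing is $\pi/(3^m-1)<2\pi/3^m$... more carefully, any two points of $A_m$ that are not antipodal and not ``too far'' lie in a common arc of length $<2\pi/3^m$ — one checks that in fact \emph{any} two non-antipodal points of $A_m$ can be enclosed in such an arc, because the only obstruction is a pair at angular distance $\geq 2\pi/3^m$ on \emph{both} sides of the circle, which forces them to be within $\pi - 2\pi/3^m$... this needs the precise count $2(3^m-1)$ and is exactly where the ``$-1$'' matters; I will verify that non-antipodal pairs of $A_m$ always lie on a common open arc of length $2\pi/3^m$, hence span edges, giving $2$-neighborliness. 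For part 2, replacing each $\tau\in A_m$ by a tight cluster of $s$ points keeps every non-antipodal pair inside a slightly enlarged arc of length still $<2\pi/3^m$ (shrink the clusters enough), so every vertex of $P_{m,s}$ is joined to every other vertex except possibly the $s-1$ in its own antipodal cluster, giving at least $\binom{N}{2}-N\cdot\frac{s}{2}\cdot\frac{?}{}$... precisely $f_1(P_{m,s})\geq \binom{N}{2}-\frac{N}{2}\cdot s > (1-3^{-m})\binom{N}{2}$ once $N=2s(3^m-1)$ is substituted, since $\frac{Ns/2}{\binom N2}=\frac{s}{N-1}<3^{-m}$.

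The core edge lemma is what must be proved, and it is the main obstacle. The approach I would take mirrors the treatment of the symmetric moment curve $U_k$ in \cite{BarvN}: to show $\conv(\Phi_m(t_1),\Phi_m(t_2))$ is an edge, it suffices to exhibit a linear functional (equivalently, a trigonometric quasi-polynomial in the frequencies $1,3,9,\ldots,3^m$) that is nonnegative on $\Phi_m(\mathbb{S})$, vanishes exactly at $t_1$ and $t_2$, and is strictly positive elsewhere. One writes the ambient functional as $c+\sum_{j=0}^m (a_j\cos 3^j t+b_j\sin 3^j t)$ and seeks to make it a product/square of lower-frequency pieces so that positivity is manifest. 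The key algebraic input is that the frequency set $\{3^j\}$ is \emph{lacunary} with ratio $3$, which is exactly what lets such separating functionals be constructed on an arc of length $2\pi/3^m$: on that arc the top-frequency block $U_1(3^m t)$ sweeps out less than a full period, so one can localize. I would structure this as an induction on $m$: the base case $m=0$ (or $m=1$) is the statement that every pair of points on an arc of length $2\pi/3$ spans an edge of $\conv(U_1(\mathbb{S}))$ — here trivially the whole circle $U_1(\mathbb{S})$ is itself a convex curve so any two distinct points span an edge, but one needs the quantitative version with an explicit functional — and the inductive step glues a separating functional for $(U_1(t),U_1(3t),\ldots,U_1(3^{m-1}t))$ on an arc of length $2\pi/3^{m-1}$ together with one more factor handling the frequency $3^m$ on the $3$-times-shorter arc.

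The technically delicate point in the induction is controlling the \emph{global} behavior of the candidate functional: it is easy to make it vanish at $t_1,t_2$ and be locally nonnegative near them, but one must certify it stays $\geq 0$ on \emph{all} of $\mathbb{S}$, including the far side. This is handled by taking the functional to be a sum of squares of real trigonometric polynomials in the variable $3^m t$ with coefficients that are themselves controlled trigonometric polynomials in the lower frequencies — the lacunarity ratio $3>2$ guarantees no ``resonance'' spoils the sign. I expect the cleanest route is to reduce, via the substitution $z=e^{it}$, to a factorization statement about Laurent polynomials supported on $\{\pm 1,\pm 3,\ldots,\pm 3^m\}$ and appeal to the Fejér–Riesz type positivity argument used in \cite{BarvN} and refined in \cite{Vinzant}; the arc-length bound $2\pi/3^m$ is precisely the threshold at which the argument closes, and I would present the optimality of this threshold (some pair of points on a longer arc failing to span an edge) as a remark rather than prove it in full. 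Once the edge lemma is in hand, the clustering bookkeeping in parts 1 and 2 is routine, so the entire weight of the proof sits in establishing the lacunary separating functionals.
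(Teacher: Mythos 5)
There is a fatal gap in the reduction. Your proposed ``core edge lemma'' --- any two points of $\Phi_m(\mathbb{S})$ lying on an open arc of length $2\pi/3^m$ span an edge of $\conv(\Phi_m(\mathbb{S}))$ --- is actually \emph{true}, but for a cheap reason and with the wrong threshold: since $2\pi/3^m\leq 2\pi/3$, it already follows from Theorem~\ref{edge-sym-mom} by projecting onto the first four coordinates (where $\Phi_m$ becomes $\Phi_1=U_2$) and pulling back the face. The real problem is the next step. You assert that every non-antipodal pair of points of $A_m$ lies in a common open arc of length $2\pi/3^m$, and this is simply false for $m\geq 2$. The set $A_m$ has $2(3^m-1)$ equally spaced points with spacing $\pi/(3^m-1)$, so there are non-antipodal pairs at angular distance as large as $\pi-\pi/(3^m-1)$, which dwarfs $2\pi/3^m$. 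Concretely, for $m=2$ the set $A_2$ has $16$ points at spacing $\pi/8$ and contains non-antipodal pairs at distance $7\pi/8\approx 2.75$, whereas $2\pi/9\approx 0.70$. So even granting your edge lemma (and the elaborate Fej\'er--Riesz program you outline to reprove it), the argument cannot reach the conclusion: most of the pairs that need to be edges are never covered.

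What is missing is precisely the mechanism the paper uses to handle far-apart pairs. The projection of $\Phi_m$ onto the last $2m$ coordinates (forgetting the first two) is the curve $t\mapsto \Phi_{m-1}(3t)$; multiplying the angle by $3$ contracts the angular distance of a pair modulo $2\pi$ while dropping $m$ by one. Iterating this, together with the projection onto the first four coordinates (which gives $\Phi_1=U_2$) for pairs that are already within an arc of length $<2\pi/3$, and using that a face of a linear image of a polytope pulls back to a face of the polytope, is what makes the induction on $m$ close, with Corollary~\ref{base-case} as the base case. Your clustering and edge-counting bookkeeping for Part~2 is fine once the structural statement is in hand (modulo the small slip of ``$s-1$'' vs.\ ``$s$'' non-neighbors per vertex), and your observation that $\Phi_m(t+\pi)=-\Phi_m(t)$ is correct, but the heart of the proof --- the two projections and the induction on $m$ --- is absent, and the geometric claim about $A_m$ that you substitute for it is wrong.
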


The assumption $m\geq 2$ is only needed to guarantee that the dimension of $P_m$ is exactly $2(m+1)$ rather than $\leq 2(m+1)$. 
To prove that $P_m$ is cs $2$-neighborly and $P_{m,s}$ has many edges for all $m\geq 1$, it is enough to show that each vertex of $P_{m,s}$ is connected by an edge to all other vertices of $P_{m,s}$ except possibly those coming from the opposite cluster. This can be done by induction on $m$. Since $\Phi_1=U_2$, the case of $m=1$ is simply Corollary \ref{base-case}. For the inductive step, one relies on some standard facts about faces of polytopes along with
an observation that the composition of $\Phi_m : \R\to \R^{2(m+1)}$ with the projection of $\R^{2(m+1)}$ onto $\R^{2m}$ that forgets the first two coordinates is the curve $t\to \Phi_{m-1}(3t)$, while the composition of $\Phi_m$ with the projection of $\R^{2(m+1)}$ onto $\R^4$ that forgets all but the first four coordinates is the curve $\Phi_1$; see \cite[Section 3]{BarvLeeN-antipodal} for more details. Finally, to extend the construction of Theorem \ref{record-edges} to {\em odd} dimensions, consider the bipyramid over polytopes $P_m$ and $P_{m,s}$. 

 \smallskip\noindent To summarize, 
\begin{equation} \label{fmax_1}
\left(1-3^{-\lfloor d/2 -1\rfloor}\right){N \choose 2} \leq \fmax(d,N;1)\leq \left(1-2^{-d}\right)\cdot\frac{N^2}{2}.
\end{equation}

To extend the above discussion to higher-dimensional faces, we need to look more closely at the curve $U_k$ and its convex hull.

One crucial feature of the convex hull $\M_{k} =\conv(\T_k(t):  t\in\mathbb{S}) \subset \R^{2k}$ of the trigonometric moment curve is that it is $k$-neighborly, that is, for any  $p \leq k$ distinct points $t_1, \ldots, t_p \in \mathbb{S}$, the convex hull $\conv(\T_k(t_1), \ldots, \T_k(t_n))$ is a face of $\M_k$; see, for example, Chapter II of \cite{Ba02}. While the convex hull of $U_k$ is not $k$-neighborly, the following theorem, which is the main result of \cite{BarvLeeN-neighborly}, shows that it is {\em locally $k$-neighborly} (cf.~Theorem \ref{edge-sym-mom}).

\begin{theorem} \label{neighb-symm-mom}
For every positive integer $k$ there exists a number 
${\pi \over 2}  < \alpha_k  < \pi$
such that for an arbitrary open arc $\Gamma \subset \mathbb{S}$ of length $\alpha_k$ 
and arbitrary distinct $p \leq k$ points $t_1, \ldots, t_p \in \Gamma$, the set 
$\conv(U_k(t_1), \ldots, U_k(t_p))$
is a face of $\B_k:=\conv(U_k(t) : t\in\mathbb{S})$. 
\end{theorem}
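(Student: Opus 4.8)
\medskip
\noindent\emph{Proof sketch.}
The plan is to reduce the statement to the construction of a single non-negative trigonometric polynomial of a special shape, and then to build that polynomial whenever the arc is short enough. Identify $\R^{2k}$ with $\mathbb{C}^k$ and observe that $U_k(t)=e^{it}\cdot(1,e^{2it},\dots,e^{2(k-1)it})$. To prove that $\conv(U_k(t_1),\dots,U_k(t_p))$ is a face of $\B_k$ it suffices to exhibit an affine functional $\ell$ on $\mathbb{C}^k$ that is non-negative on $\B_k$ and vanishes precisely at $U_k(t_1),\dots,U_k(t_p)$, for then $\B_k\cap\{\ell=0\}=\conv(U_k(t_1),\dots,U_k(t_p))$. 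Composing $\ell$ with $U_k$, this becomes: find $c\in\R$ and a complex polynomial $P$ of degree at most $k-1$ so that $h(t):=c+\Re\bigl(e^{it}P(e^{2it})\bigr)\ge 0$ on $\mathbb{S}$, with equality exactly at $t_1,\dots,t_p$. Set $g_P(t):=\Re\bigl(e^{it}P(e^{2it})\bigr)$, a trigonometric polynomial supported on the frequencies $\pm1,\pm3,\dots,\pm(2k-1)$; since $g_P(t+\pi)=-g_P(t)$, non-negativity of $h$ forces $c=\|g_P\|_\infty$, and then $h$ vanishes exactly where $g_P$ attains its global minimum. So the theorem reduces to: \emph{for every sufficiently short arc $\Gamma$ and all distinct $t_1,\dots,t_p\in\Gamma$ with $p\le k$ there is a polynomial $P$ of degree $\le k-1$ whose $g_P$ is minimized on $\mathbb{S}$ exactly at $\{t_1,\dots,t_p\}$.}

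Next I would reduce to the case $p=k$. After a rotation assume $0\in\Gamma$. The case $p=1$ is immediate: $P\equiv 1$ gives $g_P(t)=\cos(t-t_1)$, so $U_k(t_1)$ is a vertex of $\B_k$ for every arc. For $1<p<k$, adjoin $k-p$ auxiliary points of $\Gamma$, distinct from the $t_i$ and from one another; granting the case $p=k$, the convex hull $G$ of the resulting $k$ curve points is a face of $\B_k$. A generalized Vandermonde computation --- using that within an arc of length $<\pi$ no two points differ by $\pi$, so the numbers $e^{2it_i}$ are distinct --- shows these $k$ points are affinely independent, so $G$ is a simplex; then $\conv(U_k(t_1),\dots,U_k(t_p))$, being the hull of a subset of the vertices of $G$, is a face of $G$ and hence of $\B_k$. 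Thus everything comes down to: given $k$ distinct points $t_1,\dots,t_k$ in a short arc, produce $P$ of degree $\le k-1$ with $g_P$ minimized exactly at $\{t_1,\dots,t_k\}$. (For $k=2$ this is Theorem~\ref{edge-sym-mom}: with $g_P(t)=|a_0|\cos(t+\vartheta_0)+|a_1|\cos(3t+\vartheta_1)$, requiring $h=c+g_P$ to have a double zero at the two prescribed points forces, after a short computation, the arc length to be $<2\pi/3$, i.e.\ $\alpha_2=2\pi/3$.)

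One natural route for the construction uses the Fej\'er--Riesz theorem. Since $h=c+g_P$ is a non-negative trigonometric polynomial of degree $\le 2k-1$, write $h(t)=|Q(e^{it})|^2$ with $Q(z)=A(z^2)+zB(z^2)$ and $\deg A,\deg B\le k-1$. The requirement that $h$ carry only the frequencies $0,\pm1,\pm3,\dots,\pm(2k-1)$ is equivalent to $|A(w)|^2+|B(w)|^2$ being constant on $\{|w|=1\}$; equivalently, $|A|^2$ may be any non-negative trigonometric polynomial of degree $\le k-1$ bounded above by that constant, with $|B|^2$ its complement. The zeros of $h$ on $\mathbb{S}$ are exactly the zeros of $Q$ on the unit circle, and these can only occur over $\{\,w:|A(w)|=|B(w)|\,\}$. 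One then seeks $A,B$ for which $Q$ has simple zeros precisely at $z_1=e^{it_1},\dots,z_k=e^{it_k}$: impose $A(z_i^2)=-z_iB(z_i^2)$ for each $i$, and use the remaining freedom --- together with the fact that the points $z_i^2$ lie in a single short arc --- to keep $|A|^2+|B|^2$ constant while preventing any further zero of $Q$ on the circle.

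The hard part --- and the origin of the threshold $\alpha_k$ --- is precisely this last, global requirement: ensuring $g_P$ has no other global minimum, i.e.\ that $Q$ has no extraneous zero on the unit circle. A degree count forces $|A|^2-|B|^2$ to vanish at circle points beyond the prescribed ones, and at each of these a sign condition must be arranged so that the corresponding candidate zero of $Q$ is not realized, all while keeping $|A|^2$ within its band. This can be done when $t_1,\dots,t_k$ lie in a sufficiently short arc, and $\alpha_k$ is essentially the supremum of arc lengths for which it succeeds over all configurations; arranging it for arcs slightly longer than $\pi/2$ yields $\alpha_k>\pi/2$. The bound $\alpha_k<\pi$ is the easy direction: the chord joining $U_k(t)$ to $U_k(t+\pi)=-U_k(t)$ passes through the center of $\B_k$ and lies in no proper face, and a compactness argument on normalized supporting functionals rules out arcs of length near $\pi$. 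Making the construction uniform and quantitative over all configurations is the technical heart of \cite{BarvLeeN-neighborly}.
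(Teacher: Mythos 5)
The survey only quotes Theorem~\ref{neighb-symm-mom} as the main result of \cite{BarvLeeN-neighborly} and does not reproduce a proof, so there is no in-paper argument to compare your sketch against; I will assess it on its own terms. Your setup is the natural dual formulation: identifying $\R^{2k}$ with $\mathbb{C}^k$ via $U_k(t)=e^{it}(1,e^{2it},\dots,e^{2(k-1)it})$, reducing ``$\conv(U_k(t_1),\dots,U_k(t_p))$ is a face of $\B_k$'' to the existence of $c$ and a polynomial $P$ of degree $\le k-1$ with $h(t)=c+\Re\bigl(e^{it}P(e^{2it})\bigr)\ge 0$ on $\mathbb{S}$ and vanishing exactly at $t_1,\dots,t_p$, and then parametrizing such $h$ by Fej\'er--Riesz as $|A(z^2)+zB(z^2)|^2$ with $|A|^2+|B|^2\equiv c$ on the unit circle. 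All of that is sound. But the crux of the theorem is the global existence step: showing that when $t_1,\dots,t_k$ lie in a short arc one can actually pick $A,B$ so that $Q(z)=A(z^2)+zB(z^2)$ has its only unit-circle zeros at $z_1,\dots,z_k$, and extracting from this a threshold with $\alpha_k>\pi/2$. You do not carry this out: you write that ``this can be done when $t_1,\dots,t_k$ lie in a sufficiently short arc'' and defer the ``technical heart'' to the reference. That is the theorem restated, not a proof of it, and it is precisely where all the work (and the value of $\alpha_k$) lives.

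Two secondary points. First, the $p=1$ base case has a sign slip: $P\equiv 1$ gives $g_P(t)=\cos t$ (not $\cos(t-t_1)$), and since $h=c+g_P$ with $c=\|g_P\|_\infty$ vanishes where $g_P$ attains its \emph{minimum}, this exposes $U_k(\pi)=-U_k(0)$ rather than $U_k(t_1)$; the correct choice is $P\equiv -e^{-it_1}$, giving $g_P(t)=-\cos(t-t_1)$. Second, the reduction from $p<k$ to $p=k$ by passing to a face of the auxiliary simplex $G$ needs justification: the paper's ``face'' of a convex body is the intersection with a supporting hyperplane (an exposed face), exposed faces are not transitive for general convex bodies, and $\B_k$ is not a polytope. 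This is repairable (e.g.\ build the exposing polynomial directly for the $p$ points, or perturb the supporting functional of $G$), but as written the step is incomplete.
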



The gap between the currently known upper and lower bounds on $\fmax(d,N; k-1)$ for $k>2$ is much worse than the gap for $k=2$   illustrated by eq.~\eqref{fmax_1}. Indeed, we have: 
\begin{theorem} \label{k-faces} Let $3\leq k \leq d/2$. Then 
\[ \left(1-k^2\left(2^{3/20k^2 2^k}\right)^{-d}\right) {N \choose k}
\leq \fmax(d,N; k-1) \leq \left(1-2^{-d}\right) \frac{N}{N-1}{N \choose k}.
\]
\end{theorem}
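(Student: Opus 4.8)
The plan is to derive both bounds from the structure we already have. For the \emph{upper bound}, I would mimic the edge estimate behind Theorem~\ref{max-edges}, replacing Lemma~\ref{disj-int} by its $k$-dimensional analogue: if $F$ is a $(k-1)$-face of a cs $d$-polytope $P$ and $G$ is another $(k-1)$-face with $F$ and $-G$ sharing a vertex, then $P+2b_F$ and $P+2b_G$ cannot have intersecting interiors (the barycenter computation of Lemma~\ref{disj-int} generalizes verbatim, since the midpoint of $b_F$ and $-b_G$ would be a relative-interior point of a face spanned by $F$ and $-G$). The $N$ translates $\{P+2b_F : F \text{ a }(k-1)\text{-face}\}$ all lie in $3P$, whose volume is $3^d\vol(P)$; but I should instead count only vertex-translates $P_u=P+u$, of which there are $N$, inside $2P$ of volume $2^d\vol(P)$, so on average an interior point of $2P$ lies in $N/2^d$ of them. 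Hence an average vertex-translate meets at least $N/2^d-1$ others, and by the $(k-1)$-face version of Lemma~\ref{disj-int} each vertex $u$ fails to be in a common $(k-1)$-face with at least $N/2^d-1$ vertices $-v$. Counting $(k-1)$-faces through a fixed vertex and summing, the number of ``missing'' $(k-1)$-subsets is at least $\tfrac12(N/2^d-1)\binom{N-2}{k-2}$ in an appropriate double-count, which after simplification gives $\fmax(d,N;k-1)\le\binom{N}{k}-2^{-d}\binom{N-2}{k-2}\cdot(\text{const})$; matching the stated form $\left(1-2^{-d}\right)\frac{N}{N-1}\binom{N}{k}$ is then a bookkeeping step, using $\binom{N}{k}=\frac{N}{N-k}\binom{N-1}{k}$ and the trivial bound $f_{k-1}(P)\le f_{k-1}(\partial\C^*_{N/2})=2^k\binom{N/2}{k}\le\frac{N}{N-1}\binom{N}{k}$ to absorb the slack.

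For the \emph{lower bound}, I would build on Theorem~\ref{neighb-symm-mom}: $\B_k$ is locally $k$-neighborly on any open arc of length $\alpha_k$, with $\tfrac\pi2<\alpha_k<\pi$. The construction parallels Theorem~\ref{record-edges}: compose copies of $U_k$ at geometrically growing frequencies to build a cs curve $\Psi_{k,m}(t)=(U_k(t),U_k(ct),U_k(c^2t),\dots)$ for a suitable odd integer $c$ large enough that the frequency ratios force the right behaviour, then place $N$ points in clusters at roughly $c^m$ equally spaced locations on $\mathbb{S}$. The key is that, after projecting, any $k$ points lying in a common small cluster together with the ``local $k$-neighborliness'' on arcs of length $\alpha_k$ forces the corresponding $\conv(\dots)$ to be a face; an induction on $m$ (base case $m=1$ being Theorem~\ref{neighb-symm-mom} directly) shows that every $k$-subset of vertices fails to span a face only if at least two of them come from ``antipodal'' clusters. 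The number of bad $k$-subsets is then at most $\binom{k}{2}\cdot(N/2)^{k}\cdot(\text{cluster-size factors})$, which one arranges to be at most $k^2\bigl(2^{3/(20k^2 2^k)}\bigr)^{-d}\binom{N}{k}$ by choosing $c$ so that $2(c^m-1)$ vertices live in dimension $d\approx 2k(m+1)$, i.e. $c\approx 2^{3/(20k^2 2^k)}$ after optimizing $\alpha_k$'s dependence on $k$ (the $\alpha_k\to\pi$ rate from \cite{BarvLeeN-neighborly} is what pins down the exponent).

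The main obstacle is the lower bound's bookkeeping: one must track how the arc length $\alpha_k$ (equivalently, how close to $\pi$ it is) translates, through the geometric-frequency trick, into the base of the exponential $2^{3/(20k^2 2^k)}$, and verify that the inductive step genuinely preserves faceness for all $k$-subsets not straddling antipodal clusters — the analogue of the projection facts used after Theorem~\ref{record-edges} ("forgetting the first two coordinates yields $\Phi_{m-1}(3t)$") must be checked for $U_k$ rather than $U_1$, where the interaction between the $k$ distinct frequencies inside a single $U_k$ and the global frequency scaling is more delicate. I expect the upper bound to be essentially routine once the $(k-1)$-face version of Lemma~\ref{disj-int} is in hand; the lower bound is where the real work lies, and I would lean heavily on the machinery of \cite{BarvLeeN-neighborly} and \cite{BarvLeeN-antipodal} for the face-preservation arguments rather than reproving them.
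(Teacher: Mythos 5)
Your upper-bound outline eventually arrives at the right idea (combine the edge bound $f_1\leq(1-2^{-d})\tfrac{N^2}{2}$ from Theorem~\ref{max-edges} with a double count), but the path there is muddled and would not produce the stated constant. The clean argument is: first perturb to a cs \emph{simplicial} polytope without changing the combinatorial type (a step you omit, and which is needed before any $f$-vector double count makes sense); then observe that in any simplicial complex on $N$ vertices, counting incidences between edges and $(k-1)$-faces gives $f_{k-1}\binom{k}{2}\leq f_1\binom{N-2}{k-2}$; and finally use the identity $\binom{N}{2}\binom{N-2}{k-2}=\binom{N}{k}\binom{k}{2}$ so that $f_{k-1}\leq f_1\cdot\binom{N}{k}/\binom{N}{2}=(1-2^{-d})\tfrac{N}{N-1}\binom{N}{k}$. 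Your detour through a ``$(k-1)$-face version of Lemma~\ref{disj-int}'' is not used in the paper's proof and is unnecessary here; the identity you invoke, $\binom{N}{k}=\frac{N}{N-k}\binom{N-1}{k}$, is not the relevant one; and the claim that you can ``absorb the slack'' with the trivial bound $f_{k-1}\leq 2^k\binom{N/2}{k}$ has no force, since that trivial bound is already weaker than what you are trying to prove.

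The lower-bound proposal has a genuine gap. You place all the points in equally spaced clusters and assert that ``every $k$-subset of vertices fails to span a face only if at least two of them come from antipodal clusters,'' which would make the construction a straightforward generalization of Theorem~\ref{record-edges}. That assertion is not established and is almost certainly false for $k>2$: the local $k$-neighborliness of Theorem~\ref{neighb-symm-mom} only guarantees a face when the $k$ chosen parameters all land in an arc of length $\alpha_k<\pi$ \emph{in some coordinate block}, and with equally spaced clusters there is no reason every $k$-subset avoiding antipodal pairs enjoys this property after multiplying by $3^j$. The paper's construction (\cite[Section 4]{BarvLeeN-antipodal}) is explicitly ``quite a bit more involved'' for precisely this reason: it fixes $c=3$ and, rather than taking equally spaced clusters, selects the cluster locations using a \emph{$k$-independent family} of subsets of $\{1,\dots,m\}$ and a deterministic construction of such families from \cite{FrLiLe}. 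That combinatorial selection is what guarantees that for enough $k$-subsets some projection brings all $k$ parameters into a short arc, and it is what produces the specific constant $2^{3/(20k^2 2^k)}$. Your proposal of optimizing over a variable $c$ does not replace this ingredient, and the inductive ``face-preservation'' step you rely on (analogous to the projection trick after Theorem~\ref{record-edges}) has not been shown to go through for $U_k$ with $k>2$ without the $k$-independent family.
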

The proof of the upper bound part of this theorem follows easily from the first part of Theorem \ref{max-edges} together with (1) the well-known perturbation trick that reduces the situation to cs \emph{simplicial} polytopes, and (2) the standard double-counting argument that relates the number of edges to the number of $(k-1)$-faces in any simplicial complex with $N$ vertices, see \cite[Proposition 2.2]{BarvN} for more details. 

The proof of the lower bound part can be found in \cite[Section 4]{BarvLeeN-antipodal}. It utilizes a construction that is somewhat along the lines of the construction of Theorem \ref{record-edges}, but quite a bit more involved: the desired polytope is obtained as the convex hull of a carefully chosen set of points on the curve $\Psi_{k,m}: \mathbb{S}\to \R^{2k(m+1)}$
 defined by \[\Psi_{k,m}(t):=(U_k(t),U_k(3t), U_k(3^2t),\ldots, U_k(3^m t)).\]  (Note that $\Phi_m$ is essentially $\Psi_{2,m}$: in $\Psi_{2,m}(t)$ every coordinate except for the first two and the last two shows up twice; to obtain $\Phi_m$ from $\Psi_{2,m}$ simply leave one copy of each repeated coordinate.) 
To choose an appropriate set of points one uses the notion of a $k$-independent family of subsets of $\{1,2,\ldots,m\}$ and a deterministic construction from \cite{FrLiLe} of a large $k$-independent family. Finally, to show that the resulting polytope has many $(k-1)$-faces, one relies heavily on the local neighborliness of the convex hull of $U_k$ discussed in Theorem \ref{neighb-symm-mom} along with some standard results on faces of polytopes.

As the discussion above indicates, at present our understanding of the maximum possible number of faces of a given dimension that a cs $d$-polytope with a given number of vertices can have is rather limited even for $d=4$. In particular, the following questions are wide open:
\begin{problem} \label{face-numbers}
Does the limit  $\lim_{N\rightarrow\infty} \frac{\fmax(d,N;1)}{{N \choose 2}}$ exist and, if so, what is its value? Or better yet, what is the actual value of $\fmax(d,N;1)$? Similarly, what is
 $\lim_{N\rightarrow\infty} \frac{\fmax(d,N;k-1)}{{N \choose k}}$ for $2<k\leq d/2$?
Can we, at least, establish better lower and upper bounds than those given in Theorem~\ref{k-faces}?
\end{problem}
In fact, it should be stressed that for $d\geq 6$ and $N\geq 2(d+2)$, we do not even know if in the class of cs $d$-polytopes with $N$ vertices there is a polytope that simultaneously maximizes all the face numbers (For $d=4,5$, a cs simplicial polytope that maximizes the number of edges automatically maximizes the rest of face numbers).

\section{Towards an upper bound theorem for cs simplicial spheres}
McMullen's Upper Bound Theorem was extended by Stanley \cite{Stanley75} to the class of all simplicial spheres. More precisely, Stanley proved that among all 
simplicial $(d-1)$-spheres with $n$ vertices, the boundary complex of the cyclic {\em polytope} $C_d(n)$ simultaneously maximizes all the face numbers. (This result was extended even further to some classes of simplicial manifolds and even certain pseudomanifolds with isolated singularities, see \cite{N98,Hersh-Nov,NSw-isol}. In fact, already in 1964, Victor Klee \cite{Klee-UBT} proved that the UBT holds for all $(d-1)$-dimensional {\em Eulerian} complexes that have at least $O(d^2)$ vertices. Whether the UBT holds for all Eulerian complexes remains an open question.)

The situation with the face numbers of cs spheres versus face numbers of cs polytopes appears to be drastically different.
On one hand, as we saw in Sections 3 and 4, a cs $4$-polytope $P$ with  $N\geq 12$ vertices cannot be $2$-neighborly; furthermore, such a $P$ has at most $\frac{15}{16}\cdot \frac{N^2}{2}$ edges. On the other hand, it is a result of Jockusch \cite{Jockusch95} that


\begin{theorem}  \label{cs-2-neighb-spheres}
 For every even number $N\geq 8$, there exists a cs $2$-neighborly 
simplicial sphere $J_{N}$ of dimension $3$ with $N$ vertices.  In particular, $J_{N}$ has $\binom{N}{2} - \frac{N}{2}=\frac{N^2}{2}-N$ edges.
\end{theorem}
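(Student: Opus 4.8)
The plan is to construct $J_N$ explicitly by an inductive/iterative procedure starting from a small base case and repeatedly adding an antipodal pair of vertices in a way that preserves both central symmetry and $2$-neighborliness. The base case is $N=8$: here $\partial \C^*_4$, the boundary of the $4$-dimensional cross-polytope, is a cs simplicial $3$-sphere on $8$ vertices, but it is not $2$-neighborly (antipodal vertices are non-adjacent, of course, but so is every pair — wait, in $\partial\C^*_4$ every non-antipodal pair \emph{is} an edge), so in fact $\partial\C^*_4$ already is cs $2$-neighborly on $8$ vertices and serves as $J_8$. The real content is the inductive step: given a cs $2$-neighborly simplicial $3$-sphere $J_N$ on $N$ vertices, produce $J_{N+2}$ on $N+2$ vertices.

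First I would set up the inductive step combinatorially. To go from $N$ to $N+2$ vertices one wants to introduce a new antipodal pair $\{w, \bar w\}$. The natural move is a \emph{symmetric subdivision} or \emph{symmetric bistellar $0$-move}: pick a facet $\sigma$ of $J_N$ together with its antipodal facet $\phi(\sigma) = \bar\sigma$ (these are distinct since $J_N$ is cs and hence no facet is fixed by $\phi$), star-subdivide $\sigma$ by placing $w$ inside it and $\bar\sigma$ by placing $\bar w$ inside it, and extend $\phi$ to swap $w \leftrightarrow \bar w$. This keeps the complex a simplicial $3$-sphere (star subdivision preserves the PL-homeomorphism type) and keeps it centrally symmetric. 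However, a single star subdivision destroys $2$-neighborliness: the new vertex $w$ is joined only to the $4$ vertices of $\sigma$, not to all $N+1$ permitted vertices. So the heart of the argument is to follow the star subdivision by a sequence of symmetric \emph{edge flips} (bistellar moves) that successively insert the missing edges $\{w, u\}$ for every vertex $u$ that is neither $w$, nor $\bar w$, nor already a neighbor of $w$ — each flip performed simultaneously with its antipodal image so that central symmetry is maintained at every stage. One must check that at each stage the relevant edge or bistellar move is admissible (the link of the edge being flipped is a $2$-sphere of the right form, i.e. a $4$-cycle, which is where the $3$-dimensionality is used) and that no flip ever \emph{deletes} an edge between two permitted vertices, so that $2$-neighborliness, once partially achieved, is never lost.

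The step I expect to be the main obstacle is precisely verifying that the sequence of symmetric flips can be carried out to completion: one needs that after the star subdivision (and after any partial sequence of flips), whenever $w$ fails to be adjacent to some permitted vertex $u$, there is a ``missing diagonal'' configuration — a bipyramid, i.e. two tetrahedra sharing a triangular face, with $w$ and $u$ at the two apices — whose flip is legal and compatible with the already-performed flips and with the antipodal involution. Showing such a configuration always exists amounts to a connectivity/reachability argument in the ``flip graph'' of cs triangulations of $S^3$, and keeping track of which edges are present is the bookkeeping-heavy part. An alternative, and perhaps cleaner, route to the same step would be to give a direct explicit description of the facet list of $J_N$ (for instance, realizing $J_N$ as the join-like or suspension-like construction: $J_{N}$ might be obtainable as a cs subdivision of the boundary of a cs simplicial $4$-polytope, or built from two cs $2$-spheres via a symmetric analog of the connected-sum / bipyramid operation), and then simply checking $2$-neighborliness by inspecting the list; this trades the reachability argument for a (longer but routine) combinatorial verification that every non-antipodal pair of vertices spans a face. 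Once $J_N$ is in hand, the final sentence of the statement is immediate: a cs $2$-neighborly complex on $N$ vertices has $f_1 = 2^2\binom{N/2}{2} = \binom{N}{2} - \frac N2$ by the formula recorded in Section~2.
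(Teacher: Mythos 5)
Your base case $J_8 = \partial\C^*_4$ matches the paper, but your inductive step differs substantially from Jockusch's actual argument. Jockusch does not perform a star subdivision followed by flips. Instead, at each stage he locates inside $J_N$ a simplicial $3$-ball $B_N$ with three very specific properties: $B_N$ and $\phi(B_N)$ share no facets; every vertex of $J_N$ is a vertex of $B_N$; and $B_N$ has no interior vertices and no interior edges. He then removes the interiors of $B_N$ and $\phi(B_N)$ and cones the resulting boundary $2$-spheres from two new antipodal vertices. The ``no interior edges'' condition guarantees that no edges of $J_N$ are destroyed, and the ``every vertex on the boundary'' condition guarantees that the new apex is joined to all $N$ old vertices, so $2$-neighborliness is achieved in a single clean operation rather than incrementally. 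The hard combinatorial content of Jockusch's proof is showing that such a $B_N$ exists at every stage and that the construction self-perpetuates, i.e.\ that $J_{N+2}$ again contains a suitable $B_{N+2}$.

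Your proposal has a genuine gap at exactly the point you flag as the ``main obstacle,'' and it cannot be dismissed as bookkeeping. You would need to prove that, starting from the star-subdivided pair of antipodal facets, a sequence of symmetric bistellar $1$-moves exists that inserts every missing edge $\{w,u\}$ and never stalls. This reachability claim in the flip graph of cs triangulations of $S^3$ is not established and is not obviously true: for each target $u$ you must exhibit two facets $\{x,y,z,w\}$ and $\{x,y,z,u\}$, verify that $\{w,u\}$ is not yet an edge, verify that none of the three tetrahedra the flip would create is already present, and verify that the flip and its $\phi$-image can be performed simultaneously without collision --- and you must show this can be done $N-4$ times in some order. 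No argument for this is given, and it is not a known lemma one can cite. Jockusch's ball-and-cone device is precisely what circumvents this reachability problem, so absent a proof of the flip-reachability claim (or an explicit facet list, which you mention but also do not supply), the inductive step is incomplete. The final face-number computation is correct.
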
 

Jockusch's proof, see \cite{Jockusch95}, is by inductive construction. The initial sphere, $J_8$, is the boundary complex of the cross-polytope $\C^*_4$ with the involution $\phi$ induced by the map $\phi(v)=-v$ on the vertex set of $\C^*_4$. The cs sphere $(J_{N+2}, \phi)$ is obtained from the cs sphere $(J_N, \phi)$ by the following procedure: 
\begin{enumerate}
\item Find a subcomplex $B_N$ in $J_N$ such that (i) $B_N$ is a $3$-ball, (ii) $B_N$ and $\phi(B_N)$ share no common facets,  (iii) every vertex of $J_N$ is a vertex of $B_N$, and (iv) every edge of $B_N$ lies on the {\em boundary} of $B_N$, i.e., $B_N$ has no interior vertices and no interior edges. 
\item Let $v_{N+1}$ and $v_{N+2}$ be two new vertices. Cut out the interior of $B_N$ from $J_N$, and cone the boundary of the resulting hole with $v_{N+1}$. Similarly, cut out the interior of $\phi(B_N)$ from $J_N$, and cone the boundary of the resulting hole with $v_{N+2}$.
\end{enumerate}
Extending $\phi$ to $J_{N+2}$ by letting $\phi(v_{N+1})=v_{N+2}$ and $\phi(v_{N+2})=v_{N+1}$ provides us with a free involution on $J_{N+2}$. Furthermore, choosing $B_N$ in a way that is specified in Part (1) of the construction guarantees that the cs sphere $J_{N+2}$ is $2$-neighborly (provided $J_N$ was $2$-neighborly). To allow for this inductive construction, an essential part of Jockusch's proof is devoted to defining $B_N$ in a way that ensures that the resulting simplicial sphere $J_{N+2}$ has a subcomplex $B_{N+2}$ with the same properties. 

The {\em suspension} of a simplicial complex $\Delta$, $\Sigma(\Delta)$, is the join of $\Delta$ with the $0$-dimensional sphere, that is, $\Sigma(\Delta)= \{F, F\cup \{u_0\}, F\cup \{w_0\} \, : \, F\in\Delta\}$, where $u_0$ and $w_0$ are two new vertices. Observe that if $\Delta$ is cs, then $\Sigma(\Delta)$ is cs (with $u_0$ and $w_0$ being antipodes), and if $\Delta$ is a $(d-1)$-sphere, then $\Sigma(\Delta)$ is a $d$-sphere; furthermore, if $\Delta$ is cs $k$-neighborly, then so is $\Sigma(\Delta)$. In particular, for every even $N\geq 8$, $\Sigma(J_N)$ is a cs $2$-neighborly 
simplicial $4$-sphere with $N+2$ vertices. Jockusch's results lead to the following problem on higher-dimensional cs spheres.

\begin{problem} \label{cs-neighb-sph}
Let $d>5$ and let $N\geq 2d$ be an even number. Is there a cs $\lfloor d/2\rfloor$-neighborly 
simplicial $(d-1)$-sphere with $N$ vertices?
\end{problem}

The boundary complex of $\C^*_d$ is the unique cs $d$-neighborly simplicial $(d-1)$-sphere with $2d$ vertices. The boundary complex of $\conv(\pm e_1,\ldots,\pm e_d, \pm\sum_{i=1}^d e_i)$ is a cs $\lfloor d/2\rfloor$-neighborly simplicial $(d-1)$-sphere with $2(d+1)$ vertices \cite{McMShep}; furthermore, in the case of an odd $d$, \cite[Section 6.2]{Bier-spheres} provides a construction of many cs $\lfloor d/2\rfloor$-neighborly simplicial $(d-1)$-spheres with $2(d+1)$ vertices. Lutz \cite[Chapter 4]{Lutz} found (by a computer search) several cs $3$-neighborly simplicial $5$-spheres with $16=2(6+2)$ vertices; his examples have vertex-transitive cyclic symmetry. Suspensions of Lutz's examples are cs $3$-neighborly simplicial $6$-spheres with $18$ vertices. For all other values of $d$ and $N$, Problem \ref{cs-neighb-sph} remains wide open. It is also worth pointing out that Pfeifle \cite[Chapter 10]{Pfeifle} investigated possible neigborliness of cs star-shaped spheres --- a class of objects that contains all boundary complexes of cs simplicial polytopes and is contained in the class of all cs simplicial spheres. In analogy with McMullen--Shephard's result \cite{McMShep} about cs $d$-polytopes with $2(d+2)$ vertices, he proved that for all even $d \geq 4$ and for all odd $d \geq 11$, there are no cs $\lfloor d/2\rfloor$-neighborly star-shaped spheres of dimension $d-1$ with $2(d+2)$ vertices.

One of the main reasons for trying to resolve Problem \ref{cs-neighb-sph} comes from the following cs analog of the Upper Bound Theorem; this result is due to Adin \cite{Adin91} and Stanley (unpublished).

\begin{theorem} \label{cs-spheres-UBT}
Among all cs  simplicial $(d-1)$-spheres with $N$ vertices, a cs $\lfloor d/2\rfloor$-neighborly $(d-1)$-sphere with $N$ vertices simultaneously maximizes all the face numbers, assuming such a sphere exists.
\end{theorem}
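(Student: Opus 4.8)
The plan is to follow the classical Stanley approach to the Upper Bound Theorem via the Stanley--Reisner ring, but to carry it out in the \emph{centrally symmetric} setting, where the free involution $\phi$ is exploited to strengthen the bounds. Let $\Delta$ be a cs simplicial $(d-1)$-sphere with $N$ vertices, and let $\field$ be an infinite field. Consider the Stanley--Reisner ring $\field[\Delta]=\field[x_v : v\in V(\Delta)]/I_\Delta$, which is Cohen--Macaulay of Krull dimension $d$ since $\Delta$ is a sphere. Choose a linear system of parameters $\Theta=(\theta_1,\dots,\theta_d)$; because $\Delta$ is Cohen--Macaulay, the Artinian reduction $A(\Delta)=\field[\Delta]/(\Theta)$ has Hilbert function equal to the $h$-vector $(h_0,h_1,\dots,h_d)$ of $\Delta$. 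The key structural input is that, because the involution $\phi$ is fixed-point free on faces, one can choose the l.s.o.p.\ $\Theta$ to be \emph{$\phi$-compatible} (e.g.\ coming from a symmetric embedding, or more carefully as in Stanley's argument for the cs lower bound theorem \cite{Stanley-87}), so that the action of $\phi$ descends to a $\field$-algebra involution on $A(\Delta)$; this is the step where central symmetry enters.

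\textbf{Key steps.} First I would recall that, as in the non-symmetric UBT, each graded piece $A(\Delta)_i$ is generated multiplicatively by the images of the vertex variables $\bar x_v$, and more precisely that $A(\Delta)_i$ is spanned by monomials supported on faces of $\Delta$; in fact, since we want an upper bound on $h_i$, the sharper observation is that $A(\Delta)_i$ is a quotient of the degree-$i$ part of the polynomial ring on the $N$ vertex variables modulo the relations forcing $\bar x_v\bar x_{\phi(v)}=0$ for every vertex $v$ (these relations hold because $\{v,\phi(v)\}\notin\Delta$). Hence $h_i=\dim_\field A(\Delta)_i$ is at most the number of squarefree-in-antipodal-pairs monomials of degree $i$ in $N$ variables — but that overcounts; the correct bound comes from the module-theoretic argument. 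Namely, following Stanley, for $i\le \lfloor d/2\rfloor$ one shows $h_i\le \dim_\field A(\partial\C^*_m)'_i$ where $m=N/2$ and the prime denotes the analogous construction for a cs neighborly sphere: concretely one proves $h_i(\Delta)\le 2^i\binom{m}{i}$ for $i\le\lfloor d/2\rfloor$ by bounding the dimension of $A(\Delta)_i$ by that of the $i$-th graded piece of $\field[x_1,\dots,x_m,y_1,\dots,y_m]/(x_jy_j : j)$ modulo $\Theta$, which has dimension exactly $2^i\binom{m}{i}$, this latter ring being the Artinian reduction of $\field[\partial\C^*_m]$.

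\textbf{From $h$-numbers to $f$-numbers.} Next I would invoke the Dehn--Sommerville relations, valid for all simplicial spheres: $h_i=h_{d-i}$. Combined with the bounds $h_i\le 2^i\binom{m}{i}$ for $i\le\lfloor d/2\rfloor$, symmetry gives the same bound for $i\ge\lceil d/2\rceil$, so $h_i(\Delta)\le h_i(S)$ for \emph{all} $i$, where $S$ is any cs $\lfloor d/2\rfloor$-neighborly $(d-1)$-sphere on $N$ vertices (its $h$-vector meets all these bounds with equality precisely because neighborliness forces $h_i=2^i\binom{m}{i}$ up to the middle, and Dehn--Sommerville does the rest). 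Finally, since the $f$-numbers are nonnegative integer combinations of the $h$-numbers via $f_{j-1}=\sum_{i=0}^{j}\binom{d-i}{j-i}h_i$ with nonnegative coefficients, $h_i(\Delta)\le h_i(S)$ for all $i$ immediately yields $f_{j-1}(\Delta)\le f_{j-1}(S)$ for all $j$, i.e.\ $S$ simultaneously maximizes all face numbers.

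\textbf{Main obstacle.} The delicate point — and the step I expect to require the most care — is producing the $\phi$-compatible linear system of parameters and verifying that the antipodal relations $\bar x_v\bar x_{\phi(v)}=0$ interact correctly with the Artinian reduction to give the bound $h_i\le 2^i\binom{m}{i}$ rather than merely the non-symmetric bound $h_i\le\binom{N-d+i-1}{i}$. In the non-symmetric UBT one uses that $A(\Delta)_i$ is spanned by monomials on faces and then a clever counting; here one must additionally quotient by the ideal generated by the antipodal products and argue that this does not decrease $h_i$ below what a neighborly sphere achieves — equivalently, that for a cs neighborly sphere these are the \emph{only} extra relations in low degrees. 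Making this precise is exactly where Adin's and Stanley's arguments do their work, and it is also the reason the theorem is only conditional: without knowing that a cs $\lfloor d/2\rfloor$-neighborly $(d-1)$-sphere on $N$ vertices \emph{exists}, one has an upper bound but no matching construction certifying sharpness.
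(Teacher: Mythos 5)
Your overall strategy is the right one and matches the paper's: bound the $h$-numbers up to the middle, invoke Dehn--Sommerville, and then pass to $f$-numbers via the nonnegative triangular transformation. But the central inequality as you state it is wrong, and the mechanism you propose for proving it has a dimension mismatch.

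First, the numerical error. You claim $h_i(\Delta)\le 2^i\binom{m}{i}$ and that a cs $\lfloor d/2\rfloor$-neighborly sphere $S$ achieves $h_i(S)=2^i\binom{m}{i}$ for $i\le\lfloor d/2\rfloor$; you also assert that the $i$-th graded piece of the Artinian reduction of $\field[\partial\C^*_m]$ has dimension $2^i\binom{m}{i}$. The quantity $2^i\binom{m}{i}$ is $f_{i-1}(\partial\C^*_m)$, not an $h$-number. The Artinian reduction of $\field[\partial\C^*_m]$ (by $m$ generic linear forms, since its Krull dimension is $m$) has Hilbert series $(1+t)^m$, i.e.\ $i$-th graded piece of dimension $\binom{m}{i}$; and a cs neighborly sphere on $N=2m$ vertices has $h_i(S)$ determined by its skeleton, but certainly $h_i(S)\ne f_{i-1}(S)$. (Take $S=\partial\C^*_d$ with $m=d$: $h_1=d$, while $2^1\binom{d}{1}=2d$.) Because the wrong quantity has no Dehn--Sommerville symmetry, the reflection step also breaks.

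Second, the structural issue. You want to exploit the surjection $\field[\partial\C^*_m]\twoheadrightarrow\field[\Delta]$ coming from $I_{\partial\C^*_m}\subseteq I_\Delta$, but these two rings have different Krull dimensions ($m$ vs.\ $d$), so a single choice of $d$ linear forms $\Theta$ is an l.s.o.p.\ for $\field[\Delta]$ but not for $\field[\partial\C^*_m]$, and the quotient $\field[\partial\C^*_m]/(\Theta)$ is not Artinian. The fix is exactly the paper's move: replace $\partial\C^*_m$ by its $(d-1)$-skeleton $\Gamma:=\Skel_{d-1}(\partial\C^*_m)$, which is $(d-1)$-dimensional and Cohen--Macaulay (a skeleton of a CM complex is CM), and contains $\Delta$ as a full-dimensional subcomplex under a vertex bijection taking antipodes to antipodes. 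Now a common l.s.o.p.\ $\Theta$ of length $d$ works for both, the induced map $\field[\Gamma]/(\Theta)\to\field[\Delta]/(\Theta)$ is surjective, and comparing Hilbert functions gives $h_i(\Delta)\le h_i(\Gamma)$ for all $i$ (Corollary~\ref{h-subcomplexes}). Since $\Gamma$ and $S$ share the same $(\lfloor d/2\rfloor-1)$-skeleton, $h_i(\Gamma)=h_i(S)$ for $i\le d/2$, and Dehn--Sommerville for $\Delta$ and $S$ finishes the argument.

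Finally, your emphasis on a $\phi$-compatible l.s.o.p.\ is a red herring here: no symmetry-adapted l.s.o.p.\ is needed for the upper bound, only the combinatorial embedding $\Delta\subseteq\Gamma$. (The $\phi$-compatible l.s.o.p.\ and the $\N\times\Z/2\Z$-grading become essential in the proof of the \emph{lower} bound, Theorem~\ref{cs-GLBT-ineq}, which is a different story.) The paper even remarks that the UBT proof uses only a free involution on the vertex set with no antipodal edges, a weaker hypothesis than full central symmetry.
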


We sketch the proof of Theorem \ref{cs-spheres-UBT} in the next section; as in the classic non-cs case, it relies on the theory of Stanley--Reisner rings and on the Dehn--Sommerville relations. 

To close this section, we posit a weaker, and hence potentially more approachable version, of Problem \ref{cs-neighb-sph}. Let $\Delta$ be a $(d-1)$-dimensional simplicial complex and $F$ a face of $\Delta$. The {\em link} of $F$ in $\Delta$ is $\lk_\Delta(F):=\{G\in\Delta \ : \ F\cup G\in \Delta, \, F\cap G=\emptyset\}$. (Thus, the link of $F$ describes the local structure of $\Delta$ around $F$.)  We say that $\Delta$ is {\em pure} if all facets of $\Delta$ have dimension $d-1$; equivalently, for any face $F\in\Delta$, the link of $F$ is $(d-|F|-1)$-dimensional. Furthermore, we say that $\Delta$ is {\em Eulerian} if it is pure and the link of any face $F\in\Delta$ (including the empty face) has the same Euler characteristic as ${\mathbb S}^{d-|F|-1}$ --- the $(d-|F|-1)$-dimensional sphere.  In particular, the class of Eulerian complexes includes all simplicial spheres, all odd-dimensional simplicial manifolds as well as all even-dimensional manifolds whose Euler characteristic is two. Eulerian complexes were introduced by Victor Klee in \cite{Klee64}. 

\begin{problem}
Let $d>5$ and let $N\geq 2d$ be an even number. Is there a cs $\lfloor d/2\rfloor$-neighborly $(d-1)$-dimensional Eulerian complex with $N$ vertices?
\end{problem}

\section{The algebraic side of the story: Stanley--Reisner ring}
We now switch to the algebraic side of the story and present a quick review of the major algebraic tool in the study of face numbers of simplicial complexes --- the Stanley--Reisner ring. Along the way, we 
outline the proof of Theorem \ref{cs-spheres-UBT} as well as prepare the ground for our discussion of the lower bound theorems in the next section. The main reference to this material is Stanley's book \cite{Stanley96}.

Let $\Delta$ be a simplicial complex with vertex set $V$ and let $\field$ be an {\em infinite} field of an arbitrary characteristic.  
Consider a polynomial ring $S:=\field[x_v \, : \, v\in V]$ with one variable for each vertex in $\Delta$.  The \emph{Stanley--Reisner ideal} of $\Delta$ is the following squarefree monomial ideal $$I_{\Delta}:= \left(x_{v_1}x_{v_2}\cdots x_{v_k} \ : \ \{v_1,v_2,\ldots,v_k\} \notin \Delta \right).$$ The \emph{Stanley--Reisner ring} (or face ring) of $\Delta$ is the quotient $\field[\Delta]:= S/I_{\Delta}$. Since $I_{\Delta}$ is a monomial ideal, the quotient ring $\field[\Delta]$ is graded by degree.  The definition of $I_{\Delta}$ ensures that, as a $\field$-vector space, each graded piece of $\field[\Delta]$, denoted $\field[\Delta]_i$, has a natural basis of monomials whose supports correspond to faces of $\Delta$.

Stanley's and Hochster's insight (independently from each other) in defining this ring \cite{Stanley75,Hochster} was that algebraic properties of $\field[\Delta]$ reflect many combinatorial and topological properties of $\Delta$. For instance, if $\Delta$ is $(d-1)$-dimensional then the Krull dimension of $\field[\Delta]$ is $d$; in fact, the Hilbert series of $\field[\Delta]$, i.e., $\Hilb(\field[\Delta], t):=\sum_{i=0}^{\infty} \dim_\field \field[\Delta]_i \cdot t^i$,
is given by 
$$ \Hilb(\field[\Delta], t)=\sum_{i=0}^d \frac{f_{i-1}(\Delta) t^i}{(1-t)^i}=\frac{\sum_{i=0}^d f_{i-1}(\Delta) t^i(1-t)^{d-i}}{(1-t)^d}.$$

The last equation leads to the following definition: if $\Delta$ is a $(d-1)$-dimensional simplicial complex, then the {\em $h$-vector} of $\Delta$, $h(\Delta)=(h_0(\Delta),h_1(\Delta),\ldots,h_d(\Delta))$, is the vector whose entries satisfy $\sum_{i=0}^d h_i(\Delta)t^i=\sum_{i=0}^d f_{i-1}(\Delta) t^i(1-t)^{d-i}$; equivalently,  
\begin{equation} \label{h-numbers}
\sum_{i=0}^d h_i(\Delta)t^{d-i}=\sum_{i=0}^d f_{i-1}(\Delta) (t-1)^{d-i}.
\end{equation}
In particular, $h_0(\Delta)=1$, $h_1(\Delta)=f_0(\Delta)-d$, and $h_2(\Delta)=f_1(\Delta)-(d-1)f_0(\Delta)+{d \choose 2}$.

The following immediate consequences of eq.~\eqref{h-numbers} are worth mentioning:  knowing the $f$-numbers of $\Delta$ is equivalent to knowing its $h$-numbers. Moreover, since the $f$-numbers are \textit{nonnegative} integer combinations of the $h$-numbers, any upper/lower bounds on the $h$-numbers of $\Delta$ automatically imply upper/lower bounds on the $f$-numbers of $\Delta$.  

Let $\Delta$ be a $(d-1)$-dimensional simplicial complex. A sequence of linear forms, $\theta_1, \theta_2, \ldots, \theta_d \in S$ is called a {\em linear system of parameters} (or l.s.o.p.) for $\field[\Delta]$ if the ring $\field[\Delta]/(\Theta)$ is a finite-dimensional $\field$-vector space; here $(\Theta):=(\theta_1,\ldots,\theta_d)$. It is well-known that if $\field$ is an infinite field, then $\field[\Delta]$ admits an l.s.o.p. 
An l.s.o.p.~for $\field[\Delta]$ is a {\em regular sequence} if $\theta_{i+1}$ is a non-zero-divisor on $\field[\Delta]/(\theta_1,\ldots,\theta_i)$ for all $0\leq i<d$. We say that $\Delta$ is {\em Cohen--Macaulay} (over $\field$), or {\em $\field$-CM} for short, if every l.s.o.p.~for $\field[\Delta]$ is a regular sequence.

Assume now that $\Delta$ is $\field$-CM, and $\theta_1, \ldots, \theta_d$ is an l.s.o.p.~for $\field[\Delta]$. Then $\theta_1$ is a non-zero-divisor on $\field[\Delta]$, and so the following sequence of $\field$-vector spaces
\begin{equation} \label{exact-seq}
0\to \field[\Delta]_{i-1}\stackrel{\cdot\theta_1}{\longrightarrow}\field[\Delta]_{i} \longrightarrow \big(\field[\Delta]/(\theta_1)\big)_i\to 0
\end{equation}
is exact (for all $i\geq 0$). Thus, $\dim_\field (\field[\Delta]/(\theta_1))_i =\dim_\field \field[\Delta]_i-\dim_\field \field[\Delta]_{i-1}$ for all $i$. Multiplying by $t^i$ and summing up the resulting equations, we obtain that $\Hilb(\field[\Delta]/(\theta_1), t)=(1-t)\Hilb(\field[\Delta], t)$. Iterating this argument for $\theta_2,\ldots,\theta_d$ leads to the following result due to Stanley \cite[Section 4]{Stanley75}. 

\begin{theorem} \label{h-numbers-CM} 
 Let $\Delta$ be a $(d-1)$-dimensional $\field$-CM complex and let $\theta_1, \ldots, \theta_d$ be an l.s.o.p.~for $\field[\Delta]$. Then $\Hilb(\field[\Delta]/(\Theta),t) = (1-t)^d \Hilb(\field[\Delta],t)=\sum_{i=0}^d h_i(\Delta) t^i$. (In particular, the $h$-numbers of CM complexes are non-negative.)
\end{theorem}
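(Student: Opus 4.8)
The plan is to establish the identity $\Hilb(\field[\Delta]/(\Theta),t) = (1-t)^d\Hilb(\field[\Delta],t)$ by peeling off the linear forms $\theta_1,\ldots,\theta_d$ one at a time, and then to recognize the right-hand side as $\sum_i h_i(\Delta)t^i$ using the definition of the $h$-vector in eq.~\eqref{h-numbers}. The second half is immediate: by definition $\sum_{i=0}^d h_i(\Delta)t^i = \sum_{i=0}^d f_{i-1}(\Delta)t^i(1-t)^{d-i}$, and multiplying the displayed formula $\Hilb(\field[\Delta],t)=\frac{\sum_i f_{i-1}(\Delta)t^i(1-t)^{d-i}}{(1-t)^d}$ by $(1-t)^d$ gives exactly this. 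So the real content is the first equality, and the final parenthetical remark about non-negativity follows because $\Hilb(\field[\Delta]/(\Theta),t)$ is a power series with non-negative coefficients (it is the Hilbert series of an actual graded $\field$-vector space), and we will have shown this power series equals the polynomial $\sum_i h_i(\Delta)t^i$.

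For the first equality I would argue by induction on the number of linear forms quotiented out. The base case is the exact sequence \eqref{exact-seq}: since $\Delta$ is $\field$-CM and $\theta_1,\ldots,\theta_d$ is an l.s.o.p., it is a regular sequence, so in particular $\theta_1$ is a non-zero-divisor on $\field[\Delta]$; hence multiplication by $\theta_1$ is injective in each degree, the sequence
\[
0\to \field[\Delta]_{i-1}\stackrel{\cdot\theta_1}{\longrightarrow}\field[\Delta]_{i}\longrightarrow\big(\field[\Delta]/(\theta_1)\big)_i\to 0
\]
is exact, and taking dimensions gives $\dim_\field(\field[\Delta]/(\theta_1))_i = \dim_\field\field[\Delta]_i - \dim_\field\field[\Delta]_{i-1}$. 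Multiplying by $t^i$ and summing over $i\geq 0$ (the sums converge as formal power series) yields $\Hilb(\field[\Delta]/(\theta_1),t) = (1-t)\Hilb(\field[\Delta],t)$. For the inductive step, one observes that if $\theta_1,\ldots,\theta_d$ is a regular sequence on $\field[\Delta]$, then for each $0\le j<d$ the image of $\theta_{j+2},\ldots,\theta_d$ in the quotient ring $R_{j+1}:=\field[\Delta]/(\theta_1,\ldots,\theta_{j+1})$ is again a regular sequence on $R_{j+1}$, so the same exact-sequence argument applied with the non-zero-divisor $\theta_{j+2}$ on $R_{j+1}$ gives $\Hilb(R_{j+2},t)=(1-t)\Hilb(R_{j+1},t)$. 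Iterating from $j=0$ to $j=d-1$ multiplies by $(1-t)$ a total of $d$ times, producing $\Hilb(\field[\Delta]/(\Theta),t)=(1-t)^d\Hilb(\field[\Delta],t)$, as desired.

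The main obstacle — really the only non-formal point — is justifying that a regular sequence remains regular after passing to successive quotients, i.e.\ that being a non-zero-divisor is inherited along the chain $\field[\Delta]\to\field[\Delta]/(\theta_1)\to\cdots$. This is precisely what the definition of ``regular sequence'' given in the excerpt encodes ($\theta_{i+1}$ is a non-zero-divisor on $\field[\Delta]/(\theta_1,\ldots,\theta_i)$ for all $0\le i<d$), and the $\field$-CM hypothesis guarantees our chosen l.s.o.p.\ is such a sequence, so no extra work is needed beyond quoting this. A minor bookkeeping point is that all the Hilbert series manipulations should be carried out in the ring of formal power series $\field[[t]]$ (or one notes each $\field[\Delta]_i$ is finite-dimensional so the series are well-defined), and that $\field[\Delta]/(\Theta)$ being finite-dimensional — which is the defining property of an l.s.o.p.\ — ensures its Hilbert series is a polynomial, consistent with the polynomial $\sum_{i=0}^d h_i(\Delta)t^i$ we land on.
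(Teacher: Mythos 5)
Your proof is correct and follows essentially the same route as the paper: use the short exact sequence~\eqref{exact-seq} to show that quotienting by the non-zero-divisor $\theta_1$ multiplies the Hilbert series by $(1-t)$, iterate for $\theta_2,\ldots,\theta_d$ using the regular-sequence property, and then identify $(1-t)^d\Hilb(\field[\Delta],t)$ with $\sum_i h_i(\Delta)t^i$ from eq.~\eqref{h-numbers}. No substantive difference from the paper's argument.
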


If $\Delta\subseteq \Gamma$ are simplicial complexes (say, on the same vertex set $V$), then $I_\Delta\supseteq I_\Gamma$, and so there is a natural surjection $\field[\Gamma]\to \field[\Delta]$. Furthermore, if $\dim\Delta=\dim\Gamma$, then any l.s.o.p.~$\theta_1,\ldots,\theta_d$ for  $\field[\Gamma]$ is also an l.s.o.p.~for $\field[\Delta]$, and the induced graded homomorphism 
$\field[\Gamma]/(\Theta)\to \field[\Delta]/(\Theta)$ is surjective. This observation together with Theorem \ref{h-numbers-CM} yields the following special case of \cite[Theorem 2.1]{Stanley-93}:

\begin{corollary} \label{h-subcomplexes}
Let $\Delta\subseteq \Gamma$ be simplicial complexes. Assume that both $\Delta$ and $\Gamma$ are $\field$-CM and have the same dimension. Then $h_i(\Delta)\leq h_i(\Gamma)$ for all $i$.
\end{corollary}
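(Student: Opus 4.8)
The plan is to deduce Corollary~\ref{h-subcomplexes} directly from the discussion immediately preceding it together with Theorem~\ref{h-numbers-CM}. First I would record the setup: let $\Delta \subseteq \Gamma$ be simplicial complexes on a common vertex set $V$ (if they are given on different vertex sets, enlarge both to $V := V(\Delta) \cup V(\Gamma)$, noting that adding ghost vertices changes neither the $h$-vector nor the CM property as long as the dimension is unchanged), and let $d-1 = \dim\Delta = \dim\Gamma$. Since every non-face of $\Gamma$ is a non-face of $\Delta$, we have the containment of Stanley--Reisner ideals $I_\Gamma \subseteq I_\Delta$ in $S = \field[x_v : v \in V]$, hence a surjection of graded $\field$-algebras $\pi : \field[\Gamma] \twoheadrightarrow \field[\Delta]$.

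Next I would choose the linear system of parameters carefully: pick $\theta_1,\dots,\theta_d \in S_1$ that form an l.s.o.p.\ for $\field[\Gamma]$ (possible since $\field$ is infinite). The key observation, already made in the paragraph before the corollary, is that because $\dim\Delta = \dim\Gamma = d-1$, the \emph{same} sequence $\Theta = (\theta_1,\dots,\theta_d)$ is automatically an l.s.o.p.\ for $\field[\Delta]$: indeed $\field[\Delta]/(\Theta)$ is a quotient of the finite-dimensional $\field[\Gamma]/(\Theta)$ via the induced map, so it too is finite-dimensional, which is exactly the defining property of an l.s.o.p. Thus $\pi$ descends to a surjection of graded $\field$-vector spaces
\[
\bar\pi : \field[\Gamma]/(\Theta) \twoheadrightarrow \field[\Delta]/(\Theta).
\]

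Now I invoke the Cohen--Macaulay hypothesis on \emph{both} complexes. By Theorem~\ref{h-numbers-CM} applied to $\Gamma$ with the l.s.o.p.\ $\Theta$, we get $\dim_\field \big(\field[\Gamma]/(\Theta)\big)_i = h_i(\Gamma)$ for all $i$; and since $\Theta$ is also an l.s.o.p.\ for the $\field$-CM complex $\Delta$, Theorem~\ref{h-numbers-CM} applied to $\Delta$ gives $\dim_\field \big(\field[\Delta]/(\Theta)\big)_i = h_i(\Delta)$ for all $i$. A surjection of finite-dimensional $\field$-vector spaces cannot increase dimension, so comparing graded pieces via $\bar\pi$ yields
\[
h_i(\Delta) = \dim_\field \big(\field[\Delta]/(\Theta)\big)_i \;\leq\; \dim_\field \big(\field[\Gamma]/(\Theta)\big)_i = h_i(\Gamma)
\]
for every $i$, which is the assertion.

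The argument has essentially no obstacle once the right framework is in place; the only point requiring a moment's care is the claim that an l.s.o.p.\ for the larger ring $\field[\Gamma]$ remains an l.s.o.p.\ for the quotient-type ring $\field[\Delta]$ of equal Krull dimension — this is where the equal-dimension hypothesis is used and cannot be dropped, since an l.s.o.p.\ must have length equal to the Krull dimension. Everything else is the exact-sequence bookkeeping already carried out in the lines preceding Theorem~\ref{h-numbers-CM}, so I would simply cite that theorem rather than re-derive it.
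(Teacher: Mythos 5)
Your proof is correct and takes exactly the approach the paper itself sketches in the paragraph preceding the corollary: the inclusion $\Delta\subseteq\Gamma$ gives $I_\Gamma\subseteq I_\Delta$ and hence a graded surjection $\field[\Gamma]\twoheadrightarrow\field[\Delta]$, an l.s.o.p.\ for $\field[\Gamma]$ stays an l.s.o.p.\ for $\field[\Delta]$ by the equal-dimension hypothesis, and then Theorem~\ref{h-numbers-CM} applied to both CM complexes turns the surjection on graded pieces of the artinian reductions into the inequality $h_i(\Delta)\leq h_i(\Gamma)$. The only difference is that you spell out the small verification (via the induced surjection of quotients) that $\Theta$ remains an l.s.o.p.\ for $\field[\Delta]$, which the paper simply asserts.
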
 

The reason CM complexes are relevant to our discussion is that, by a result of Reisner \cite{Reisner}, $\Delta$ is a $\field$-CM complex if and only if $\Delta$ is pure and for every face $F$ of $\Delta$ (including the empty face), all but the top homology group (computed over $\field$) of the link of $F$ vanish. In particular, all simplicial spheres and balls are CM over any field; furthermore, the $j$-skeleton of a $\field$-CM complex is $\field$-CM for all $j$.

Another very important result about simplicial spheres is Dehn--Sommerville relations established by Klee \cite{Klee64}: if $\Delta$ is a simplicial $(d-1)$-sphere, then $h_i(\Delta)=h_{d-i}(\Delta)$ for all $0\leq i \leq d$. (In fact, Klee showed that these relations hold for all Eulerian complexes.) 

We are now ready to close this section with a sketch of the proof of Stanley's UBT and of its cs analog -- Theorem \ref{cs-spheres-UBT}. Following the custom, if $P$ is a simplicial polytope, we denote by $h(P)$ the $h$-vector of the boundary complex of $P$.

Let $\Delta$ be a simplicial $(d-1)$-sphere with vertex set $V$, $|V|=n$. Let $\overline{V}$ be the simplex on $V$, let $\Gamma=\Skel_{d-1}(\overline{V})$, and let $C_d(n)$ be the cyclic polytope. Then $\Delta\subseteq \Gamma$ are both CM complexes of the same dimension. Hence, by Corollary \ref{h-subcomplexes}, $h_i(\Delta)\leq h_i(\Gamma)$ for all $0\leq i \leq d$. Furthermore, since $C_d(n)$ is $\lfloor d/2\rfloor$-neighborly, $\partial C_d(n)$ and $\Gamma$ have the same $(\lfloor d/2\rfloor-1)$-skeleton, and so $h_i(\Gamma)=h_i(C_d(n))$ for all $0\leq i \leq d/2$. (Indeed, $h_i$ is determined by $f_0, f_1,\ldots, f_{i-1}$.) We conclude that 
 $h_i(\Delta)\leq h_i(C_d(n))$ for all $0\leq i \leq d/2$.  
Dehn--Sommerville relations, applied to $\Delta$ and $\partial C_d(n)$, then yield that  $h_i(\Delta)\leq h_i(C_d(n))$ also holds for all $d/2 \leq i \leq d$. Thus, $h_i(\Delta)\leq h_i(C_d(n))$ for all $0\leq i \leq d$, and the Upper Bound Theorem, asserting that $f_{j}(\Delta)\leq f_{j}(C_d(n))$ for all $1\leq j \leq d-1$, follows.

Similarly, if $\Delta$ is a cs simplicial $(d-1)$-sphere with $N=2m$ vertices, then $\Delta$ is a full-dimensional subcomplex of $\Gamma:=\Skel_{d-1}(\partial\C^*_m)$ (under any bijection from the vertex set of $\Delta$ to that of $\partial\C^*_m$ that takes antipodal vertices to antipodal vertices). Further, if $S$ is a cs $\lfloor d/2\rfloor$-neighborly simplicial $(d-1)$-sphere with $N$ vertices, then $\Skel_{\lfloor d/2\rfloor-1}(\Gamma)= \Skel_{\lfloor d/2\rfloor-1}(S)$. Applying the same argument as above to $\Delta$, $\Gamma$, and $S$, we conclude that $h_i(\Delta)\leq h_i(S)$ for all $0\leq i \leq d$, and hence that $f_{j}(\Delta)\leq f_{j}(S)$ for all $1\leq j \leq d-1$. This completes the proof of Theorem \ref{cs-spheres-UBT}.

 Note that whether a cs $\lfloor d/2\rfloor$-neighborly simplicial $(d-1)$-sphere with $N=2m$ vertices exists or not, the $h$-numbers (and hence also $f$-numbers) such a sphere would have are well-defined: $h_i(S)=h_{d-i}(S)=h_i(\Skel_{d-1}(\C^*_m))$ for all $i\leq d/2$. Thus, independently of the existence of such a sphere, Theorem \ref{cs-spheres-UBT} provides upper bounds on face numbers of cs simplicial $(d-1)$-spheres with $N$ vertices. However, if a cs $\lfloor d/2\rfloor$-neighborly simplicial $(d-1)$-sphere with $N$ vertices exists, then these bounds are tight.

Note also that the proof of Theorem \ref{cs-spheres-UBT} almost does not use the central symmetry assumption: instead, it relies on a much weaker condition, namely, on the existence of a free involution $\phi: V(\Delta) \to V(\Delta)$ on the vertex set of $\Delta$ such that $\{v,\phi(v)\}$ is not an edge of $\Delta$ for all $v\in V(\Delta)$. We refer our readers to \cite{N05}  for more details on the $h$-vectors of simplicial spheres and manifolds possessing this weaker property and to \cite{BrowderN} for a complete characterization of $h$-vectors of CM complexes with this property.

\section{The generalized lower bound theorem for cs polytopes}
Our ultimate dream is to find a cs analog of the $g$-theorem for cs simplicial polytopes. While at the moment it is completely out of reach (we do not even have a plausible upper bound conjecture, let alone a complete characterization!), establishing the lower-bound type results is a necessary part of the program. To this end, in this section we discuss a cs analog of the Generalized Lower Bound Theorem for cs simplicial polytopes. We start by reviewing  the classical Lower Bound Theorem (LBT, for short) and the Generalized Lower Bound Theorem (GLBT, for short). To state these results, we need a few definitions. 

A {\em triangulation} of a simplicial $d$-polytope $P$ is a simplicial $d$-ball $B$ whose boundary, $\partial B$, coincides with $\partial P$. A simplicial $d$-polytope $P$ is called {\em $(r-1)$-stacked} (for some $1\leq r \leq d$) if there exists a triangulation $B$ of $P$ such that $\Skel_{d-r}(B)=\Skel_{d-r}(\partial P)$, i.e., all ``new'' faces of this triangulation have dimension $>d-r$. Note that the simplices are the only $0$-stacked polytopes, and that $1$-stacked polytopes --- usually referred to as {\em stacked} polytopes --- are polytopes that can be obtained from the $d$-simplex by repeatedly attaching (shallow) $d$-simplices along facets.  In contrast with the cyclic polytopes, two stacked $d$-polytopes with $n$ vertices may not have the same combinatorial type. However, they do have the same face numbers.

\begin{theorem} {\rm (LBT)} \label{LBT} Let $P$ be a simplicial $d$-polytope with $d\geq 4$. Then 
$h_1(P)\leq h_2(P)$, with equality if and only if  $P$ is stacked.
\end{theorem}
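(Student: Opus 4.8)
The plan is to prove both the inequality $h_1(P) \le h_2(P)$ and the characterization of equality using the Stanley--Reisner machinery reviewed in the previous section, combined with the rigidity-theoretic interpretation of $h_2 - h_1$. For the inequality, I would pass to a generic linear system of parameters $\theta_1, \dots, \theta_d$ for $\field[\partial P]$ and set $A := \field[\partial P]/(\Theta)$; by Theorem~\ref{h-numbers-CM}, since $\partial P$ is a simplicial sphere and hence $\field$-CM, we have $\dim_\field A_i = h_i(P)$. The key point is to exhibit an \emph{injection} $A_1 \hookrightarrow A_2$. The cleanest route is via the Lee--Kalai rigidity interpretation: the graph of a simplicial $d$-polytope with $d \ge 3$ is generically $d$-rigid (this is the Asimow--Roth theorem, building on Gluck and Whiteley), which translates via Kalai's dictionary into the statement that for generic $\Theta$ and a generic linear form $\omega$, multiplication by $\omega$ from $A_1$ to $A_2$ is injective. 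This immediately gives $h_1(P) = \dim_\field A_1 \le \dim_\field A_2 = h_2(P)$.

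For the equality case, the forward direction is a direct computation: if $P$ is stacked, then $\partial P$ is obtained from $\partial(\text{simplex})$ by a sequence of stacking operations, each of which adds one vertex and subdivides a facet, increasing both $f_0$ and $f_1$ in a controlled way; tracking the effect on $h_2 - h_1 = f_1 - d f_0 + \binom{d+1}{2}$, one checks that each stacking leaves $h_2 - h_1$ unchanged, and it is $0$ for the simplex, so $h_1(P) = h_2(P)$. The harder direction --- that $h_1(P) = h_2(P)$ forces $P$ to be stacked --- is where I expect the main obstacle to lie. Here I would follow Kalai's argument: the condition $h_2 = h_1$ says that the generic $d$-rigidity of the graph $G(P)$ is \emph{tight}, i.e., $G(P)$ is generically \emph{isostatic} (a minimally rigid, or ``generic stress-free'', graph on $f_0$ vertices with exactly $df_0 - \binom{d+1}{2}$ edges). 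One then needs to show that a simplicial $(d-1)$-sphere whose graph carries no generic stress must be the boundary of a stacked polytope.

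The strategy for that implication is an inductive/surgical one: using the fact that a generically isostatic simplicial sphere of dimension $d-1 \ge 3$ must contain a vertex of degree exactly $d$ (a counting argument: the average degree is less than $d+1$, and one rules out degree $< d$ by local considerations), one identifies such a vertex $v$, whose link is the boundary of a $(d-1)$-simplex. Removing $\st(v)$ and filling in that simplex produces a new simplicial $(d-1)$-sphere with one fewer vertex; one verifies that it is still generically stress-free (deleting a degree-$d$ vertex in a rigidity-theoretic sense is reversible) and, by a theorem ensuring the result of this ``inverse stacking'' is again polytopal in the relevant range, one applies the inductive hypothesis to conclude it bounds a stacked polytope; reinserting $v$ shows $P$ is stacked. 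The delicate points are (i) establishing the existence of the degree-$d$ vertex rigorously --- ruling out the low-degree cases requires knowing that links of vertices in generically stress-free spheres are themselves generically stress-free, which follows from the ``link principle'' in rigidity theory (Kalai, Tay, Whiteley) --- and (ii) the fact that contracting/deleting such a vertex preserves the sphere property and stays within a class where polytopality and the inductive hypothesis can be invoked. An alternative to the rigidity language, which I would mention, is Stanley's original approach via the structure of the generic Artinian reduction $A$ and McMullen's weak Lefschetz-type arguments, but the rigidity proof is both shorter and more transparent for the characterization of equality. The genuine difficulty throughout is thus the equality case, and specifically the combinatorial lemma that stress-freeness of a simplicial sphere propagates to links and survives vertex deletion, which is what powers the induction.
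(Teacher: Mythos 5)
The paper does not actually prove Theorem~\ref{LBT}: being a survey, it states the result and attributes the inequality to Barnette~\cite{Barnette73} and the equality characterization to Billera--Lee~\cite{BilleraLee}; the only proof sketch it offers is of Kalai's rigidity argument for the \emph{inequality} part of the more general Theorem~\ref{LBT-manifolds} in Section~8. Your treatment of the inequality $h_1\le h_2$ via the face ring and rigidity is the standard route and is sound (although the infinitesimal rigidity of graphs of simplicial $d$-polytopes is Whiteley's theorem~\cite{Whiteley-84}; Asimow--Roth supply only the general equivalences recorded in Theorem~\ref{basic-rig-prop}), and the forward direction of the equality statement is, as you say, a routine verification.

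The genuine gap is in the converse, at the very first step of the surgical induction: the assertion that a stress-free simplicial $(d-1)$-sphere must contain a degree-$d$ vertex ``because the average degree is less than $d+1$'' is false. From $g_2(\Delta)=0$ one gets $f_1=d f_0-\binom{d+1}{2}$, so the average degree is $2d-d(d+1)/f_0$, which already exceeds $d+1$ once $f_0\ge d+3$; for all but the very smallest spheres this counting gives nothing. The existence of a degree-$d$ vertex is of course true in the end (since stress-freeness does force stackedness), but that is the conclusion you are trying to prove, and your vertex-count induction never gets off the ground without an independent argument for it. The proof that actually works (Kalai's~\cite{Kalai87}) inducts on the dimension $d$ rather than on $f_0$: using the cone and gluing lemmas one shows $g_2(\lk_\Delta v)=0$ for every vertex $v$, the inductive hypothesis then makes each vertex link the boundary of a stacked $(d-1)$-polytope, and a separate combinatorial argument upgrades this to the statement that $\Delta$ itself bounds a stacked polytope. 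This structure also dissolves your worry about whether an ``inverse stacking'' stays polytopal: one works throughout in the class of all simplicial spheres (indeed normal pseudomanifolds), and polytopality is the \emph{conclusion} of the equality case, not an invariant one has to preserve along the way.
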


\begin{theorem} {\rm (GLBT)} \label{GLBT} Let $P$ be a simplicial $d$-polytope. Then 
$$1=h_0(P)\leq h_1(P)\leq h_2(P)\leq \cdots\leq h_{\lfloor d/2\rfloor}(P).$$ 
Furthermore, $h_r(P)=h_{r-1}(P)$ for some $r\leq d/2$ if and only if $P$ is $(r-1)$-stacked.
\end{theorem}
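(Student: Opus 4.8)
\textbf{Proof proposal for the GLBT (Theorem~\ref{GLBT}).}

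The plan is to deduce the inequality chain and the equality characterization from the hard Lefschetz theorem for the Stanley--Reisner ring of $\partial P$, following the Billera--Lee--Stanley circle of ideas. First I would invoke the $g$-theorem machinery: since $P$ is a simplicial $d$-polytope, the boundary complex $\partial P$ is a simplicial $(d-1)$-sphere, and over $\field=\R$ one may choose a generic l.s.o.p.\ $\theta_1,\ldots,\theta_d$ for $\R[\partial P]$ together with a generic linear form $\omega$ such that the Artinian reduction $A:=\R[\partial P]/(\Theta)$ satisfies the strong Lefschetz property with Lefschetz element $\omega$. Concretely this means that for each $i\le d/2$ the multiplication map $\cdot\,\omega^{d-2i}\colon A_i\to A_{d-i}$ is an isomorphism; in particular $\cdot\,\omega\colon A_{i-1}\to A_i$ is injective for all $i\le d/2$. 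By Theorem~\ref{h-numbers-CM} we have $\dim_\R A_i=h_i(P)$ (using that spheres are CM over any field), so the injectivity of $\cdot\,\omega$ immediately gives $h_{i-1}(P)=\dim_\R A_{i-1}\le \dim_\R A_i=h_i(P)$ for every $i\le\lfloor d/2\rfloor$. That establishes the chain $1=h_0(P)\le h_1(P)\le\cdots\le h_{\lfloor d/2\rfloor}(P)$.

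Next I would treat the equality case. Suppose $h_r(P)=h_{r-1}(P)$ for some $r\le d/2$. Then $\cdot\,\omega\colon A_{r-1}\to A_r$ is an isomorphism, and by iterating the strong Lefschetz isomorphisms one checks that multiplication by $\omega$ is surjective from degree $r-1$ onward; combined with the standard fact that a generic $\omega$ together with $\Theta$ generates the maximal ideal, this forces the quotient ring $A/(\omega)$ to vanish in degrees $\ge r$, i.e.\ the quotient of $\R[\partial P]$ by a generic l.s.o.p.\ of length $d+1$ is concentrated in degrees $<r$. The content of the GLBT equality statement is to translate this algebraic vanishing into the geometric statement that $P$ is $(r-1)$-stacked. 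For the ``easy'' direction — if $P$ is $(r-1)$-stacked then $h_r=h_{r-1}$ — I would argue as follows: if $B$ is the triangulation of $P$ with $\Skel_{d-r}(B)=\Skel_{d-r}(\partial P)$, then one compares $h$-vectors of $\partial P$ and $B$ via the relation $h_i(\partial B)=h_i(B)-h_{i-1}(?)$-type identities for balls (more precisely, using that $\partial B=\partial P$ and the long exact sequence relating $\R[B]$, $\R[\partial B]$, and the interior face module), and the hypothesis that $B$ has no new faces of dimension $\le d-r$ pins down $g_r(\partial P)=h_r(P)-h_{r-1}(P)=0$.

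The hard part will be the converse direction: producing the $(r-1)$-stacked triangulation $B$ from the algebraic equality $g_r(P)=0$. This is exactly the portion of the GLBT that, historically, required McMullen's weight algebra / polytope algebra techniques (and, alternatively, Murai--Nevo's later combinatorial proof). The natural route is to use McMullen's correspondence between the Artinian reduction $A$ and a subalgebra of the polytope algebra of $P$: a vanishing $g_r=0$ means that a certain space of ``$r$-stresses'' on $\partial P$ is trivial, and by a result identifying $r$-stresses with affine dependences that would obstruct filling in the $(d-r)$-skeleton, one constructs the desired triangulation face by face. I would isolate this geometric realization step as the crux, cite McMullen's polytope-algebra argument (or the Murai--Nevo argument) for it rather than reproving it, and note that it is precisely this step that fails to go through for general simplicial spheres — which is why the GLBT for spheres remains the conjecture indicated in the table.
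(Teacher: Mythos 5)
Your proposal follows essentially the same route the paper sketches: for the inequality chain you invoke hard Lefschetz to make $\cdot\,\omega\colon A_{i-1}\to A_i$ injective in degrees $i\le d/2$ of the Artinian reduction $A=\R[\partial P]/(\Theta)$ — the paper realizes this by identifying $A$ with the cohomology ring of the toric variety $X_P$ using the l.s.o.p.\ built from the vertex coordinates of a rational perturbation of $P$, whereas your ``generic l.s.o.p.\ together with a generic Lefschetz element has the SLP'' phrasing is the equivalent modern packaging of the same input — and for the equality characterization both you and the paper defer the hard direction to Murai--Nevo (the paper also mentions Adiprasito's alternative via Lee's generalized stress spaces, which is closer to the polytope-algebra route you mention). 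One small slip worth correcting in your equality-case deduction: the vanishing of $A/(\omega)$ in degrees $\ge r$ does not follow from the claim that ``$\omega$ together with $\Theta$ generates the maximal ideal'' (for $n>d+1$ that is false, since $(A/(\omega))_1$ has dimension $h_1-1=n-d-1>0$); it follows simply because $A/(\omega)$ is a standard graded algebra, so once its degree-$r$ piece is zero every higher-degree piece is a product of degree-one elements passing through degree $r$ and therefore also vanishes.
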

 
Since $f_{i-1}$ is a non-negative linear combination of $h_0, h_1, \ldots, h_i$, one immediate corollary of Theorem \ref{LBT} is that among all simplicial $d$-polytopes with $n$ vertices, a stacked polytope has the smallest number of edges. In fact, Theorem \ref{LBT} together with a well-known reduction due to McMullen, Perles, and Walkup implies that among all simplicial $d$-polytopes with $n$ vertices, stacked polytopes simultaneously minimize \emph{all} the face numbers, and that for $d\geq 4$, stacked polytopes are the only minimizers, see \cite[Section 5]{Kalai87}.

The differences between consecutive $h$-numbers are known as the {\em $g$-numbers}: $g_0(P):=1$ and $g_r(P):=h_r(P)-h_{r-1}(P)$ for $1\leq r\leq d/2$. Non-negativity of $g_2$ for simplicial polytopes of dimension $4$ and higher was established by Barnette \cite{Barnette73},
while Billera and Lee \cite{BilleraLee} proved that the equality $g_2(P)=0$ (for $d\geq 4$) holds if and only if $P$ is stacked. The first part of the GLBT was proved by Stanley \cite{Stanley80} (a more elementary proof of this part is due to McMullen \cite{McMullen93, McMullen96}). The second part was conjectured by McMullen and Walkup, \cite{McMullenWalkup71}, and proved only recently by Murai and Nevo \cite{MuraiNevo2013}.

Since the $d$-dimensional cross-polytope has the minimal number of faces among all cs simplicial $d$-polytopes and since $h_r(\C^*_d)={d \choose r}>{d \choose r-1}=h_{r-1}(\C^*_d)$ for all $1\leq r\leq d/2$, one expects that for this class of polytopes, the inequalities $g_r(P)\geq 0$ of the GLBT can be considerably strengthened. Indeed, the following result of Stanley \cite{Stanley-87} provides a cs version of the inequality part of the GLBT. This result settled an unpublished conjecture by Bj\"orner.

\begin{theorem} \label{cs-GLBT-ineq} Let $P$ be a cs simplicial $d$-polytope. Then
$$
g_r(P)\geq {d \choose r} -{d \choose r-1}=g_r(\C^*_d) \quad \mbox{for all } r\leq d/2.
$$
\end{theorem}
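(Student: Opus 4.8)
The plan is to pass to the Artinian reduction of the Stanley--Reisner ring, run the hard-Lefschetz argument equivariantly with respect to the central involution, and use the fact that central symmetry pins down the Hilbert function of the anti-invariant part; the inequality then drops out of a single trace computation. First I would reduce to the algebra. After a tiny perturbation of the vertices of $P$ — which changes neither the combinatorial type nor the central symmetry — we may assume $P$ is rational; fix a field $\field$ of characteristic $0$, put $\Delta=\partial P$, and take the geometric linear system of parameters $\theta_j=\sum_v\langle e_j^\ast,v\rangle\,x_v$ ($1\le j\le d$), which is an l.s.o.p.\ because $0\in\Int P$. Set $A:=\field[\Delta]/(\Theta)$, so $\dim_\field A_r=h_r(P)$ by Theorem~\ref{h-numbers-CM}. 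Since $\Delta$ is a simplicial sphere, $A$ is Gorenstein with socle in degree $d$, and by hard Lefschetz for simplicial polytopes (the theorem underlying Theorem~\ref{GLBT}) there is $\omega\in A_1$ with $\cdot\,\omega\colon A_{r-1}\hookrightarrow A_r$ for every $r\le d/2$. Crucially, because $P$ is centrally symmetric the Lefschetz class may be taken to be the one attached to a \emph{symmetric} strictly convex piecewise-linear function on $\partial P$, and so $\omega$ is invariant under the involution introduced next.

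Next I would make the whole picture $\mathbb Z/2$-equivariant. The antipodal map $\phi(v)=-v$ induces a graded algebra automorphism $\iota$ of $\field[\Delta]$ by $x_v\mapsto x_{-v}$; the chosen l.s.o.p.\ satisfies $\iota(\theta_j)=-\theta_j$, so $\iota$ descends to an involution of $A$, and I split $A_r=A_r^+\oplus A_r^-$ into $(\pm1)$-eigenspaces. As $\omega$ is $\iota$-invariant, multiplication by $\omega$ preserves this splitting, so the Lefschetz injection restricts to $\cdot\,\omega\colon A_{r-1}^-\hookrightarrow A_r^-$ for $r\le d/2$; in particular $\dim A_r^-\ge\dim A_{r-1}^-$ for all $r\le d/2$.

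The point is now to identify $\dim A_r^-$. Since no nonempty face $F$ of $\Delta$ has $\phi(F)=F$, the map $\iota$ permutes the monomial basis of $\field[\Delta]_r$ without fixed points for every $r\ge1$, whence the graded trace $\sum_r\operatorname{tr}(\iota\mid\field[\Delta]_r)\,q^r$ equals $1$. Iterating the exact sequence \eqref{exact-seq} over $\theta_1,\dots,\theta_d$ — a regular sequence each of whose terms is an $\iota$-eigenvector of eigenvalue $-1$ — multiplies this graded trace by $\prod_{j=1}^d\bigl(1-(-1)q\bigr)=(1+q)^d$, in exact parallel with how Theorem~\ref{h-numbers-CM} multiplies the ordinary Hilbert series by $(1-q)^d$. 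Hence $\sum_r\operatorname{tr}(\iota\mid A_r)\,q^r=(1+q)^d$, i.e.\ $\operatorname{tr}(\iota\mid A_r)=\binom dr$, so that $\dim A_r^-=\tfrac12\bigl(h_r(P)-\binom dr\bigr)$ (already $\dim A_r^-\ge0$ recovers the bound $h_r(P)\ge\binom dr$). Combining this identity with the Lefschetz inequality of the previous step,
\[
0\;\le\;\dim A_r^- - \dim A_{r-1}^-\;=\;\tfrac12\bigl(g_r(P)-g_r(\C^*_d)\bigr)\qquad(r\le d/2),
\]
which is precisely the assertion.

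The genuinely hard input is the hard Lefschetz theorem for simplicial polytopes — this is the main obstacle, and it enters only through the existence of an $\iota$-invariant Lefschetz element. The key observation beyond it is the trace identity $\operatorname{tr}(\iota\mid A_r)=\binom dr$, which is exactly where central symmetry is decisive: it says that the ``anti-invariant $h$-vector'' of any cs simplicial $d$-polytope is the universal quantity $\tfrac12\bigl(h_r(P)-h_r(\C^*_d)\bigr)$, after which equivariant hard Lefschetz forces it to be monotone up to the middle. A reader wishing to avoid the reduction to a rational polytope can instead carry out the identical eigenspace argument inside McMullen's polytope algebra of $P$, on which the central symmetry acts and which satisfies hard Lefschetz.
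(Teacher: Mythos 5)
Your proposal is correct and is essentially the argument the paper gives (Stanley's original one from \cite{Stanley-87}): perturb to a rational polytope, take the geometric l.s.o.p.\ $\theta_j$ (on which the involution $\iota$ acts by $-1$), note that the Lefschetz element $\omega=\sum_v x_v$ is $\iota$-invariant, and read off the inequality from the anti-invariant part of the Artinian reduction. The one cosmetic difference is how you identify $\dim A_r^-=\tfrac12\bigl(h_r(P)-\binom dr\bigr)$: the paper refines the grading to $\N\times\Z/2\Z$ and tracks the two parities directly through the exact sequences \eqref{exact-seq}, whereas you package the same bookkeeping as a graded-trace computation (the fixed-point-freeness of $\iota$ on monomials giving trace $1$, and each quotient by a $(-1)$-eigenvector $\theta_j$ multiplying the trace series by $(1+q)$). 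These are equivalent; the trace phrasing is a compact way of expressing eq.~\eqref{h_-}, so there is nothing genuinely new here, and no gap either.
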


Stanley's proof of the first part of Theorem \ref{GLBT} is based on the theory of toric varieties associated with polytopes. Here is a very rough sketch of the argument. 
If $P\subset \R^d$ is a simplicial $d$-polytope, then a slight perturbation of the vertices of $P$ does not change its combinatorial type, and so we may assume without loss of generality that $P$ has rational vertices; further, by translating $P$, we may also assume that the origin is in the interior of $P$. Let $V$ be the vertex set of $P$. For each $v\in V$, let $(v_1,v_2,\ldots,v_d)$ denote the coordinates of $v$ in $\R^d$, and define $\theta_j:=\sum_{v\in V} v_j x_v \in \R[x_v \ : \ v\in V]$ for $j=1,2,\ldots,d$. Consider $\R[\partial P]$ --- the Stanley--Reisner ring of the boundary complex of $P$ over $\R$. Then $\theta_1,\ldots,\theta_d$ is an l.s.o.p.~of $\R[\partial P]$ (this follows, for instance, from \cite[Theorem III.2.4]{Stanley96}), and so by Theorem \ref{h-numbers-CM}, $\dim_\R \left(\R[\partial P]/(\Theta)\right)_i=h_i(P)$. On the other hand, $\R[\partial P]/(\Theta)$ is isomorphic to the singular cohomology ring of the toric variety $X_P$ corresponding to $P$ (see \cite[Theorem 10.8]{Danilov} and \cite[Section 5.2]{Fulton} for more details). Let $\omega=\sum_{v\in V} x_v$. The toric variety $X_P$ is known to satisfy the hard Lefschetz theorem, which implies that for all $i\leq d/2$, the multiplication map $\cdot\omega: \left(\R[\partial P]/(\Theta)\right)_{i-1} \to \left(\R[\partial P]/(\Theta)\right)_i$ is injective, and hence that $h_{i-1}(P)\leq h_i(P)$ for all $i\leq d/2$, as desired.

 Extending Theorem \ref{GLBT} to all simplicial spheres is a major open problem in the field: it is a part of the celebrated $g$-conjecture. The main obstacle is that all known proofs of the $g$-theorem for simplicial polytopes, see \cite{Stanley80} and \cite{McMullen93,McMullen96}, rely heavily on such tools as toric varieties and the hard Lefschetz theorem or polytopal algebras and the Hodge--Riemann--Minkowski quadratic inequalities between mixed volumes, that is, tools that are (at least at present) available only for convex polytopes.

To prove Theorem \ref{cs-GLBT-ineq} a few additional ideas are needed. Let $P$ be a cs simplicial $d$-polytope or, more generally, let $(\Delta, \phi)$ be a cs CM complex over $\R$ with vertex set $V$. Then the involution $\phi$ induces the map $\sigma: x_v\to x_{\phi(v)}$ on the set of variables of $S=\R[x_v \ : \ v\in V]$, which, in turn, extends to a unique automorphism $\sigma$ on $S$. Furthermore, since $\sigma(I_\Delta)=I_\Delta$, the map $\sigma$ gives rise to an automorphism on  $\R[\Delta]$, which we also denote by $\sigma$. The main insight of \cite{Stanley-87} is that this allows us to equip $\R[\Delta]$ with a finer grading by $\N\times \Z/2\Z$: indeed, define 
\[\R[\Delta]_{(i,+1)}:=\{f\in\R[\Delta]_i \ : \ \sigma(f)=f\} \quad \mbox{and} \quad
 \R[\Delta]_{(i,-1)}:=\{f\in\R[\Delta]_i \ : \ \sigma(f)=-f\}.
\]
 Then $\R[\Delta]_i=\R[\Delta]_{(i,+1)}\oplus \R[\Delta]_{(i,-1)}$ as vector spaces while $\R[\Delta]_{(i,\epsilon_1)} \cdot \R[\Delta]_{(j,\epsilon_2)}\subseteq \R[\Delta]_{(i+j, \epsilon_1\epsilon_2)}$ for all $i,j\in\N$ and $\epsilon_1,\epsilon_2\in\{\pm 1\}$. It is also not hard to see that $\dim_R \R[\Delta]_{(i,+1)}=\dim_R \R[\Delta]_{(i,-1)}=\frac{1}{2} \dim_R \R[\Delta]_i$ for all $i\geq 1$, and that $\R[\Delta]_{(0,+1)}=\R[\Delta]_0=\R$.

One can use the Kind--Kleinschmidt criterion \cite[Theorem III.2.4]{Stanley96} to show that $\R[\Delta]$ has an l.s.o.p.~$\theta_1,\ldots,\theta_d$ such that $\theta_j\in\R[\Delta]_{(1,-1)}$ for all $j$. (For instance, for a cs simplicial polytope $P$ and $\Delta=\partial P$, the l.s.o.p.~described in the proof of Theorem \ref{GLBT} does the job.) Analyzing the exact sequences as in the proof of Theorem \ref{h-numbers-CM}, see eq.~\eqref{exact-seq}, but using our finer grading and, in particular, that $\cdot\theta_j$ maps the degree $(i-1,\pm 1)$ pieces of $\R[\Delta]/(\theta_1,\ldots,\theta_{j-1})$ to the degree $(i,\mp 1)$ pieces, then yields that 
\begin{equation} \label{h_-}
\dim_\R (\R[\Delta]/(\Theta))_{(i,-1)}=\frac{1}{2} \left(h_i(\Delta)-{d \choose i}\right) \quad \mbox{for all } 0\leq i \leq d;
\end{equation}
see \cite{Stanley-87} for more details. This establishes the following result of Stanley \cite{Stanley-87}:

\begin{theorem} \label{cs-CM}
Let $\Delta$ be a cs $\R$-CM complex of dimension $d-1$. Then $h_i(\Delta)\geq {d \choose i}$ for all $i$.
\end{theorem}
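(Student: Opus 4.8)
The plan is to leverage the finer $\N \times \Z/2\Z$ grading on $\R[\Delta]$ that the involution $\phi$ induces, exactly as outlined in the paragraph preceding the statement, and to extract the inequality $h_i(\Delta) \ge \binom{d}{i}$ directly from the nonnegativity of a dimension count. First I would set up the grading: the simplicial involution $\phi$ on $\Delta$ induces $\sigma: x_v \mapsto x_{\phi(v)}$ on $S = \R[x_v : v \in V]$, which preserves $I_\Delta$ and hence descends to an automorphism $\sigma$ of $\R[\Delta]$ of order $2$. Since $\mathrm{char}\, \R \ne 2$, each graded piece $\R[\Delta]_i$ splits as $\R[\Delta]_{(i,+1)} \oplus \R[\Delta]_{(i,-1)}$ into the $+1$ and $-1$ eigenspaces of $\sigma$, and the multiplicative structure respects the $\Z/2\Z$ grading: $\R[\Delta]_{(i,\epsilon_1)} \cdot \R[\Delta]_{(j,\epsilon_2)} \subseteq \R[\Delta]_{(i+j, \epsilon_1 \epsilon_2)}$. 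The key numerical input at this stage is that $\dim_\R \R[\Delta]_{(i,+1)} = \dim_\R \R[\Delta]_{(i,-1)} = \tfrac12 \dim_\R \R[\Delta]_i$ for $i \ge 1$: this follows because the monomial basis of $\R[\Delta]_i$ is permuted by $\sigma$ with no fixed monomials (a monomial fixed by $\sigma$ would have support a face $F$ with $\phi(F) = F$, which is excluded for nonempty $F$), so the basis decomposes into free $\Z/2\Z$-orbits, and symmetrizing/antisymmetrizing each orbit gives equal-dimensional eigenspaces.

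Next I would produce an l.s.o.p.\ living entirely in the $-1$ eigenspace. Since $\Delta$ is $\R$-CM of dimension $d-1$, $\R[\Delta]$ has Krull dimension $d$ and admits a linear system of parameters; the point is to choose $\theta_1, \dots, \theta_d \in \R[\Delta]_{(1,-1)}$. This is where the Kind--Kleinschmidt criterion \cite[Theorem III.2.4]{Stanley96} enters: it gives a combinatorial condition for a set of linear forms to be an l.s.o.p., phrased in terms of the forms being nonzero on the appropriate faces. Because $\dim \R[\Delta]_{(1,-1)} = \tfrac12 f_0(\Delta)$ and the antisymmetric linear forms are plentiful enough (a generic antisymmetric form $\sum_v c_v(x_v - x_{\phi(v)})$ meets the criterion), one gets an l.s.o.p.\ $\Theta = (\theta_1, \dots, \theta_d)$ with every $\theta_j$ antisymmetric. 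Being $\R$-CM, $\Theta$ is automatically a regular sequence on $\R[\Delta]$.

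The heart of the argument is then the graded Hilbert-series bookkeeping, refined by the $\Z/2\Z$ grading. Because each $\theta_j \in \R[\Delta]_{(1,-1)}$, multiplication by $\theta_j$ sends the $(i-1, +1)$-piece into the $(i, -1)$-piece and the $(i-1, -1)$-piece into the $(i, +1)$-piece — it \emph{swaps} the two eigen-parities while raising degree by one. Running the exact sequence \eqref{exact-seq} for each $\theta_j$ in turn, but tracking the pair $(\text{degree}, \text{parity})$ rather than just the degree, yields a recursion for the bigraded Hilbert function of the successive quotients. Carrying this through all $d$ steps (and using $\R[\Delta]_{(0,+1)} = \R$, $\R[\Delta]_{(0,-1)} = 0$ as the base case, together with the $1:1$ split in every positive degree) produces the formula
\begin{equation*}
\dim_\R \bigl(\R[\Delta]/(\Theta)\bigr)_{(i,-1)} = \tfrac12\Bigl(h_i(\Delta) - \tbinom{d}{i}\Bigr) \quad \text{for all } 0 \le i \le d,
\end{equation*}
since $\sum_i \binom{d}{i} t^i = (1+t)^d$ is precisely the Hilbert series one obtains from the $d$ parity-swaps acting on a one-dimensional symmetric degree-$0$ space. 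As the left-hand side is the dimension of a vector space, it is $\ge 0$, which gives $h_i(\Delta) \ge \binom{d}{i}$ at once.

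I expect the main obstacle to be the verification that an antisymmetric l.s.o.p.\ actually exists — i.e., that one can satisfy the Kind--Kleinschmidt criterion while confined to $\R[\Delta]_{(1,-1)}$, rather than using arbitrary linear forms. Everything downstream (the eigenspace splitting, the parity-swapping multiplication maps, the Hilbert-series recursion) is essentially formal once this is in place; for a cs simplicial polytope $P$ the explicit l.s.o.p.\ built from vertex coordinates in the proof of Theorem \ref{GLBT} is visibly antisymmetric (since $v$ and $-v$ are antipodal vertices), so the polytopal case is immediate, but for a general cs $\R$-CM complex one must argue genericity of antisymmetric forms against the criterion. A secondary point requiring a little care is the base-case parity bookkeeping: one must confirm that the "extra" Hilbert series being subtracted is exactly $(1+t)^d$ and not $(1-t)^d$ or something mixed, which is where the sign pattern of the parity-swaps gets used.
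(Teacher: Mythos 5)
Your proposal is correct and follows essentially the same route as the paper's own argument: the $\N\times\Z/2\Z$ grading induced by $\sigma$, the equal split of $\dim_\R\R[\Delta]_i$ into $\pm 1$ eigenspaces for $i\geq 1$ (justified by the absence of $\sigma$-fixed monomials of positive degree, itself a consequence of the fixed-point-free involution), an l.s.o.p.\ chosen in $\R[\Delta]_{(1,-1)}$ via the Kind--Kleinschmidt criterion, and then the parity-swapping refinement of the exact sequence \eqref{exact-seq} culminating in eq.~\eqref{h_-}, whose nonnegativity gives $h_i(\Delta)\geq\binom{d}{i}$. This is precisely Stanley's argument as sketched in Section 7, and your genericity discussion of antisymmetric forms against the Kind--Kleinschmidt criterion correctly fills in the one step the paper leaves implicit.
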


We now consider the case of $\Delta=\partial P$, where $P$ is a cs simplicial $d$-polytope (with rational vertices), and $\theta_1,\ldots,\theta_d$ are defined using the coordinates of vertices of $P$ as in the sketch of the proof of Theorem \ref{GLBT}; in particular, $\theta_j\in \R[\Delta]_{(1,-1)}$. Observe that $\omega=\sum_{v\in V}x_v\in\R[\Delta]_{(1,+1)}$ and hence that $\cdot\omega$ maps $(\R[\Delta]/(\Theta))_{(i-1,-1)}$ to $(\R[\Delta]/(\Theta))_{(i,-1)}$. As the map $\cdot\omega: \left(\R[\Delta]/(\Theta)\right)_{i-1} \to \left(\R[\Delta]/(\Theta)\right)_i$ is injective for all $i\leq d/2$, its restriction to $(\R[\Delta]/(\Theta))_{(i-1,-1)}$ is also injective. This, along with
eq.~\eqref{h_-}, completes the proof of Theorem \ref{cs-GLBT-ineq}. It is worth pointing out that Theorem \ref{cs-GLBT-ineq} was extended by Adin \cite{Adin93,Adin95} to the classes of all rational simplicial polytopes with a fixed-point-free linear symmetry of prime and prime-power orders. Furthermore, 	A'Campo-Neuen \cite{ACampo} extended Theorem 7.3 to \emph{toric} $g$-numbers of \emph{all} cs polytopes (including non-rational non-simplicial ones).

The Murai--Nevo proof \cite{MuraiNevo2013} of the equality case of Theorem \ref{GLBT} involves a beautiful blend of tools such as Alexander duality, Stanley--Reisner rings, the Cohen--Macaulay property,  as well as generic initial ideals, and, in particular, Green's crystalization principle \cite[ Proposition 2.28]{Green-gin}. A different proof of both parts of Theorem \ref{GLBT}, including a sharper version of the inequality part, was very recently obtained by Adiprasito \cite[Cor.~6.5 and \S 7]{Adiprasito-toric}. His proof relies on Lee's generalized stress spaces \cite{Lee94}.

When does equality hold in Theorem \ref{cs-GLBT-ineq}? That is, for a fixed $r\leq d/2$, is there a characterization of cs simplicial $d$-polytopes with $g_r={d \choose r}-{d \choose r-1}$? In a recent preprint \cite{KNNZ}, Klee, Nevo, Novik, and Zheng provide such a characterization in the $r=2$ case and a conjectural characterization in the $r>2$ case. Both statements strongly parallel the equality cases of Theorems \ref{LBT} and \ref{GLBT}. If $P$ is a cs simplicial $d$-polytope with $d\geq 4$, then we can apply to $P$ the {\em symmetric stacking operation}: this operation repeatedly attaches (shallow) simplices along antipodal pairs of facets. Note that symmetric stacking preserves both central symmetry and $g_2$.

\begin{theorem} \label{cs-LBT-equality}
Let $P$ be a cs simplicial $d$-polytope with $d \geq 4$.  Then $g_2(P) = {d \choose 2}-d$ if and only if $P$ is obtained from $\C^*_d$ by symmetric stacking.
\end{theorem}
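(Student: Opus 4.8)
The plan is to prove both directions separately, with the ``if'' direction being the easy one and the ``only if'' direction requiring the real work.

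\textbf{The easy direction.} If $P$ is obtained from $\C_d^*$ by symmetric stacking, then since stacking along a facet adds one vertex and increases $f_0$ by $1$ while leaving $h_2$ unchanged (each shallow simplex attached along a facet contributes $1$ to $h_1$ and $0$ to $g_2$, a standard computation), and since symmetric stacking is just doing this to an antipodal pair simultaneously, we get $g_2(P) = g_2(\C_d^*) = \binom{d}{2} - d$ directly from $h_2(\C_d^*) = \binom{d}{2}$, $h_1(\C_d^*) = d$. One must check that symmetric stacking indeed produces a cs simplicial polytope (choosing the stacked vertices to be antipodal and the simplices sufficiently shallow), which is routine.

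\textbf{The hard direction.} Suppose $P$ is a cs simplicial $d$-polytope with $g_2(P) = \binom{d}{2} - d$. The strategy I would follow mirrors the Billera--Lee/Kalai approach to the equality case of the classical LBT, carried out inside the finer $\N \times \Z/2\Z$-grading of $\R[\partial P]$ introduced in the sketch of Theorem~\ref{cs-GLBT-ineq}. By eq.~\eqref{h_-}, the hypothesis says $\dim_\R(\R[\partial P]/(\Theta))_{(2,-1)} = \dim_\R(\R[\partial P]/(\Theta))_{(1,-1)}$, i.e.\ the Lefschetz map $\cdot\omega\colon (\R[\partial P]/(\Theta))_{(1,-1)} \to (\R[\partial P]/(\Theta))_{(2,-1)}$ (which is injective by hard Lefschetz, as in the proof of Theorem~\ref{cs-GLBT-ineq}) is in fact an \emph{isomorphism}. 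The first key step is to translate this algebraic surjectivity into a combinatorial rigidity statement: one wants to say that every ``missing'' face (non-edge) of $\partial P$ that is not an antipodal pair is forced, which should pin down the $2$-skeleton. Concretely, I would invoke the connection between $h_2$, the $g_2$ of links, and the rigidity of the graph of $P$ in the sense of the lower bound theorem (Kalai's rigidity argument, or its refinement in the cs setting); equality in the lower bound for $g_2$ means the graph, after quotienting by the symmetry, is ``minimally rigid'' in the appropriate relative sense, forcing a very restricted structure.

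\textbf{Reducing to a stacked structure.} Once the $2$-skeleton is understood, I would argue by induction on $f_0(P)$. The base case is $f_0 = 2d$, where Theorem~\ref{cs-CM} forces $h_i(P) = \binom{d}{i}$ for all $i$, hence $f(P) = f(\C_d^*)$, and since the cross-polytope is the unique cs simplicial $(d-1)$-sphere with $2d$ vertices (stated in the excerpt just before Problem~\ref{cs-neighb-sph}), $P = \C_d^*$. For the inductive step, when $f_0(P) > 2d$ one must locate an antipodal pair of vertices $\{v, -v\}$ each of which is ``simple'' (contained in exactly $d$ facets, so that its link is the boundary of a simplex and it can be removed by an inverse symmetric-stacking move). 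The existence of such a pair is what the $g_2$-equality, via the rigidity/$2$-skeleton analysis, should deliver: in a minimally rigid situation one expects low-degree vertices, and central symmetry lets one pass to an antipodal pair. After deleting the stars of $v$ and $-v$ and filling in the two resulting holes with facets, one obtains a cs simplicial $d$-polytope $P'$ with two fewer vertices; one checks $g_2(P') = g_2(P) = \binom{d}{2} - d$ (removing a simple vertex drops $h_1$ by $1$ and $h_2$ by $1$) and that $P'$ is again a polytope (here one may need the fact, as in \cite[Section 5]{Kalai87} via the McMullen--Perles--Walkup reduction, that the combinatorial operation is realizable, or argue at the level of spheres and lift at the end), and applies the inductive hypothesis.

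\textbf{Main obstacle.} The crux is the middle step: extracting, from the single numerical equality $g_2(P) = \binom{d}{2} - d$, enough structural control to guarantee an antipodal pair of simple vertices. In the non-cs LBT this is exactly Kalai's use of the rigidity of $2$-connected graphs together with the cone lemma, and the delicate point in the cs case is to run the rigidity argument \emph{relative to} the antipodality structure (no edge $\{v, -v\}$), so that the correct ``minimally rigid'' model is the cross-polytope rather than the simplex; getting the bookkeeping of the $\Z/2\Z$-grading to interact correctly with the framework rigidity is where the real difficulty lies. I expect that, as in \cite{KNNZ}, one either adapts the stress-space/rigidity machinery directly or deduces the needed vertex-link structure from the equality case of the $g_2$-inequality applied to vertex links (which are themselves cs-CM after the quotient), and then an inductive teardown finishes the proof.
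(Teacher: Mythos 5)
Your easy direction is fine (the small slip claiming stacking leaves $h_2$ unchanged is harmless, since what matters is that it increases $h_1$ and $h_2$ by the same amount and so fixes $g_2$), and the base case $f_0=2d$ correctly reduces to the cross-polytope via Theorem~\ref{cs-CM}. But in the hard direction you have located the crux and then not closed it: the inductive step requires producing, from the single equality $g_2(P)=\binom{d}{2}-d$ with $f_0(P)>2d$, an antipodal pair of simple vertices (each lying in exactly $d$ facets), and you offer only a hope that rigidity or a link analysis will deliver this. Neither of the two routes you sketch is carried out, and one of them is actually wrong.

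The flawed fallback is the suggestion to ``deduce the needed vertex-link structure from the equality case of the $g_2$-inequality applied to vertex links (which are themselves cs-CM after the quotient).'' Vertex links of a cs complex are generally \emph{not} centrally symmetric --- the paper stresses exactly this as the obstruction to proving Conjecture~\ref{inf_rig_cs_spheres} via the cone and gluing lemmas --- and passing to a $\Z/2\Z$-quotient does not make them cs either. So Theorem~\ref{cs-GLBT-ineq} and its equality characterization simply do not apply to a link, and this route cannot produce the simple antipodal pair.

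What your proposal is missing is the specific structural lemma that drives the proof in \cite{KNNZ}, as the paper indicates: when $g_2(P)=\binom{d}{2}-d$, \emph{every} stress on the natural $d$-framework of $\partial P$ is symmetric, i.e., assigns equal weight to antipodal edges (this follows from the work of Stanley, Lee, and Sanyal--Werner--Ziegler via the $\N\times\Z/2\Z$-grading you set up, but you never extract it). Combined with Whiteley's theorem that the natural embedding of the graph of a simplicial $d$-polytope is infinitesimally rigid, this stress-symmetry is what pins down the combinatorics of $P$ and allows the teardown to proceed. Your proposal gestures at ``adapting the stress-space/rigidity machinery directly'' but never states or uses this lemma, so the middle of the argument remains an unproved assertion rather than a proof.
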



\begin{conjecture} \label{r-stackedness}
Let $P$ be a  cs simplicial $d$-polytope, and assume that $g_r(P) = {d \choose r}-{d \choose r-1}$ for some $3 \leq r \leq \lfloor d/2 \rfloor$. Then there exists a unique polytopal complex $\C$ in $\R^d$ with the following properties: (i) one of the faces of $\C$ is the cross-polytope $\C^*_d$, all other faces of $\C$ are simplices that come in antipodal pairs, (ii) $\C$ is a ``cellulation" of $P$, that is, $\bigcup_{C\in\C}C=P$, and (iii) each element $C\in \C$ of dimension $\leq d-r$ is a face of $P$. 
\end{conjecture}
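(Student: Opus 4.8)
The plan is to run a centrally symmetric version of the argument that proves the equality case of the classical GLBT (Theorem~\ref{GLBT}), upgrading the inequality of Theorem~\ref{cs-GLBT-ineq} to a rigidity statement about $\R[\partial P]$ and then extracting the cellulation $\C$ from the combinatorics that rigidity forces. Throughout I work, as in Stanley's proof of Theorem~\ref{cs-GLBT-ineq}, with the finer $\N\times\Z/2\Z$ grading on $\R[\partial P]$, the l.s.o.p.~$\Theta$ built from the (rational) vertex coordinates of $P$, the Lefschetz element $\omega=\sum_v x_v\in\R[\partial P]_{(1,+1)}$, and the pieces $M_i:=(\R[\partial P]/(\Theta))_{(i,-1)}$. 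By eq.~\eqref{h_-}, $\dim_\R M_i=\tfrac12\big(h_i(P)-\binom{d}{i}\big)$, so the hypothesis $g_r(P)=\binom{d}{r}-\binom{d}{r-1}$ is exactly $\dim_\R M_{r-1}=\dim_\R M_r$; since $\cdot\omega\colon M_{r-1}\to M_r$ is injective for $r\le d/2$ (the restriction to the anti-invariant part of toric hard Lefschetz, as in the proof of Theorem~\ref{cs-GLBT-ineq}), the hypothesis is equivalent to $\cdot\omega\colon M_{r-1}\to M_r$ being \emph{bijective}.

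\textbf{Step 1: propagate the equality.} The first task is to show that bijectivity at the single degree $r$ forces $M_i=\omega M_{i-1}$ for \emph{all} $i\ge r$, i.e.~that the anti-invariant part of $\R[\partial P]/(\Theta,\omega)$ vanishes in degrees $\ge r$. This is the cs counterpart of the classical fact that attaining the GLBT floor at one degree forces it at all larger degrees, which in the non-cs case comes from the $g$-vector being an $M$-sequence. Here one has toric hard Lefschetz for the whole ring $\R[\partial P]/(\Theta)$, hence a Lefschetz decomposition of the self-dual module $M$; the goal is to deduce that the Hilbert function of the anti-invariant part of $\R[\partial P]/(\Theta,\omega)$ — whose values are $\tfrac12\big(g_i(P)-g_i(\C^*_d)\big)$ in degrees $\le d/2$ — is an $O$-sequence, or at least cannot return to a nonzero value after hitting $0$. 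Establishing such a ``cs $M$-sequence'' statement, or circumventing it using the rigidity of $\C^*_d$, is itself one of the hurdles; for $r=2$ it is subsumed in \cite{KNNZ}.

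\textbf{Step 2: from the ring to stresses and faces.} Translate the vanishing of Step~1 into combinatorics via Lee's generalized stress spaces \cite{Lee94} — the mechanism driving Adiprasito's proof of Theorem~\ref{GLBT} \cite{Adiprasito-toric}. For generic data the cokernel $M_i/\omega M_{i-1}$ is canonically the space of \emph{anti-invariant $i$-stresses of $P$ modulo those supported on $\C^*_d$}; its vanishing for $i\ge r$ is the precise cs meaning of ``$P$ is $(r-1)$-stacked relative to a cross-polytope.'' Following the Alexander-duality bookkeeping of Murai--Nevo \cite{MuraiNevo2013}, carried out $\phi$-equivariantly, this should force $I_{\partial P}$ (or a $\phi$-equivariant generic initial ideal of it) to have no minimal generators in the relevant degrees, which combinatorially says: every set of at most $d-r+1$ vertices of $P$ that spans a face of the ``core'' of $P$ is already a face of $P$. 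The feature distinguishing this from the GLBT is that the obstruction does not vanish outright but only modulo the contribution of one cross-polytopal cell, which must then be located.

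\textbf{Step 3: build $\C$, prove uniqueness; the main obstacle.} Using the involution together with the degree-$(\le r-1)$ generators of $M$ (and, for $r=2$, Theorem~\ref{cs-LBT-equality}), pin down a distinguished cs set of $2d$ points of $P$ whose convex hull is a cross-polytope $\C^*_d\subset P$ with the origin in its interior; then triangulate the shell $P\setminus\Int(\C^*_d)$ antipodal pair by antipodal pair, as in the inductive description of symmetric stacking, obtaining the simplicial part of $\C$. The face bound from Steps~1--2 guarantees property~(iii), while (i) and (ii) are built in. Uniqueness follows because (i) forces $\C^*_d$ to be the unique non-simplex maximal cell, after which (iii) forces the simplicial part to be the unique triangulation of the shell whose $(d-r)$-skeleton lies in $\partial P\cup\partial\C^*_d$; any two such agree by a standard bistellar/shelling argument. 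The crux is Step~3 together with its algebraic input: already in the classical case, producing the triangulation of $P$ from $h_r=h_{r-1}$ is the hard theorem, and here it must be done $\phi$-equivariantly \emph{and} with a prescribed cross-polytopal cell in place of an all-simplicial triangulation — so a cs, ``relative to $\C^*_d$'' theory of generalized stress spaces (or of $\phi$-equivariant generic initial ideals and Green's crystallization) has to be developed. A secondary difficulty is the range $r=\lfloor d/2\rfloor$ with $d$ odd, where the Dehn--Sommerville reflection is off-center and the propagation of Step~1 needs adjustment; given how delicate the $r=2$ case already is in \cite{KNNZ}, it seems wise to settle $r=3$ first to calibrate the core-finding step.
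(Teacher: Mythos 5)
The statement you are attempting to prove is Conjecture~\ref{r-stackedness}, which the paper explicitly presents as \emph{open}: it is the conjectural $r>2$ analogue of Theorem~\ref{cs-LBT-equality}, proposed but not proved in \cite{KNNZ}. There is therefore no proof in the paper to compare against, and your write-up is itself candid that it offers a roadmap rather than a proof. The roadmap is reasonable and names the right tools, but the gaps you flag are not routine bookkeeping; they are the content of the conjecture.

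The first genuine gap is Step~1. In the non-cs case, the propagation ``$g_r=g_{r-1}\Rightarrow g_{r+1}=g_r$'' comes from $\R[\partial P]/(\Theta,\omega)$ being a standard graded Artinian $\R$-algebra, whose Hilbert function is a Macaulay $O$-sequence and hence cannot revive once it hits zero. The anti-invariant graded piece you are calling $M_i/\omega M_{i-1}=(\R[\partial P]/(\Theta,\omega))_{(i,-1)}$ is \emph{not} a quotient ring; it is only a module over the invariant subring $(\R[\partial P]/(\Theta,\omega))_{(\bullet,+1)}$, and there is no a priori bound on its minimal generating degrees. Hard Lefschetz for $\R[\partial P]/(\Theta)$ gives injectivity of $\cdot\omega$ in degrees $<d/2$, but surjectivity of $\cdot\omega$ from one degree does not bootstrap to the next without a ring structure on the cokernel. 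So a ``cs $M$-sequence'' is not a lemma waiting to be checked: it is essentially Conjecture~\ref{g_r}, which the paper states is wide open.

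The more serious gap is Step~3, for which you offer no mechanism. The Murai--Nevo and Adiprasito arguments for the equality case of Theorem~\ref{GLBT} detect an $(r-1)$-stacked \emph{triangulation} (an all-simplicial object) via vanishing of generalized stress spaces or of generic-initial-ideal generators. Conjecture~\ref{r-stackedness} instead asks for a polytopal complex one of whose cells is a full-dimensional cross-polytope, and that is a qualitatively different object: nothing in the stress/gin machinery as it stands singles out $2d$ vertices whose convex hull is a cross-polytopal cell, and you do not say how the ring data would locate them, why such a set exists, or why it is unique. Your uniqueness argument also assumes more than it proves: (i) only says that \emph{some} cell is a copy of $\C^*_d$, not that its location is determined, and ``a standard bistellar/shelling argument'' is not a known route to uniqueness of $(r-1)$-stacked cellulations (even in the all-simplicial case, uniqueness is a theorem, not folklore). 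This is precisely why the $r=2$ case (Theorem~\ref{cs-LBT-equality}) required a bespoke rigidity argument in \cite{KNNZ} — that all stresses must be symmetric — rather than being a corollary of the non-cs theory with an equivariance decoration. In short, you have accurately mapped the terrain and correctly marked where the known roads end, but the road ends are the open problem, and the proposal does not cross them.
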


We mention that \cite[Conjecture 8.6]{KNNZ} provides a more detailed version of the above conjecture and we refer our readers to Ziegler's book \cite[Section 8.1]{Ziegler} for the definition of a {\em polytopal} complex. Conjecture \ref{r-stackedness}, if true, would imply the following weaker statement that is also wide open at present. (The $r=2$ case does hold; this is immediate from Theorem \ref{cs-LBT-equality}.)

\begin{conjecture} \label{g_r}
Let $P$ be a  cs simplicial $d$-polytope. If $g_r(P) = g_r(\C^*_d)$ for some $3 \leq r \leq \lfloor d/2 \rfloor-1$, then $g_{r+1}(P) = g_{r+1}(\C^*_d)$. 
\end{conjecture}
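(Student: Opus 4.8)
The natural setting is the bigraded Stanley--Reisner ring already used for Theorem~\ref{cs-GLBT-ineq}. Realize $P$ with rational vertices and the origin in its interior, put $\Delta=\partial P$, and let $A:=\R[\Delta]/(\Theta)$, where $\Theta=\theta_1,\dots,\theta_d$ is the anti-invariant l.s.o.p.\ built from the coordinates of the (antipodal pairs of) vertices, so $\theta_j\in A_{(1,-1)}$ and $\omega:=\sum_{v\in V}x_v\in A_{(1,+1)}$. By eq.~\eqref{h_-}, $\dim_\R A_{(i,-1)}=\tfrac12\bigl(h_i(P)-\binom di\bigr)$, and hard Lefschetz for the toric variety $X_P$ makes $\cdot\omega\colon A_{(i-1,-1)}\to A_{(i,-1)}$ injective for $i\le d/2$. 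Since $r+1\le\lfloor d/2\rfloor$, it follows that $g_r(P)=g_r(\C^*_d)$ is equivalent to $A_{(r,-1)}=\omega\,A_{(r-1,-1)}$, and the desired conclusion is the same statement in degree $r+1$. Thus the conjecture reduces to the implication
\[
A_{(r,-1)}=\omega\,A_{(r-1,-1)}\ \Longrightarrow\ A_{(r+1,-1)}=\omega\,A_{(r,-1)}.
\]

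Because $A$ is standard graded, $A_{(r+1,-1)}=A_{(1,+1)}A_{(r,-1)}+A_{(1,-1)}A_{(r,+1)}$. The first summand is harmless: using the hypothesis, $A_{(1,+1)}A_{(r,-1)}=\omega\bigl(A_{(1,+1)}A_{(r-1,-1)}\bigr)\subseteq\omega\,A_{(r,-1)}$. The crux is to show $A_{(1,-1)}A_{(r,+1)}\subseteq\omega\,A_{(r,-1)}$, and here the argument that trivializes the non-cs statement ($g_r=0\Rightarrow g_{r+1}=0$, immediate from standard-gradedness) fails: the hypothesis controls only the anti-invariant part $A^-:=\bigoplus_i A_{(i,-1)}$ and says nothing directly about the much larger invariant piece $A_{(r,+1)}$. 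Hard Lefschetz together with the Dehn--Sommerville relations only forces the Hilbert function of $A^-$ to be symmetric and unimodal about $d/2$, and such a function can be flat in degrees $r-1,r$ yet jump in degree $r+1$; so the conjecture genuinely requires the geometry of $P$, not just the linear algebra of $A$.

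This is precisely what Conjecture~\ref{r-stackedness} should provide, and the plan is to deduce Conjecture~\ref{g_r} from it. Granting a cellulation $\C$ of $P$ with one cross-polytope cell, all other cells simplices in antipodal pairs, and every cell of dimension $\le d-r$ a face of $\partial P$, the complex $\C$ plays the role of a triangulation $B$ of $P$ with $\Skel_{d-r}(B)=\Skel_{d-r}(\partial P)$ in the non-cs GLBT. One then works with the relative Stanley--Reisner ring of the pair $(\C,\partial\C=\partial P)$, uses nonnegativity of its $h$-vector (Cohen--Macaulayness of the ball together with Gorenstein-type duality for the pair), and relates $h(\partial P)$ to the ``$h$-vector'' of $\C$; since $\C$ is assembled from a cross-polytope and antipodal simplex pairs, this differs from the data of $\C^*_d$ only in a controlled, symmetric fashion, which should yield $g_j(P)=g_j(\C^*_d)$ for all $r\le j\le\lfloor d/2\rfloor$ at once, in particular for $j=r+1$. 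That bookkeeping is routine once $\C$ is in hand; the real obstacle is \emph{producing} $\C$, i.e.\ proving Conjecture~\ref{r-stackedness}. A proof of the latter would seem to require a centrally symmetric, $\Z/2\Z$-equivariant counterpart of the Murai--Nevo machinery (Alexander duality, generic initial ideals, and Green's crystallization principle), which does not yet exist; already the $r=2$ case (Theorem~\ref{cs-LBT-equality}) needed substantial new ideas.

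A less ambitious alternative, worth recording, is to induct on $r$ with base case $r=2$ (immediate from Theorem~\ref{cs-LBT-equality}, since any polytope obtained from $\C^*_d$ by symmetric stacking has $g_j=g_j(\C^*_d)$ for all $2\le j\le\lfloor d/2\rfloor$, as each stacking step adds $(0,2,2,\dots,2,0)$ to the $h$-vector). Given $g_r(P)=g_r(\C^*_d)$, one would pass to vertex links $\lk_\Delta(v)$: these are simplicial $(d-2)$-spheres that are not centrally symmetric but occur in antipodal pairs $\lk_\Delta(v),\lk_\Delta(\phi(v))$ sharing no face, so the relevant framework is the theory of $h$-vectors of complexes with a fixed-point-free vertex involution (\cite{N05,BrowderN}), hoping the equality is inherited in one lower dimension and can be reassembled over all $v$. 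The difficulty with this route is that passing to links destroys the polytopality that underlies hard Lefschetz, so it would first demand a Lefschetz-type theorem valid for this weaker symmetry class — which is itself open.
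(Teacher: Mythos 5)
The statement you were asked about is Conjecture~\ref{g_r}, which the paper explicitly leaves open; the paper does not prove it, but only remarks (i)~that Conjecture~\ref{r-stackedness}, if established, would imply it, and (ii)~that the $r=2$ case follows from Theorem~\ref{cs-LBT-equality}. Your write-up correctly recognizes that no proof is available and is therefore an analysis rather than a proof, and in that sense it is faithful to the paper: you recover both of the paper's remarks (the reduction to Conjecture~\ref{r-stackedness}, including a sketch of the relative Stanley--Reisner bookkeeping that would extract the $g$-equalities from a cellulation, and the $r=2$ base case via symmetric stacking). Your added contribution is the clean algebraic reduction in the bigraded ring $A=\R[\partial P]/(\Theta)$: translating both hypothesis and conclusion, via hard Lefschetz and eq.~\eqref{h_-}, into surjectivity of $\cdot\omega$ on the anti-invariant pieces, decomposing $A_{(r+1,-1)}=A_{(1,+1)}A_{(r,-1)}+A_{(1,-1)}A_{(r,+1)}$ by standard-gradedness, disposing of the first summand using the hypothesis, and identifying $A_{(1,-1)}A_{(r,+1)}\subseteq\omega A_{(r,-1)}$ as the precise point where the analogue of the trivial non-cs argument ($g_r=0\Rightarrow g_{r+1}=0$) breaks down because the invariant piece $A_{(r,+1)}$ is uncontrolled. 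All of this is correct (including the observation that each symmetric stacking adds $(0,2,\dots,2,0)$ to $h$, hence fixes $g_j$ for $2\le j\le\lfloor d/2\rfloor$), and it is a genuinely useful sharpening of the paper's two-sentence treatment. Just be aware that this is not, and does not claim to be, a proof; the conjecture remains open, as you yourself state, pending either Conjecture~\ref{r-stackedness} or a Lefschetz-type result for the weaker involution symmetry you mention in your alternative route.
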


One consequence of Theorems \ref{cs-GLBT-ineq} and \ref{cs-LBT-equality}, along with the fact that the $f$-numbers of spheres are nonnegative linear combinations of the $g$-numbers, is that among all cs simplicial $d$-polytopes with $n$ vertices, a polytope obtained from $\C^*_d$ by symmetric stacking simultaneously minimizes all the face numbers; furthermore, if $d\geq 4$, then such polytopes are the only minimizers. 

While we will not discuss here the proof of Theorem \ref{cs-LBT-equality}, it is worth pointing out that it relies on the rigidity theory of frameworks (some ingredients of this theory are briefly outlined in the next section), and, in particular, on  (1) the theorem of Whiteley \cite{Whiteley-84} asserting that the graph of a simplicial $d$-polytope ($d\geq 3$) with its natural embedding in $\R^d$ is infinitesimally rigid, and on (2) the fact that if $P$ is a cs simplicial $d$-polytope with $g_2(P)= {d \choose 2}-d$, then {\em all} stresses on $P$ must be symmetric (i.e., assign the same weight to each edge and its antipode.) This latter observation follows from work of Stanley \cite{Stanley-87} and Lee \cite{Lee94}, and also from more recent work of Sanyal, Werner, and Ziegler \cite[Theorem 2.1]{Sanyal-et-al}.

To summarize: in contrast with the upper-bound type results, an analog of the GLBT for cs simplicial polytopes, at least its inequality part developed in Theorem \ref{cs-GLBT-ineq}) is a very well-understood part of the story. Furthermore, in the case of the LBT we even have a characterization of the minimizers (see Theorem \ref{cs-LBT-equality}). The part that is still missing is a characterization of cs simplicial $d$-polytopes with $g_r={d \choose r}-{d \choose r-1}$ for $3\leq r \leq \lfloor d/2 \rfloor$. Conjecture \ref{r-stackedness} proposes such a characterization.

\section{The lower bound conjecture for cs spheres and manifolds}
The final part of our story concerns a conjectural analog of the LBT for cs spheres (manifolds and even normal pseudomanifolds) --- a necessary step in our quest for a cs analog of the $g$-conjecture. To this end, it is worth recalling that in the world of simplicial complexes without a symmetry assumption, where at present the GLBT (Theorem \ref{GLBT}) is only known to hold for the class of simplicial polytopes, works of Walkup \cite{Walkup}, Barnette \cite{Barnette-LBT-pseudomanifolds}, Kalai \cite{Kalai87}, Fogelsanger \cite{Fogelsanger88}, and Tay \cite{Tay} show that the LBT (Theorem \ref{LBT}) holds much more generally:

\begin{theorem} \label{LBT-manifolds}
Let $\Delta$ be a simplicial complex of dimension $d-1\geq 3$. Assume further that $\Delta$ is a connected simplicial manifold (or even a normal pseudomanifold). Then $g_2(\Delta)\geq 0$, with equality if and only if $\Delta$ is the boundary complex of a stacked polytope.
\end{theorem}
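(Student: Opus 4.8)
The plan is to attack Theorem~\ref{LBT-manifolds} via the rigidity theory of frameworks, which is the approach that has proven successful for the non-symmetric version. First I would recall the dictionary between stresses and $h$-numbers: for a simplicial complex $\Delta$ of dimension $d-1$ realized generically in $\R^d$, the graph $G(\Delta)$ carries a rigidity matrix whose cokernel (the space of self-stresses) has dimension exactly $f_1(\Delta) - d\cdot f_0(\Delta) + \binom{d+1}{2} = g_2(\Delta)$, provided $G(\Delta)$ is generically rigid in $\R^d$ (equivalently, its rigidity matrix has full rank $d f_0(\Delta) - \binom{d+1}{2}$). Thus the inequality $g_2(\Delta)\ge 0$ is equivalent to the statement that $\dim(\text{stress space})\ge 0$, which is automatic, \emph{once one knows} that the rigidity matrix attains its maximum possible rank, i.e.\ that $G(\Delta)$ is generically $d$-rigid. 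So the heart of the matter is generic rigidity.

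The main step, then, is to prove that the graph of a connected simplicial manifold (or normal pseudomanifold) of dimension $d-1\ge 3$ is generically rigid in $\R^d$. Here I would invoke Fogelsanger's theorem on the rigidity of minimal cycle complexes: the $(d-1)$-skeleton of such a complex supports a nontrivial simplicial $(d-1)$-cycle (over $\Q$, say, using orientability locally and the pseudomanifold condition to glue), and Fogelsanger's inductive decomposition argument shows that the $1$-skeleton of the support of any minimal such cycle is generically $d$-rigid. The key structural input from the manifold/pseudomanifold hypothesis is that links of vertices are again (normal) pseudomanifolds of one lower dimension, so the induction in Fogelsanger's argument --- which repeatedly passes to vertex links and uses the cone lemma (coning a generically $(d-1)$-rigid graph yields a generically $d$-rigid graph) --- goes through; the case $d-1=3$ is the base of interest and the restriction $d-1\ge 3$ is exactly what makes vertex links at least $2$-spheres so that Gluck/Whiteley-type $3$-rigidity of planar triangulations kicks in.

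For the equality case $g_2(\Delta)=0$, I would argue that vanishing of the stress space together with generic $d$-rigidity forces $G(\Delta)$ to be a \emph{generically minimally rigid} (isostatic) graph, hence to have exactly $d f_0 - \binom{d+1}{2}$ edges. One then shows combinatorially, following Kalai's argument, that a normal pseudomanifold whose graph is minimally $d$-rigid must in fact be a stacked sphere: the minimality propagates to all vertex links (each link is itself a normal pseudomanifold of dimension $d-2$ with a minimally rigid graph), so by induction on $d$ every vertex link is a stacked $(d-2)$-sphere, which makes $\Delta$ a locally stacked manifold; an Euler-characteristic / $h_2 = h_1$ bookkeeping argument then upgrades "locally stacked" to "globally a stacked sphere" (ruling out nontrivial topology and the pseudomanifold singularities), and finally every stacked sphere is polytopal. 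I would phrase the induction base ($d=4$, links are $2$-spheres) carefully since that is where Barnette's and Walkup's original analyses live.

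The step I expect to be the main obstacle is establishing generic $d$-rigidity in full generality --- i.e.\ reproving or citing Fogelsanger's theorem with the pseudomanifold hypothesis rather than the manifold one, and making sure the inductive passage to links stays within the class of complexes to which the rigidity machinery applies (normal pseudomanifolds have connected links, which is precisely the condition Fogelsanger's decomposition needs). The equality analysis is technically delicate but follows a well-trodden path once rigidity is in hand; by contrast, the rigidity statement is the genuinely hard geometric input, and it is the reason the theorem, unlike the GLBT, is known beyond the polytopal setting. I would cite \cite{Fogelsanger88}, \cite{Tay}, \cite{Kalai87}, \cite{Barnette-LBT-pseudomanifolds}, and \cite{Walkup} for the respective ingredients rather than reconstructing them.
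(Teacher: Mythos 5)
Your overall plan---derive $g_2 \geq 0$ from generic $d$-rigidity of the $1$-skeleton, cite Kalai and Fogelsanger for the rigidity input, and handle the equality case by propagating minimality to vertex links in the style of Kalai, Walkup, Barnette and Tay---matches how the paper treats the theorem (it proves nothing itself and points to exactly those references, plus a sketch of Kalai's link-induction for manifolds in Section~8). At that level you are on the same page as the paper.

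However, your account of Fogelsanger's theorem contains a concrete error that would derail the argument if you tried to carry it out. You assert that the restriction $d-1 \geq 3$ ``makes vertex links at least $2$-spheres so that Gluck/Whiteley-type $3$-rigidity of planar triangulations kicks in,'' and that Fogelsanger's induction ``repeatedly passes to vertex links and uses the cone lemma.'' This conflates Kalai's argument with Fogelsanger's and gets the local geometry of pseudomanifolds wrong. In a normal pseudomanifold of dimension $d-1$, the link of a $(d-4)$-face (or, when $d-1=3$, of a vertex) is a $2$-dimensional normal pseudomanifold, which is an arbitrary closed triangulated surface---not a $2$-sphere---so Gluck's theorem does not supply the base case. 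This failure is precisely why Kalai's link-induction establishes the theorem for manifolds only, and why a genuinely new technique was needed for pseudomanifolds: Fogelsanger's engine is homological edge contraction (contract an edge of a minimal $(d-1)$-cycle, decompose the resulting chain into smaller minimal cycles, induct on the number of vertices), not iterated passage to links, and the generic $3$-rigidity of arbitrary triangulated surfaces is one of the \emph{conclusions} of his method rather than an input borrowed from Gluck. Relatedly, for a non-orientable normal pseudomanifold there is no fundamental cycle over $\Q$; the cycle must be taken over $\ZZ/2\ZZ$ (Fogelsanger's argument is coefficient-agnostic, so this is easy to repair, but ``using orientability locally and the pseudomanifold condition to glue'' is not a correct justification). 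Finally, in the equality case you should not wave past the step that $g_2(\Delta)=0$ forces $g_2(\lk_\Delta v)=0$ for every vertex $v$: this is not automatic and requires analyzing how stresses on $\Delta$ restrict to stars and links and then summing a suitable inequality over all vertices, which is where the real technical work in the treatments you cite lives.
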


In view of this result, we strongly suspect that Stanley's inequality on the $g_2$-number of cs polytopes (see Theorem \ref{cs-GLBT-ineq}) as well as the characterization of the minimizers given in Theorem \ref{cs-LBT-equality} continue to hold in the generality of cs simplicial spheres or perhaps even cs normal pseudomanifolds. This conjecture is however wide open at present.

\begin{conjecture} \label{g2_spheres}
Let $\Delta$ be a cs simplicial complex of dimension $d-1\geq 3$. Assume further that $\Delta$ is a simplicial sphere (or a connected simplicial manifold or even a normal pseudomanifold). Then $g_2(\Delta)\geq {d \choose 2}-d$. Furthermore, equality holds if and only if $\Delta$ is the boundary complex of a cs $d$-polytope obtained from the cross-polytope $\C^*_d$ by symmetric stacking.
\end{conjecture}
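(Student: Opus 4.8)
The plan is to carry out the polytopal argument of Section~7 (Theorems~\ref{cs-GLBT-ineq} and~\ref{cs-LBT-equality}), with the toric / hard-Lefschetz input replaced by a rigidity-theoretic one, in the same way that Theorem~\ref{LBT-manifolds} extends the polytopal LBT to manifolds and pseudomanifolds. For the sphere case one can first recast the inequality in terms of the fine $\N\times\Z/2\Z$-grading. Let $\Delta$ be a cs simplicial $(d-1)$-sphere, hence $\R$-CM, and pick an l.s.o.p.\ $\theta_1,\dots,\theta_d\in\R[\Delta]_{(1,-1)}$ as in Section~7, so that eq.~\eqref{h_-} gives $\dim_\R(\R[\Delta]/(\Theta))_{(i,-1)}=\tfrac12\big(h_i(\Delta)-\binom{d}{i}\big)$. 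Subtracting the $i=1$ identity from the $i=2$ identity yields
\[
g_2(\Delta)-g_2(\C^*_d)=2\Big(\dim_\R(\R[\Delta]/(\Theta))_{(2,-1)}-\dim_\R(\R[\Delta]/(\Theta))_{(1,-1)}\Big),
\]
so the inequality in Conjecture~\ref{g2_spheres} would follow from exhibiting an injection of the antisymmetric degree-one part into the antisymmetric degree-two part of $\R[\Delta]/(\Theta)$, with equality exactly when such a map is onto. (It is essential to work with the antisymmetric summand: the analogous statement for the symmetric parts gives only the strictly weaker estimate $h_2\ge 2m-\binom d2$ once $d\ge 4$.)

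The natural candidate for this injection is multiplication by a symmetric linear form $\omega\in\R[\Delta]_{(1,+1)}$, since such an $\omega$ maps degree $(i,-1)$ into degree $(i+1,-1)$. Thus the heart of the matter is a \emph{centrally symmetric weak Lefschetz property}: for a generic antisymmetric l.s.o.p.\ $\Theta$ and a generic symmetric $\omega$, the map
\[
\cdot\,\omega\colon(\R[\Delta]/(\Theta))_{(1,-1)}\longrightarrow(\R[\Delta]/(\Theta))_{(2,-1)}
\]
is injective. For a cs simplicial polytope this is exactly what the proof of Theorem~\ref{cs-GLBT-ineq} establishes, taking $\omega=\sum_v x_v$ and using hard Lefschetz for the toric variety $X_P$; the conjecture predicts that it persists for all cs spheres, manifolds, and normal pseudomanifolds. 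For the non-CM cases one cannot invoke eq.~\eqref{h_-}, and I would instead work directly with the self-stress space of a generic centrally symmetric framework of $\Skel_1(\Delta)$, on which the same $\Z/2\Z$-action induces a splitting into symmetric and antisymmetric stresses and the same dimension bookkeeping applies.

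Concretely, choose a generic \emph{centrally symmetric} placement of the vertices of $\Delta$ (antipodal vertices at antipodal points), let $\Theta$ be the associated geometric l.s.o.p.\ and $\omega$ a generic symmetric stress. The kernel of $\cdot\,\omega$ on $(\R[\Delta]/(\Theta))_1$ is controlled by the infinitesimal flexes of $\Skel_1(\Delta)$ at this placement, and the involution splits the flex space as $F=F^{+}\oplus F^{-}$, with the $\binom d2$-dimensional space of rotations inside $F^{+}$ and the $d$-dimensional space of translations inside $F^{-}$. A short rank count shows that $g_2(\Delta)\ge\binom d2-d$ is equivalent to $F^{-}$ being exactly the translations, i.e.\ to the assertion that a generic cs-framework of $\Skel_1(\Delta)$ has no nontrivial \emph{antisymmetric} infinitesimal flex. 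This is a centrally symmetric sharpening of the generic $d$-rigidity of $\Skel_1(\Delta)$ underlying Theorem~\ref{LBT-manifolds} (the rigidity of normal pseudomanifolds, due to Fogelsanger and, for manifolds, Kalai), and I would try to prove it by an equivariant version of Fogelsanger's inductive edge-contraction argument: contract a symmetric antipodal pair of edges at each stage, check that the antisymmetric flex space does not grow, and start the induction from $\partial\C^*_d$. If equality $g_2(\Delta)=\binom d2-d$ holds, the Lefschetz map above is onto, forcing every $2$-stress of $\Delta$ to be symmetric (compatibly with the Stanley--Lee and Sanyal--Werner--Ziegler picture behind Theorem~\ref{cs-LBT-equality}); feeding this into a cs version of Kalai's reconstruction argument for $g_2$-minimal complexes --- deleting a symmetric antipodal pair of stacked vertices and inducting on the number of vertices, with base case $\partial\C^*_d$ --- should identify $\Delta$ as a cs stacked sphere, which one then realizes, as in the non-cs case, as the boundary complex of a symmetric stacking of $\C^*_d$.

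The main obstacle is the rigidity step. Centrally symmetric placements form a proper subvariety of the space of all vertex placements, so the ordinary generic $d$-rigidity of $\Skel_1(\Delta)$ does not by itself guarantee that a cs-generic placement has no nontrivial antisymmetric flex; one genuinely needs to control the flex space along this special locus. Making Fogelsanger's inductive machinery respect the free involution --- so that it tracks the antisymmetric summand rather than the whole flex (equivalently, stress) space --- is precisely where the present techniques stop short. This is of a piece with the difficulty blocking the $g$-conjecture for ordinary spheres, so the centrally symmetric case should not be expected to be any easier.
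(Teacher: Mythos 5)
This is an open conjecture, and the paper does not prove it either; your outline is a strategy rather than a proof, and it coincides with the route the paper itself proposes in Section~8: pass to a centrally symmetric framework, split the flex and stress spaces under the involution, and reduce the inequality to a rigidity statement, with the chief obstacle (correctly identified by you) being that links of cs complexes are not cs, so the Kalai/Fogelsanger induction via cone and gluing lemmas does not directly respect the involution. One substantive, and potentially useful, difference: the paper's Conjecture~\ref{inf_rig_cs_spheres} asks for \emph{full} infinitesimal rigidity of some symmetry-respecting embedding, whereas your reduction only needs the \emph{antisymmetric} flex space $F^{-}$ to reduce to the translations; since rotations sit in $F^{+}$ and translations in $F^{-}$, this is a genuinely weaker hypothesis that nevertheless suffices by the rank count $\dim_\R\Stress^{-}=\tfrac12\bigl(g_2(\Delta)-\tbinom d2+d\bigr)+\bigl(\dim F^{-}-d\bigr)$, which forces $g_2\ge\tbinom d2-d$ once $\dim F^{-}=d$. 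One small inaccuracy: this is an implication, not the ``equivalence'' you assert --- $g_2\ge\tbinom d2-d$ does not by itself force $F^{-}$ to be trivial for a generic cs-placement, since both $\dim F^{-}$ and $\dim\Stress^{-}$ could be large simultaneously; but the direction you need is the one that holds, so the logic of the plan is unaffected. Your treatment of the equality case likewise tracks the paper's account of Theorem~\ref{cs-LBT-equality} (all stresses being symmetric, followed by a symmetric version of Kalai's reconstruction), and the passage from cs stacked sphere to boundary complex of a symmetric stacking of $\C^*_d$ would still need to be carried out. None of this closes the gap the paper identifies, which remains open.
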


In the rest of this short section we discuss one potential approach to attacking this conjecture: via the rigidity theory of frameworks. The (now wide) use of this theory in the study of face numbers of simplicial complexes was pioneered by Kalai in his celebrated proof of Theorem \ref{LBT-manifolds} for simplicial manifolds \cite{Kalai87}. Below we review several results and definitions pertaining to this fascinating theory. We refer our readers to Asimow and Roth \cite{AsimowRothI}, \cite{AsimowRothII} for a friendly introduction to this subject. 

Let $G = (V,E)$ be a finite graph.  A map $\p: V \rightarrow \mathbb{R}^d$ is called a \textit{$d$-embedding} of $G$ if $\aff\{\p(v) \ : \ v\in V\}=\R^d$. The graph $G$, together with a $d$-embedding $\p$, is called a $d$-\textit{framework}. An infinitesimal motion of a $d$-framework $(G,\p)$ is a map $\m: V(G)\to \R^d$ such that for any edge $\{u,v\}$ in $G$, 
\[\frac{d}{dt}\Bigr|_{t=0}\big\|(\p(u)+t\m(u))-(\p(v)+t\m(v))\big\|^2=0.\] An infinitesimal motion $\m$ of $(G,\p)$ is trivial if $\frac{d}{dt}\Bigr|_{t=0}\big\|(\p(u)+t\m(u))-(\p(v)+t\m(v))\big\|^2=0$ holds for every two vertices $u,v$ of $G$. (Trivial infinitesimal motions correspond to a start of an isometric motion of $\R^d$.) We say that a $d$-framework $(G, \p)$ is \emph{infinitesimally rigid} if every infinitesimal motion $\m$ of $(G,\p)$ is trivial. 


A \textit{stress} on a $d$-framework $(G,\p)$ is an assignment of weights $\omega = (\omega_e \ : \ e \in E(G))$ to the edges of $G$ such that for each vertex $v$ equilibrium holds: $$\sum_{u\ : \ \{u,v\} \in E(G)} \omega_{\{u,v\}}(\p(v)-\p(u)) = \mathbf{0}.$$ We denote the space of all stresses on $(G,\p)$ by $\Stress(G,\p)$. The stresses on $(G,\p)$ correspond to the elements in the kernel of the transpose of a certain $f_1(G) \times df_0(G)$ matrix $\Rig(G, \p)$ known as the \emph{rigidity matrix} of $(G, \p)$. 

The relevance of rigidity theory to the Lower Bound Theorem is explained by the following fundamental fact that is an easy consequence of the Implicit Function Theorem (see \cite{AsimowRothI} and \cite{AsimowRothII}).

\begin{theorem}  \label{basic-rig-prop} Let $(G, \p)$ be a $d$-framework. Then the following statements are equivalent:
\begin{itemize}
\item  $(G, \p)$ is infinitesimally rigid;
\item   $\rank \Rig(G,\p)=df_0(G)-{d+1 \choose 2}$;
\item $\dim_\R \Stress(G,\p) =f_1(G)-df_0(G)+{d+1 \choose 2}$.
\end{itemize}
\end{theorem}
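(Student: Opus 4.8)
The plan is to prove Theorem~\ref{basic-rig-prop} directly from the definitions, exhibiting it as essentially a restatement of a rank computation together with the identification of trivial infinitesimal motions with a space of the expected dimension. First I would make the linear algebra explicit: identify an infinitesimal motion $\m: V(G)\to\R^d$ with the vector in $\R^{df_0(G)}$ obtained by stacking the vectors $\m(v)$, and observe that, after differentiating, the condition $\frac{d}{dt}\big|_{t=0}\|(\p(u)+t\m(u))-(\p(v)+t\m(v))\|^2 = 2\langle \p(u)-\p(v),\, \m(u)-\m(v)\rangle = 0$ for each edge $\{u,v\}\in E(G)$ is precisely the statement that $\m$ lies in the kernel of $\Rig(G,\p)$, where the row of $\Rig(G,\p)$ indexed by $\{u,v\}$ has the block $\p(u)-\p(v)$ in the columns for $u$, the block $\p(v)-\p(u)$ in the columns for $v$, and zeros elsewhere. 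Dually, a stress is by definition an element $\omega\in\R^{f_1(G)}$ with $\Rig(G,\p)^{\mathsf T}\omega = \mathbf 0$, since the vertex-$v$ equilibrium equation $\sum_{u:\{u,v\}\in E}\omega_{\{u,v\}}(\p(v)-\p(u)) = \mathbf 0$ is exactly the block of $\Rig(G,\p)^{\mathsf T}\omega$ indexed by $v$. Thus $\Stress(G,\p) = \ker \Rig(G,\p)^{\mathsf T}$, and by rank-nullity $\dim_\R \Stress(G,\p) = f_1(G) - \rank\Rig(G,\p)$, which immediately gives the equivalence of the second and third bullet points.

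The remaining content is the equivalence of infinitesimal rigidity with the rank condition, and here the key step is to compute the dimension of the space of trivial infinitesimal motions. I would argue that since $(G,\p)$ is a $d$-framework we have $\aff\{\p(v):v\in V\} = \R^d$, so the vertices affinely span $\R^d$; a short computation shows that the trivial infinitesimal motions — those arising as $\frac{d}{dt}\big|_{t=0}$ of a smooth one-parameter family of isometries of $\R^d$ fixing $\p$ at $t=0$ — form a linear subspace of $\R^{df_0(G)}$ of dimension exactly $\binom{d+1}{2}$ (that is, $d$ from infinitesimal translations plus $\binom{d}{2}$ from infinitesimal rotations), and that this full dimension is achieved precisely because the points affinely span $\R^d$. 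These trivial motions are always genuine infinitesimal motions, so they sit inside $\ker\Rig(G,\p)$. Therefore $(G,\p)$ is infinitesimally rigid — meaning $\ker\Rig(G,\p)$ equals the space of trivial motions — if and only if $\dim_\R\ker\Rig(G,\p) = \binom{d+1}{2}$, equivalently $\rank\Rig(G,\p) = df_0(G) - \binom{d+1}{2}$. Combining this with the rank-nullity identity from the previous paragraph closes the cycle of equivalences.

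The main obstacle, and the only point requiring genuine care, is the claim that the space of trivial infinitesimal motions has dimension exactly $\binom{d+1}{2}$ under the affine-spanning hypothesis — both the upper bound (no more than translations and rotations can occur as a derivative of an isometry) and, crucially, the lower bound that all $\binom{d+1}{2}$ of them are linearly independent as maps on $V(G)$. The cleanest route is to note that an infinitesimal motion $\m$ is trivial, by definition, exactly when $\langle \p(u)-\p(v),\,\m(u)-\m(v)\rangle = 0$ for \emph{all} pairs $u,v$, which forces $\m(v) = Sv + t$ for a skew-symmetric matrix $S$ and a vector $t\in\R^d$ (this is the standard characterization of maps whose associated quadratic form vanishes on all difference vectors); the affine-spanning condition then guarantees that the map $(S,t)\mapsto (v\mapsto Sv+t)$ is injective on the $\binom{d+1}{2}$-dimensional space of such pairs, since $Sv+t = \mathbf 0$ for all $v\in V$ with $V$ affinely spanning forces $t = \mathbf 0$ and $S = 0$. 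This is where the Implicit Function Theorem input of \cite{AsimowRothI,AsimowRothII} is implicitly used to justify that the \emph{algebraically} trivial motions (those of the form $Sv+t$) coincide with the \emph{geometrically} trivial ones (derivatives of isometric motions of $\R^d$); I would cite those references for this identification rather than reprove it, and otherwise the argument is a direct, self-contained piece of linear algebra.
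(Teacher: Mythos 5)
Your argument is correct, and since the paper does not prove Theorem~\ref{basic-rig-prop} itself---it merely cites Asimow--Roth and labels it an ``easy consequence of the Implicit Function Theorem''---there is no internal proof to compare against; what you give is the standard linear-algebra proof that those references contain, and it is sound. The reduction of the equivalence of the second and third bullets to rank-nullity applied to $\Rig(G,\p)$ and its transpose is clean, and the reduction of the first bullet to the claim that the trivial motions form a $\binom{d+1}{2}$-dimensional subspace of $\ker\Rig(G,\p)$ is exactly the right mechanism.

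One point worth flagging, because it sharpens your own remark: with the paper's definitions, the Implicit Function Theorem plays \emph{no} role in this theorem at all. The paper defines $\m$ to be trivial by the purely algebraic condition $\langle \p(u)-\p(v),\,\m(u)-\m(v)\rangle=0$ for all pairs $u,v\in V$, so one never needs to pass through ``derivatives of smooth one-parameter families of isometries.'' The step you describe as relying on Asimow--Roth---that the algebraically trivial motions are exactly those of the form $v\mapsto S\p(v)+t$ with $S$ skew---is elementary: normalize by a translation so that some $\p(v_0)=\mathbf 0$ and $\m(v_0)=\mathbf 0$; expanding the all-pairs condition then gives $\langle\p(u),\m(v)\rangle=-\langle\p(v),\m(u)\rangle$ for all $u,v$, from which linearity of $\m$ in $\p$ and skew-symmetry of the resulting matrix both drop out once the $\p(v)$ linearly span $\R^d$. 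The genuine content of the Implicit Function Theorem in Asimow--Roth concerns the equivalence of infinitesimal rigidity with (generic) rigidity---a statement not being asserted here. A small cosmetic slip: you write $\m(v)=Sv+t$; it should read $\m(v)=S\p(v)+t$, since $v$ is an abstract vertex label, not a point of $\R^d$.
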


A graph $G$ is called {\em generically $d$-rigid} if there exists a $d$-embedding $\p$ of $G$ such that $(G, \p)$ is infinitesimally rigid; in this case, the set of infinitesimally rigid $d$-embeddings of $G$ is an open dense subset of the set of all $d$-embeddings. Recall that if $\Delta$ is a $(d-1)$-dimensional simplicial complex, then $g_2(\Delta)=h_2(\Delta)-h_1(\Delta)=f_1(\Delta)-df_0(\Delta)+{d+1 \choose 2}$. The last condition of Theorem \ref{basic-rig-prop} then implies that if the graph (i.e., $1$-skeleton) of $\Delta$ is generically $d$-rigid, then $g_2(\Delta)\geq 0$. 

Two basic but very useful results in rigidity theory are the gluing lemma (\cite[Theorem 2]{AsimowRothII} and \cite[Lemma 11.1.9]{Whiteley96}) and the cone lemma \cite{Whiteley83}. The gluing lemma asserts that if two graphs $G_1$ and $G_2$ are generically $d$-rigid and share at least $d$ vertices, then their union $G_1\cup G_2$ is also generically $d$-rigid, while the cone lemma posits that $G$ is generically $d$-rigid if and only if the graph of the cone over $G$ is generically $(d+1)$-rigid. Additionally, by Gluck's result \cite{Gluck75}, the graph of any simplicial $2$-sphere is generically $3$-rigid. Now, if $\Delta$ is a simplicial $(d-1)$-manifold (with $d\geq 4$) and $F$ is a $(d-4)$-face of $\Delta$, then the link of $F$ in $\Delta$ is a simplicial $2$-sphere, and hence has a generically $3$-rigid graph. This observation, along with the gluing and cone lemmas, allowed Kalai \cite{Kalai87} to prove by induction on $d\geq 4$ that the graph of any simplicial $(d-1)$-manifold is generically $d$-rigid, and thus to establish the inequality part of Theorem \ref{LBT-manifolds} for all simplicial manifolds.

The relationship of infinitesimal rigidity to the Stanley--Reisner ring was worked out in \cite{Lee94}. Specifically, let $\Delta$ be a $(d-1)$-dimensional simplicial complex with vertex set $V$ and let $\p: V\to\R^d$ be a $d$-embedding of the graph of $\Delta$, $G(\Delta)$. Define $d$ linear forms in $\R[x_v \ : \ v\in V]$ by $\theta_j:=\sum_{v\in V} \p(v)_j x_v$ for $j=1,\ldots,d$, where $\p(v)_j$ denotes the $j$-th coordinate of $\p(v)\in\R^d$, and let $\theta_{d+1}:=\sum_{v\in V} x_v$ (cf.~Section 7, and especially the sketch of the proof of Theorem \ref{GLBT}). Theorem 10 of \cite{Lee94} implies that if $(G(\Delta),\p)$ is infinitesimally rigid, then $\dim_\R (\R[\Delta]/(\theta_1,\ldots,\theta_d,\theta_{d+1}))_2=\dim_\R \Stress(G(\Delta),\p)=g_2(\Delta)$. Equivalently,  if $(G(\Delta),\p)$ is infinitesimally rigid, then $\theta_{i+1}: (\R[\Delta]/(\theta_1,\ldots,\theta_i))_1 \to (\R[\Delta]/(\theta_1,\ldots,\theta_i))_2$ is an injection for all $0\leq i\leq d$. 

Assume now that $\Delta$ is centrally symmetric. If there is a $d$-embedding $\p: V(\Delta)\to \R^d$ that respects symmetry and such that $(G(\Delta),\p)$ is infinitesimally rigid, then the previous paragraph along with the $\N\times\Z/2\Z$-grading of $\R[\Delta]$ and relevant computations of Section 7 (e.g., eq.~\eqref{h_-}) shows that $g_2(\Delta)\geq {d \choose 2} -d$. (This also follows from the argument in \cite[Section 2]{Sanyal-et-al}). In particular, the following conjecture, if true, would imply the inequality part of Conjecture \ref{g2_spheres}. 

\begin{conjecture} \label{inf_rig_cs_spheres}
Let $\Delta$ be a $(d-1)$-dimensional cs simplicial complex with an involution $\phi$. Assume further that $\Delta$ is a simplicial sphere (or a connected simplicial manifold or even a normal pseudomanifold) and that $d-1\geq 3$. Then there exists a $d$-embedding $\p: V(\Delta)\to \R^d$ that respects symmetry, i.e., $\p(\phi(v))=-\p(v)$ for each vertex $v$ of $\Delta$, and such that
$(G(\Delta),\p)$ is infinitesimally rigid.
\end{conjecture}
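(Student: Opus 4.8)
The plan is to adapt Kalai's link-and-cone proof of Theorem~\ref{LBT-manifolds} to the centrally symmetric setting. Recall first that, by Fogelsanger's theorem~\cite{Fogelsanger88} (Gluck's theorem~\cite{Gluck75} when $d=4$), the link of any vertex of $\Delta$ is a $(d-2)$-dimensional normal pseudomanifold with a generically $(d-1)$-rigid graph; the cone lemma~\cite{Whiteley83} then makes the graph of each star $\st_\Delta(v)=v\ast\lk_\Delta(v)$ generically $d$-rigid, and the gluing lemma (\cite[Theorem~2]{AsimowRothII}, \cite[Lemma~11.1.9]{Whiteley96}) applied repeatedly --- ordering the vertices so that each new star meets the union built so far in at least $d$ vertices, using that a normal pseudomanifold is strongly connected --- shows that $G(\Delta)=\bigcup_v G(\st_\Delta(v))$ is generically $d$-rigid. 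Hence the set of infinitesimally rigid $d$-embeddings of $G(\Delta)$ is open and dense among all $d$-embeddings; the content of Conjecture~\ref{inf_rig_cs_spheres} is precisely that this rigid locus meets the linear subspace of symmetric embeddings. Since $\rho\mapsto\rank\Rig(G(\Delta),\rho)$ is lower semicontinuous and $df_0-\binom{d+1}{2}$ is the largest value it can take on \emph{any} $d$-embedding, it suffices to exhibit a \emph{single} symmetric $\rho$ realizing that rank --- so one is free to build $\rho$ out of pieces placed in whatever special positions are convenient, provided the assembled $\rho$ is symmetric.

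The natural tool for the symmetric statement is the representation-theoretic block-decomposition of the rigidity matrix. Writing $f_0=2m$: for a symmetric embedding $\rho$, the $\Z/2\Z$-action makes $\Rig(G(\Delta),\rho)$ intertwine the involution on motions (whose $\pm 1$-eigenspaces, the symmetric and anti-symmetric motions, each have dimension $dm$) with the edge-swap involution on edge weights (whose $\pm 1$-eigenspaces each have dimension $\tfrac12 f_1$, since $\{v,\phi(v)\}$ is never an edge), so $\Rig$ splits as $\Rig_+\oplus\Rig_-$. A rotation about the origin is a symmetric motion and a translation is an anti-symmetric motion, so $(G(\Delta),\rho)$ is infinitesimally rigid if and only if $\rank\Rig_+=dm-\binom{d}{2}$ and $\rank\Rig_-=dm-d$; moreover full rank of the anti-symmetric block alone already forces $g_2(\Delta)\geq\binom{d}{2}-d$, the inequality of Conjecture~\ref{g2_spheres} (cf.~eq.~\eqref{h_-} and \cite[Section~2]{Sanyal-et-al}). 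The plan is then to run the link--cone--glue argument \emph{equivariantly}: process each vertex $v$ together with its antipode $\phi(v)$, realize $\st_\Delta(v)$ and $\st_\Delta(\phi(v))$ simultaneously inside one symmetric embedding chosen generically among symmetric embeddings, and glue antipodal pairs of stars one pair at a time, keeping both blocks at full rank throughout.

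The main obstacle is that this equivariant induction is not \emph{internal} to the centrally symmetric category, and, more seriously, that links can carry their own antipodal structure: $\lk_\Delta(v)$ is typically not $\phi$-invariant (one has $\phi(\lk_\Delta(v))=\lk_\Delta(\phi(v))$), and it may contain antipodal pairs of vertices --- in $\partial\C^*_d$, for instance, $\lk(e_1)=\partial\C^*_{d-1}$ contains \emph{all} of them. Consequently the restriction of a symmetric embedding of $\Delta$ to such a link is forced into a ``partially antipodal'' configuration that need not be generic for the link's graph, so one cannot simply quote generic rigidity of the link in the cone step. The technical heart of a proof is therefore to show that these partially antipodal restrictions still attain the maximal rigidity-matrix rank of the link's graph --- a delicate bookkeeping of which antipodal pairs occur in which links, carried out in tandem with the block-decomposition above. (One encouraging sign: the fully antipodal realization of the octahedron, namely $\partial\C^*_3$ with vertices $\pm e_i$, is infinitesimally rigid in $\R^3$.) An alternative, more hands-on route --- which succeeds for polytopal $\Delta$ by Whiteley's theorem~\cite{Whiteley-84}, and for $\C^*_d$ together with its symmetric stackings --- is to write down an explicit symmetric embedding (say, starting from $\pm e_1,\dots,\pm e_d$ and performing symmetric analogues of stellar subdivisions and vertex splits) and to verify its infinitesimal rigidity by a symmetric join/prism decomposition; but since non-polytopal cs simplicial spheres are abundant, rigidity-theoretic arguments of the above type seem unavoidable for the conjecture in its full generality.
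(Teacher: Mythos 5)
This statement is a \emph{conjecture} in the paper, not a theorem: no proof is offered, and the surrounding text explicitly flags it as open. So there is no ``paper's proof'' to compare against, and your submission is, by your own framing, a strategy outline rather than a proof. Read as such, your analysis is sound and in fact tracks the paper's own commentary closely. The paper says the conjecture ``appears to be hard in the general case'' because ``the links of cs complexes are usually not centrally symmetric, and so starting with cs $2$-spheres as the base case (and proceeding by induction via the cone and gluing lemmas) does not work.'' Your observation that $\lk_\Delta(v)$ is typically not $\phi$-invariant, and may contain antipodal pairs whose images under a symmetric embedding are forced into non-generic position, is exactly this obstruction made concrete. Your block-decomposition bookkeeping is also correct: with $f_0=2m$ and a symmetric $\rho$, the $\pm 1$-eigenspaces of the induced involutions on motions and on edge weights each have dimensions $dm$ and $\tfrac{1}{2}f_1$ respectively, rotations lie in the eigenspace with $\mu(\phi(v))=-\mu(v)$, translations in the other, and full rank of the anti-symmetric block alone gives $\tfrac{1}{2}f_1 \geq dm-d$, which rearranges to $g_2(\Delta)\geq\binom{d}{2}-d$.

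Two remarks. First, you correctly emphasize that density of the rigid locus in the space of \emph{all} $d$-embeddings gives nothing for free on the measure-zero linear subspace of symmetric embeddings --- this is precisely why the conjecture is not an immediate consequence of Fogelsanger/Kalai, and an argument that silently invokes genericity on that subspace would have a gap. Second, the paper proposes a specific way around the link obstruction that you may find useful to know: Conjecture 8.4 of \cite{KNNZ} is a rigidity statement about graphs of \emph{non-cs} simplicial $2$-spheres realized with certain prescribed antipodal constraints, designed to serve as the missing base case; if true it would imply Conjecture~\ref{inf_rig_cs_spheres}. That is essentially the ``technical heart'' you identify (partially antipodal restrictions still attaining maximal rigidity rank), isolated into a concrete statement about $2$-spheres. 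Your proposal is a reasonable and accurate map of the territory, but it is not a proof, and the conjecture remains open.
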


\noindent For instance, if $\Delta$ is the boundary complex of a cs simplicial $d$-polytope $P\subset \R^d$, then by Whiteley's theorem \cite{Whiteley-84}, the natural $d$-embedding of the graph of $P$ qualifies for $\phi$ as in Conjecture~\ref{inf_rig_cs_spheres}\normalsize. 

One of the reasons Conjecture  \ref{inf_rig_cs_spheres} appears to be hard in the general case is that the links of cs complexes are usually not centrally symmetric, and so starting with {\em cs} 2-spheres as the base case (and proceeding by induction via the cone and gluing lemmas) does not work. Conjecture 8.4 of \cite{KNNZ} proposes a statement about rigidity of graphs of (non cs) simplicial $2$-spheres that, if true, will provide an appropriate base case, and imply Conjecture \ref{inf_rig_cs_spheres}. In any case, as a step in our quest for a cs analog of the $g$-conjecture, it would be extremely interesting to shed any light on Conjectures \ref{g2_spheres} and \ref{inf_rig_cs_spheres} as well as to attempt to strengthen the inequality of Conjecture \ref{g2_spheres} in the spirit of results from \cite[Theorem 5.3(i)]{Murai-15} and \cite{MuraiNovik-fundam}. Such strengthened inequalities would provide lower bounds on $g_2$  of a cs simplicial manifold (or even a normal pseudomanifold) $\Delta$ in terms of the first homology or perhaps even in terms of the fundamental group of $\Delta$ and/or of the $\Z/2\Z$-quotient of $\Delta$.

\section{Concluding remarks}
The recent decades made the theory of face numbers into a very active and a rather large field. Consequently, there are quite a few topics we glossed over or omitted in this paper. Among them are the face numbers of general (not necessarily simplicial) cs polytopes. However, one of the conjectures about general cs polytopes we cannot avoid mentioning is Kalai's ``$3^d$-conjecture'' \cite[Conjecture A]{Kalai-89}: it posits that the total number of faces in a cs $d$-polytope (including the empty face but excluding the polytope itself) is at least $3^d$.  If the conjecture is true, there are multiple minimizers: the class of polytopes with exactly $3^d$ faces includes at least all the Hanner polytopes. (If Mahler's conjecture holds, then Hanner polytopes are the minimizers of Mahler volume among all the cs convex bodies.) At present the $3^d$-conjecture is known to hold for all cs simplicial polytopes (this is an immediate corollary of Theorem \ref{cs-CM}) and, by duality, all simple polytopes, as well as for all at most $4$-dimensional cs polytopes \cite{Sanyal-et-al}. The conjecture is wide open in all other cases.

We have also only barely touched on the face numbers of cs simplicial manifolds and instead concentrated on the face numbers of cs simplicial polytopes and spheres. Papers \cite{N05}, \cite[Section 4]{NSw-socle}, and \cite{KleeN}  contain some results pertaining to the Upper Bound Theorem for cs simplicial manifolds as well as to Sparla's conjecture \cite[Conjecture 4.12]{Sparla97}, \cite{Sparla98} on the Euler characteristic of even-dimensional cs simplicial manifolds. 

While we had to skip quite a few of the topics and could not possibly do justice to all of the existing methods, we hope we have conveyed at least some of the essence and beauty of this fascinating subject! We are very much looking forward to progress on the many existing as well as yet unstated problems about cs polytopes and simplicial complexes, and to new interactions between combinatorics, discrete geometry, commutative algebra, and geometric analysis that will lead to this progress!

\subsection*{Acknowledgments}
I am grateful to Steve Klee, Connor Sawaske, Hailun Zheng, and G\"unter Ziegler for numerous comments on the preliminary version of this paper.

{\small
\bibliography{refs}
\bibliographystyle{plain}
}
\end{document}